    \patchcmd{\section}{\scshape}{\large\bfseries}{}{}
    \renewcommand{\@secnumfont}{\bfseries}
\newtheorem{theorem}{Theorem}[section]
\newtheorem{lemma}[theorem]{Lemma}
\newtheorem{proposition}[theorem]{Proposition}
\newtheorem{corollary}[theorem]{Corollary}
\theoremstyle{definition}
\newtheorem{definition}[theorem]{Definition}
\newtheorem{example}[theorem]{Example}
\newtheorem{remark}[theorem]{Remark}
\patchcmd{\thebibliography}{\section*}{\paragraph}{}{}
\numberwithin{equation}{section}
\newcommand{\lb}{\langle \! \langle}
\newcommand{\rb}{\rangle \! \rangle}
\def\aa{\mathfrak{a}}
\def\bb{\mathfrak{b}}
\def\lll{{\sf mt}}
\def\rrr{{\sf r}}  
\def\epi{\twoheadrightarrow}
\title[Parafree algebras and Gr\"obner-Shirshov  bases]{Parafree augmented algebras and Gr\"obner-Shirshov bases for complete augmented algebras}
\author{Sergei O. Ivanov} 
\address{
Laboratory of Modern Algebra and Applications,  St. Petersburg State University, 14th Line, 29b,
Saint Petersburg, 199178 Russia}
\email{ivanov.s.o.1986@gmail.com}
\author{Viktor Lopatkin}
\address[Viktor Lopatkin]{Laboratory of Modern Algebra and Applications,  St. Petersburg State University, 14th Line, 29b,
Saint Petersburg, 199178 Russia}
\email{wickktor@gmail.com}
\begin{document}

\maketitle

\begin{abstract}
We develop a theory of parafree augmented algebras similar to the theory of parafree groups and explore some questions related to the Parafree Conjecture. We provide an example of a  finitely generated parafree augmented algebra of infinite cohomological dimension. Motivated by this example, we prove a version of the Composition-Diamond lemma for complete augmented algebras and give a sufficient condition for augmented algebra to be residually nilpotent on the language of its relations.
\end{abstract}

\section*{Introduction} For a group $G$ we denote by $(\gamma_i(G))_{i\geq 1}$ its lower central series.  A group $G$ is called {\it parafree} if it is residually nilpotent and there is a homomorphism from a free group $F\to G,$ such that $F/\gamma_i(F) \to G/\gamma_i(G)$ is an isomorphism for any $i.$ Baumslag introduced this class of groups \cite{Ba67a}, \cite{Ba67b}, \cite{Ba68}, \cite{Ba69} searching for an example of a non-free group of cohomological dimension $1.$ When Stallings and Swan proved their famous theorem about groups of cohomological dimension one \cite{St68},  \cite{Sw69}, Baumslag realised that non-free parafree groups have cohomological dimension at least two. Despite this fact, Baumslag continued the study of homological properties which  parafree groups  share with  free groups. Baumslag raised the following conjecture, which is known as the Parafree conjecture: {\it for any finitely generated parafree group $G,$ $H_2(G)=0.$} There is also a strong version: {\it for any finitely generated parafree group $G$, $H_2(G) = 0$ and the
cohomological dimension of $G$ is at most $2.$ } For discussion of these conjectures see \cite{Co87}, \cite{CO98}. For convenience, we formulate the following conjectures related to the Parafree Conjecture: if $G$ is a parafree group, then 
\begin{itemize}
    \item[] (C1)  $H_2(G) = 0;$
    \item[] (C2)  the cohomological dimension of $G$ is at most $2.$
\end{itemize}
Bousfield showed that $H_2(\hat F)\ne 0,$ where $\hat F$ is the pro-nilpotent completion of a free group with at least two generators \cite{Bou77} (see also \cite{IM18}, \cite{IM19}). This gives an uncountable counterexample to (C1). Recently a countable but non-finitely generated counterexample to (C1) was constructed by Zaikovski, Mikhailov and the first named author  \cite{IMZ}. However, (C1) is open for finitely generated parafree groups and this is the most interesting question in this theory. The second conjecture (C2) is still open both in the general case and in the finitely generated case.

It was decided to ask the same questions for other algebraic systems with the hope that it is easier to answer them. Recenty  Zaikovski, Mikhailov and the first named author  \cite{IMZ} investigated these two conjectures for the case of Lie algebras. They constructed non-finitely generated counterexamples to analogues of both conjectures (C1), (C2) in the case of Lie algebras. However, both conjectures are still open for finitely generated parafree Lie algebras. 

In this paper we develop a theory of parafree augmented algebras and construct a finitely generated parafree augmented algebra of infinite cohomological dimension. Namely, an algebra defined by four generators and four relations 
$$A=\Bbbk\langle x_1,x_2,y_1,y_2 \mid  r_1,r_2,r_3,r_4\rangle,$$ where
$$r_1=x_1x_2+y_1^2-y_1, \hspace{1cm} r_2=x_2x_1+y_2^2-y_2, $$
$$r_3=x_1y_2-y_1x_1, \hspace{1cm}  r_4=x_2y_1-y_2x_2,$$
is a parafree augmented algebra of infinite cohomological dimension. Here $x_1,x_2$ is a set of parafree generators in $A.$ 
This gives a counterexample for (C2) in the case of augmented algebras.
By the cohomological dimension we mean the infimum of such $n$ that $H^{n+1}(A,-)=0,$ where $H^{n+1}(A,-)={\sf Ext}^{n+1}_A(\Bbbk,-).$

It is also noted in \cite{IMZ} that the counterexample for (C1), in the case of Lie algebras, gives a counterexample for (C1) in the the case of augmented algebras. Then all known results about these conjectures can be listed in the following table.
\begin{center}
\begin{tabular}{|c|c|c|c|c|}
\hline
 & \ (C1) \ & \ (C2) \   & (C1) f.g. & (C2) f.g. \\    
\hline
\text{groups}& $\times$    & ? & ? & ? \\
\hline
\text{Lie. alg.} & $\times$ & $\times$ & ? & ? \\
\hline
\text{Aug. alg.} & $\times$ & $\otimes$ & ? &  $\otimes$  \\
\hline
\end{tabular}
\end{center}
Here we denote by ``(C1)'' and ``(C2)'' the two conjectures in the general case; by ``(C1) f.g.'' and ``(C2) f.g.'' these conjectures in the finitely generated case; by ``$\times$'' the cases where counterexamples were constructed; by ``?'' the cases, where conjecture is open; and by $\otimes$ the cases of that were closed in this paper.

Note that our example is the first finitely generated example for all these types of the Parafree Conjectures. However, we do not have a finitely generated counterexample to (C1) yet even in the case of augmented  algebras. 

In  order to prove that this algebra is parafree we need to prove that the algebra is residually nilpotent. This is the most complicated part of the statement. Motivated by this example we develop a theory of Gr\"obner-Shirshov bases for complete augmented algebras in a more general setting then it was done in \cite{GH98}. This theory helps us to provide a method of proving that an augmented algebra is residually nilpotent. Namely, we give a sufficient condition for augmented algebra to be residually nilpotent on the language of its relations. Let us explain it in more detail. 

We say that an augmented algebra $A$ is of {\it finite type} if $I(A)/I(A)^2$ is a finitely generated  $\Bbbk$-module. The completion of an augmented algebra $A$ is defined as the inverse limit $\hat A=\varprojlim A/I(A)^i.$ This notion is useful for us because an augmented algebra is residually nilpotent if and only if the natural map $A\to \hat A$ is injective. An augmented algebra is called {\it complete} if the natural map $A\to \hat A$ 
is an isomorphism. Similarly, to the case of groups \cite[\S 13]{Bou77} the completion of an augmented algebra $A$ is not always complete but it is complete if $A$ is of finite type  
(\Cref{th_completion_is_complete}). If $X$ is finite, then $\Bbbk \langle X \rangle$ is of finite type  and its completion is the algebra of non-commutative power series $\Bbbk \lb X \rb.$ We prove that an algebra of finite type $A$ is complete if and only if it can be presented as a quotient of the algebra of non-commutative power series $\Bbbk \lb X \rb$ by a closed ideal $\aa\subseteq I(\Bbbk \lb X \rb)$ 
$$A\cong \Bbbk \lb X \rb/\aa,$$
where $X$ is finite. 

In order to work with the algebras of the form $\Bbbk \lb X \rb/\aa,$ it is useful to have a theory of Gr\"obner-Shirshov bases for this case which is similar to the theory for associative algebras (see \cite{BC14}, \cite{Mo94}). Namely, we need an analogue of the Composition-Diamond lemma. Such a theory was developed by Gerritzen and Holtkamp \cite{GH98}. The main difference with the theory for associative algebras is that one needs to replace the maximal term of a polynomial ${\sf MT}(p)$ by the {\it minimal} term of a power series ${\sf mt}(f)$.  However, Gerritzen and Holtkamp consider only the deg-lex order on the free monoid $W(X)$ but we need more general admissible orders. Gerritzen and Holtkamp use the notion of {\it $I$-adic basis} in their theory. We had to replace this notion to a more general notion of {\it topological basis} in order to prove the more general variant of 
the Composition-Diamond lemma (\Cref{CDA}).  \Cref{prop_I-adic} shows a connection between the notions of $I$-adic basis and topological basis.
We also want to mention a thesis of Hellstr\"om  \cite{H02} devoted to a similar theory but our approach is independent. A similar theory for commutative power series was developed by Hironaka in his famous article about resolution of singularities \cite{Hir}.

We use our variant of the Composition-Diamond lemma to prove a theorem that gives a sufficient condition for residually nilpotency of an augmented algebra on the language of its relations  (\Cref{the_main_theorem}). In statement of the theorem we use two types of Gr\"obner-Shirshov bases: the classical Gr\"obner-Shirshov basis for associative algebras and our variant for complete augmented algebras. We also use the notion of an $\mathbb N$-order: a total order such that any set bounded above is finite. Namely, we prove the following theorem.

\medskip

{\bf Theorem.} {\it Let $X$ be a finite set,  $\leq$ be an admissible order on  $W(X)$ and $\preccurlyeq$ is an admissible $\mathbb N$-order on $W(X).$ Assume that $R\subset I(\Bbbk \langle X \rangle )$ is a Gr\"obner-Shirshov basis 
 in $\Bbbk \langle X \rangle $ with respect to $\leq$ and $S\subset I(  \Bbbk \lb X \rb)$ is a Gr\"obner-Shirshov basis in $ \Bbbk \lb X \rb$ with respect to $\preccurlyeq$ such that 
 \begin{itemize}
     \item $\overline{ (\iota(R))}= \overline{(S)};$
     \item ${\sf MT}_{\leq }(R)={\sf mt}_{\preccurlyeq}(S).$
 \end{itemize}
 Then $\Bbbk \langle X \rangle /(R)$ is residually nilpotent. }

\section*{Acknowledgments}
The work on sections 1-3 was done by the first named author and it was supported by: (1) Ministry of Science and Higher Education of the Russian Federation, agreement  075-15-2019-1619; (2) the grant of the Government of the Russian Federation for the state
support of scientific research carried out under the supervision of leading scientists,
agreement 14.W03.31.0030 dated 15.02.2018; (3)  RFBR according to the research project 20-01-00030; (4) Russian Federation President Grant for Support of Young Scientists MK-681.2020.1.

The work on sections 4-6, including the main theorem, was done by the second named author and it was  supported by the Russian Science Foundation research project number 19-71-10016.

\section{Complete augmented algebras}

\subsection{Preliminaries}
Throughout the paper we denote by $\Bbbk$ a commutative ring and by $\otimes$ the tensor product over $\Bbbk$. By an algebra we always mean a $\Bbbk$-algebra.

Let $A$ be an algebra and  $\mathfrak{a}$ be an ideal of $A$. 
The {\it $\aa$-adic topology} on $A$ is the topology 
defined by taking the ideals $\mathfrak{a}^n$ as basic
neighborhoods of $0$. For an ideal $\bb \triangleleft A$ we denote by $\overline{\bb}$ its closure in this topology.  It is well-known that the closure can be computed as follows  
$\overline{\bb}= \bigcap_{n=1}^\infty   (\bb+\aa^n)$ (see \cite[Ch.III,\S 2.5]{Bour72}). In particular $\bb$ 
is closed if and only if $\bb= \bigcap_{n=1}^\infty   (\bb+\aa^n).$ The $\aa$-adic topology is 
Hausdorff if and only if the ideal $(0)$ is closed i.e.  $\cap_{n=1}^\infty \mathfrak{a}^n = (0)$. 

An {\it augmented algebra} is an algebra $A$ together with an algebra homomorphism $\varepsilon_A: A\to \Bbbk.$ This homomorphism is called augmentation and its kernel is called the augmentation ideal
$I(A)={\rm Ker}(\varepsilon_A:A\to \Bbbk).$
An augmented algebra will be always considered together with the $I(A)$-adic topology. Whenever this can be done without ambiguity we shall use the notation $I$ instead of $I(A)$.

A {\it morphism of augmented algebras} is a homomorphism $f:A\to B$ such that $\varepsilon_A=\varepsilon_B f.$ Note that for any morphism of augmented algerbas $f:A\to B$ we have $f(I(A)^n)\subseteq I(B)^n.$  In particular, $f$ is continuous.

An augmented algebra $A$ is called {\it nilpotent} if $I(A)^n=0$ for some $n,$ and {\it residually nilpotent} if $\bigcap_{n=1}^\infty I(A)^n=0.$ Note that an augmented algebra $A$ is residually nilpotent if and only if it is Hausdoff with respect to $I(A)$-adic topology.

For an augmented algebra $A$ we  define its {\it associated graded algebra} ${\sf Gr}(A)$ as follows
$${\sf Gr}(A)=\bigoplus_{n\geq 0} {\sf Gr}_n(A), \hspace{1cm}   {\sf Gr}_n(A)=I(A)^n/I(A)^{n+1}$$
with the multiplication induced by the multiplication in $A$.

\subsection{The algebra of non-commutative power series}
Let $X=\{x_1,\dots,x_N\}$ be a finite set and $W(X)$ be the free monoid generated by $X.$ Elements of $W(X)$ are called {\it words} in the alphabet $X$. 
We denote by $W_n(X), W_{<n}(X),W_{\geq n}(X)$ the sets of words of length $n,$ of length lesser than $n,$ and of length at least $n$ respectively. If $X$ is obvious, we simplify notation $W:=W(X).$

The free algebra $\Bbbk \langle X \rangle$ consists of linear combinations $\sum_{w\in W} \alpha_w w.$ Here we assume that this linear combination is finite i.e., the set $\{ w\in W\mid \alpha_w\ne 0\}$ is finite. We treat $\Bbbk\langle X \rangle$ as an augmented algebra with the augmentation $\varepsilon:  \Bbbk\langle X \rangle \to \Bbbk$ such that $ \varepsilon(x)=0$ for any $x\in X.$ 

Note that $\Bbbk\langle X \rangle$ is a free augmented algebra in the following sence. For any augmented algebra $A$  and a map $X\to I(A)$  there exists a unique extension of this map to a morphism of augmented algebras $\Bbbk \langle X \rangle \to A.$ 

It is easy to see that for any $n\geq 1$ the ideal  $I( \Bbbk\langle X \rangle)^n$  consists of all finite sums of the form $\sum_{w\in W_{\geq n}} \alpha_w w.$

Denote by $\Bbbk\langle\!\langle X \rangle\!\rangle$ the algebra of non-commutative power series. Its elements are (not necessarily finite) formal sums $\sum_{w\in W(X)} \alpha_w w$. The product in this algebra  is given by 
$$\left(\sum_{w\in W} \alpha_w w\right)\left(\sum_{w\in W} \beta_w w\right)=\sum_{w\in W} \left(\sum_{\{(u,v)\in W^2\mid uv=w\} }\alpha_u\beta_v \right) w.$$

It is well defined, because, for any $w\in W$, the number of decompositions $w=uv$ is finite.  We also treat this algebra as an augmented algebra with the augmentation   $\varepsilon(\sum \alpha_w w )=\alpha_1.$ 

For any $n\ge 1$ we consider the following morphism of augmented algebras 
$$p_n:\Bbbk \langle\!\langle X \rangle \! \rangle \longrightarrow  \Bbbk \langle X \rangle/I^n,$$
defined by the formula $\sum \alpha_ww \mapsto \sum_{w\in W_{<n}(X)} \alpha_ww.$ 

\begin{lemma}\label{lemma_power_ideal}
Let $X=\{x_1,\dots,x_N\}$ be a finite set and $n\geq 1$. Then
$I(\Bbbk \langle\!\langle X \rangle \! \rangle )^n= {\rm Ker}(p_n).$ In other words, $p_n$ induces an isomorphism
$$\Bbbk\langle\!\langle X \rangle\!\rangle/I^n \cong \Bbbk \langle X \rangle/I^n.$$
\end{lemma}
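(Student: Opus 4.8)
The plan is to compute ${\rm Ker}(p_n)$ explicitly and then identify it with $I(\Bbbk\lb X\rb)^n$ by two inclusions, with finiteness of $X$ entering only in one of them.

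First I would record what ${\rm Ker}(p_n)$ is. Since $I(\Bbbk\langle X\rangle)^n$ consists of the finite sums $\sum_{w\in W_{\geq n}(X)}\alpha_w w$ (as observed just before the lemma), the quotient $\Bbbk\langle X\rangle/I(\Bbbk\langle X\rangle)^n$ is the free $\Bbbk$-module on the images of the words in $W_{<n}(X)$, and under this identification $p_n$ becomes the truncation $\sum_{w}\alpha_w w\mapsto \sum_{w\in W_{<n}(X)}\alpha_w w$. Hence ${\rm Ker}(p_n)=J$, where $J\subseteq\Bbbk\lb X\rb$ denotes the $\Bbbk$-submodule of all power series supported on $W_{\geq n}(X)$. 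So it suffices to prove $I(\Bbbk\lb X\rb)^n=J$.

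For brevity write $I=I(\Bbbk\lb X\rb)$ for the rest of the sketch. The inclusion $I^n\subseteq J$ is the easy direction: $I^n$ is spanned as a $\Bbbk$-module by the products $a_1\cdots a_n$ with each $a_i\in I$, i.e. each $a_i$ supported on $W_{\geq 1}(X)$; by the product formula in $\Bbbk\lb X\rb$, every word occurring in such a product is a concatenation $w_1\cdots w_n$ with all $|w_i|\geq 1$, hence of length $\geq n$, so $a_1\cdots a_n\in J$; as $J$ is a submodule, $I^n\subseteq J$ follows. For the reverse inclusion $J\subseteq I^n$ — the place where one uses that $X$ is finite — I would take $f=\sum_w\alpha_w w\in J$ and group its terms by their length-$(n-1)$ prefix:
$$f=\sum_{v\in W_{n-1}(X)} v\,g_v,\qquad g_v:=\sum_{|w'|\geq 1}\alpha_{vw'}\,w'\in I,$$
using that every word $w$ with $|w|\geq n$ factors uniquely as $w=vw'$ with $|v|=n-1$ and $|w'|\geq 1$. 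Since $X$ is finite, $W_{n-1}(X)$ is finite, so this sum is finite; and writing $v=x_{i_1}\cdots x_{i_{n-1}}$ exhibits $v\,g_v=x_{i_1}\cdots x_{i_{n-1}}\,g_v$ as a product of $n$ elements of $I$. Hence $f\in I^n$, and $I^n=J={\rm Ker}(p_n)$.

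Finally, $p_n$ is surjective — every class in $\Bbbk\langle X\rangle/I(\Bbbk\langle X\rangle)^n$ is represented by a finite sum $\sum_{|w|<n}\alpha_w w$, which equals $p_n$ of the power series with the same coefficients — and it is a morphism of augmented algebras, so the first isomorphism theorem yields $\Bbbk\lb X\rb/I(\Bbbk\lb X\rb)^n\ \xrightarrow{\ \sim\ }\ \Bbbk\langle X\rangle/I(\Bbbk\langle X\rangle)^n$, the restatement in the lemma. The one obstacle worth flagging is the inclusion $J\subseteq I^n$: the decomposition $f=\sum_v v\,g_v$ must be a finite sum, which is exactly why $X$ is assumed finite — for infinite $X$ one has $I^n\subsetneq J$ in general.
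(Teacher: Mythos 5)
Your proof is correct and follows essentially the same route as the paper's: the easy inclusion $I^n\subseteq\mathrm{Ker}(p_n)$, and for the converse a grouping of a series supported on $W_{\geq n}(X)$ by a fixed-length prefix, with finiteness of $X$ guaranteeing the outer sum is finite. The only cosmetic difference is that you split off a length-$(n-1)$ prefix with tail $g_v\in I$, while the paper splits off a length-$n$ prefix with an arbitrary power-series tail; both yield $f\in I^n$ in the same way.
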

\begin{proof}
If we set $J_n={\rm Ker}(p_n),$ then we have $J_n\cdot J_m\subseteq J_{n+m}$, and $J_1=I.$ It follows that $I^n\subseteq J_n.$ Take an element $a\in J_n$, and let $a=\sum_{w\in W_{\geq n}} \alpha_ww$.  It is clear that for any $n\ge 1$, $w \in W_{\ge n}$ we have $w=x_{i_1}x_{i_2}\dots x_{i_n}w',$ where $i_1,\dots,i_n\in \{1,\dots,N\}$. Since $\{1,\dots,N \}^n$ is finite, we can present $a$ as a finite sum
$$a=\sum_{(i_1,\dots,i_n)\in \{1,\dots,N\}^n} x_{i_1}\dots x_{i_n}\cdot a_{i_1,\dots,i_n},$$
where $a_{i_1,\dots,i_n}\in \Bbbk\langle\!\langle X \rangle\!\rangle.$ Obviously $x_{i_1}\dots x_{i_n}\cdot a_{i_1,\dots,i_n}\in I^n.$ Hence $a\in I^n.$ Therefore $I^n=J_n.$
\end{proof}

\begin{remark}
If $X=\{x_1, x_2, \dots\}$ is infinite, one can also define $\Bbbk \langle\!\langle X \rangle \! \rangle $ but  \Cref{lemma_power_ideal} fails because of $x_2^2+x_3^3+ x_4^4+\dots\notin I(\Bbbk \langle\!\langle X \rangle \! \rangle )^2.$  
\end{remark}

\subsection{Completion of augmented algebras} 
Let $A$ be an augmented algebra. Its completion is defined as the inverse limit
$$ \hat A= \varprojlim A/I(A)^n.$$
The completion $\hat A$ is also an augmented algebra with the augmentation induced by the augmentation of $A.$ It is easy to see that $\Bbbk\langle\!\langle X \rangle \!\rangle$ is the completion of $ \Bbbk\langle X \rangle.$ 

\begin{lemma}\label{lemma_completion_is_res_nil}
For any augmented algebra $A$ the completion $\hat A$ is residually nilpotent. 
\end{lemma}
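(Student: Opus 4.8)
The plan is to unwind the definitions and show that the augmentation ideal of $\hat A$ is, up to the canonical identifications, the inverse limit of the ideals $I(A)^n/I(A)^m$, so that an element of $\bigcap_{n\ge 1} I(\hat A)^n$ must be zero in every $A/I(A)^m$ and hence zero in $\hat A$. First I would set $I = I(A)$ and recall that the augmentation on $\hat A = \varprojlim A/I^m$ is induced coordinatewise from the augmentations on $A/I^m$, so that $I(\hat A) = \varprojlim I/I^m$ as a submodule of $\varprojlim A/I^m$; concretely, an element of $\hat A$ is a compatible sequence $(a_m + I^m)_m$, and it lies in $I(\hat A)$ exactly when $a_m \in I$ for all $m$ (equivalently $a_1 \equiv 0$).

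The key step is to identify $I(\hat A)^n$. I would first check the inclusion $I(\hat A)^n \subseteq \varprojlim I^n/I^m$: since the projections $\hat A \to A/I^m$ are morphisms of augmented algebras, they send $I(\hat A)^n$ into $I(A/I^m)^n = (I^n + I^m)/I^m$, which equals $I^n/I^m$ once $m \ge n$. For the reverse inclusion — this is the point that needs a small argument rather than pure formalism — given a compatible sequence $(b_m)_m$ with $b_m \in I^n$ for all $m$, I want to exhibit it as a finite sum of $n$-fold products of elements of $I(\hat A)$. Here one can argue degree by degree, or more cleanly invoke the same trick as in the proof of \Cref{lemma_power_ideal}: writing $I^n$ as a sum of products and using that $A/I^m$ is, in the relevant range, a quotient of a finitely generated free algebra, one produces a bounded-length expression; alternatively, and more simply, it suffices to observe that for the conclusion we only need $\bigcap_n I(\hat A)^n \subseteq \bigcap_n \varprojlim I^n/I^m$, and the inclusion $I(\hat A)^n \subseteq \varprojlim I^n/I^m$ already suffices, so the reverse inclusion can be skipped entirely.

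Granting $I(\hat A)^n \subseteq \varprojlim_{m} I^n/I^m \cong \varprojlim_{m\ge n} I^n/I^m$, I would then take any $\xi \in \bigcap_{n\ge 1} I(\hat A)^n$, represented by a compatible sequence $(a_m + I^m)_m$. For each fixed $m$, choosing $n = m$ gives $a_m + I^m \in I^m/I^m = 0$, so $a_m \in I^m$; hence every coordinate of $\xi$ vanishes and $\xi = 0$ in $\hat A$. Therefore $\bigcap_{n\ge 1} I(\hat A)^n = 0$, i.e. $\hat A$ is residually nilpotent. The only part requiring care is the first inclusion of the key step, and as noted that follows immediately from the fact — already recorded in the excerpt — that morphisms of augmented algebras carry $I(-)^n$ into $I(-)^n$, applied to the canonical projections $\hat A \to A/I^m$.
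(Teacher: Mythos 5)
Your proof is correct and takes essentially the same route as the paper: both arguments come down to the observation that $I(\hat A)^n$ is killed by the projection $\hat A\to A/I(A)^n$ (you obtain this from the fact that morphisms of augmented algebras carry $I(-)^n$ into $I(-)^n$, while the paper phrases it as $J_nJ_m\subseteq J_{n+m}$ with $J_1=I(\hat A)$ for $J_n={\rm Ker}(\hat A\to A/I(A)^n)$), combined with $\bigcap_n J_n=0$. You also rightly note that the reverse inclusion $\varprojlim_m I^n/I^m\subseteq I(\hat A)^n$ is not needed, so no gap arises from skipping it.
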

\begin{proof}
 Set $J_n={\sf Ker}(\hat A\to A/I(A)^n ).$ Then $\bigcap J_n=0$ and $J_n\cdot J_m \subseteq J_{n+m}$ for any $n,m.$ Since $I(\hat A)=J_1,$ we obtain $I(A)^n\subseteq J_n.$ Hence $\bigcap I(\hat A)^n=0.$ 
\end{proof}

\begin{lemma}\label{lemma_surjections}
Let $f:A\to B$ be a morphism of augmented algebras. Then the following statements are equivalent:
\begin{enumerate}
\item $\hat f:\hat A \to \hat B$ is surjective;
\item $I(A)/I(A)^2\to I(B)/I(B)^2$ is surjective;
\item ${\sf Gr}(A)\to {\sf Gr}(B)$ is surjective;
\item  $A/I(A)^n\to B/I(B)^n$ is surjective for any $n.$
\end{enumerate}
\end{lemma}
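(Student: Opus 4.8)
The plan is to prove the cyclic chain $(2)\Rightarrow(3)\Rightarrow(1)\Rightarrow(4)\Rightarrow(2)$; all four steps are short. For $(2)\Rightarrow(3)$ I would first note that ${\sf Gr}(A)$ is generated in degree one over ${\sf Gr}_0(A)=A/I(A)\cong\Bbbk$, because the multiplication map ${\sf Gr}_1(A)^{\otimes n}\to{\sf Gr}_n(A)$ is surjective (by definition $I(A)^n$ is spanned, as a $\Bbbk$-module, by $n$-fold products of elements of $I(A)$). A morphism of augmented algebras $f$ induces a graded algebra homomorphism ${\sf Gr}(f)\colon{\sf Gr}(A)\to{\sf Gr}(B)$, whose image is a graded subalgebra of ${\sf Gr}(B)$ containing ${\sf Gr}_0(B)=\Bbbk$ and, by $(2)$, all of ${\sf Gr}_1(B)=I(B)/I(B)^2$; hence the image is all of ${\sf Gr}(B)$.

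For $(3)\Rightarrow(1)$, write $f_n\colon A/I(A)^n\to B/I(B)^n$ for the induced maps, so $\hat f=\varprojlim f_n$. Given $\hat b=(b_n)_n\in\hat B$, I would build a preimage $\hat a=(a_n)_n$ inductively: set $a_1=b_1\in\Bbbk$, and given a compatible $a_n$ with $f_n(a_n)=b_n$, pick any lift $\tilde a\in A/I(A)^{n+1}$ of $a_n$. Since $b_{n+1}$ and $f_{n+1}(\tilde a)$ both map to $b_n$ in $B/I(B)^n$, their difference lies in $I(B)^n/I(B)^{n+1}={\sf Gr}_n(B)$, so by $(3)$ there is $c\in{\sf Gr}_n(A)=I(A)^n/I(A)^{n+1}$ with ${\sf Gr}_n(f)(c)=b_{n+1}-f_{n+1}(\tilde a)$. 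Regarding $c$ as an element of $A/I(A)^{n+1}$ and putting $a_{n+1}=\tilde a+c$ gives a lift of $a_n$ (since $c\equiv 0$ modulo $I(A)^n$) with $f_{n+1}(a_{n+1})=b_{n+1}$; the family $(a_n)$ is the required preimage, so $\hat f$ is surjective.

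For $(1)\Rightarrow(4)$: the canonical projection $\hat B\to B/I(B)^n$ is surjective because the transition maps $B/I(B)^{n+1}\to B/I(B)^n$ are, so $\hat A\xrightarrow{\hat f}\hat B\to B/I(B)^n$ is surjective; but this composite equals $\hat A\to A/I(A)^n\xrightarrow{f_n}B/I(B)^n$, forcing $f_n$ to be surjective. For $(4)\Rightarrow(2)$ I would use the case $n=2$: in the commutative ladder with exact rows $0\to I(A)/I(A)^2\to A/I(A)^2\to\Bbbk\to 0$ and $0\to I(B)/I(B)^2\to B/I(B)^2\to\Bbbk\to 0$, the right-hand vertical map is the identity and the middle one is surjective by $(4)$, so the snake lemma gives surjectivity of $I(A)/I(A)^2\to I(B)/I(B)^2$. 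The only non-formal point is $(2)\Rightarrow(3)$ (generation of ${\sf Gr}(A)$ in degree one); the two pitfalls I would watch for are that $(4)\Rightarrow(2)$ genuinely needs the exact sequence terminating in $\Bbbk$ — the analogous chase fails for general $n$, since there the rightmost map need not be injective — and that an inverse limit of surjections need not be surjective, which is exactly why I reach $(1)$ from $(3)$ by the explicit step-by-step lifting rather than directly from $(4)$.
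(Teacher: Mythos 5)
Your proposal is correct, and all four of its steps match the paper's ideas; the only real divergence is organizational. For $(2)\Rightarrow(3)$ you use generation of the associated graded algebra in degree one, which is exactly the paper's observation that multiplication gives a surjection $I(A)/I(A)^2\otimes I(A)^n/I(A)^{n+1}\to I(A)^{n+1}/I(A)^{n+2}$ (one nitpick: you state the degree-one generation for ${\sf Gr}(A)$ but the argument needs it for ${\sf Gr}(B)$ — harmless, since it holds for every augmented algebra). Your $(1)\Rightarrow(4)$ and $(4)\Rightarrow(2)$ are the paper's arguments (the paper uses the splitting $A/I(A)^2\cong\Bbbk\oplus I(A)/I(A)^2$ where you chase the ladder of short exact sequences). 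The genuine difference is the hard direction: the paper proves $(2)\&(3)\&(4)\Rightarrow(1)$ by applying the snake lemma to the kernels $K_n={\rm Ker}(A/I(A)^n\to B/I(B)^n)$, concluding $K_{n+1}\to K_n$ is surjective and invoking the Mittag-Leffler criterion for surjectivity of $\varprojlim$, whereas you go straight from $(3)$ to $(1)$ by constructing a compatible sequence of lifts $(a_n)$ by hand, correcting each lift by an element of ${\sf Gr}_n(A)$. This is the same mechanism with the Mittag-Leffler machinery unwound: your version is more elementary and self-contained and lets you run a clean cyclic chain $(2)\Rightarrow(3)\Rightarrow(1)\Rightarrow(4)\Rightarrow(2)$ without needing $(4)$ as an input to the completion step, while the paper's version is shorter once one is willing to cite the standard inverse-limit fact. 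Your closing caution is also well placed: an inverse limit of surjections between towers need not be surjective, so $(4)$ alone would not formally give $(1)$ without some such lifting or Mittag-Leffler argument.
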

\begin{proof}
$(3)\Rightarrow (2)$ is obvious.

$(2)\Rightarrow (3).$
Note that the multiplication induces a well defined surjective map $I(A)/I(A)^2 \otimes I(A)^n/I(A)^{n+1} \to I(A)^{n+1}/I(A)^{n+2}$ for any $n\geq 1.$ Therefore, by induction on $n$, we obtain the surjectivity of  $I(A)^n/I(A)^{n+1} \to I(B)^n/I(B)^{n+1}$ for any $n\geq 1$.

Thus we have proved  $(2) \Leftrightarrow (3).$

$(3) \Rightarrow (4).$ It follows by induction. 

$(4)\Rightarrow (2).$ Since $A/I(A)^2\cong \Bbbk\oplus I(A)/I(A)^2,$ the surjectivity of  $A/I(A)^2\to B/I(B)^2$ implies the surjectivity of $I(A)/I(A)^2\to I(B)/I(B)^2.$  

Thus we have proved  $(2) \Leftrightarrow (3) \Leftrightarrow (4).$

$(1) \Rightarrow (4).$ Since $A/I(A)^n$ is a quotient of $\hat A,$ the surjectivity of $\hat A\to \hat B$ implies the surjectivity of $A/I(A)^n\to B/I(B)^n.$ 

Now we only need to prove $(2)\& (3)\&(4) \Rightarrow (1).$ Set $K_n={\rm Ker}(A/I(A)^n\to B/I(B)^n)$ and
consider the following commutative diagram with exact rows and columns
$$
\begin{tikzcd}
&  & I(A)^n/I(A)^{n+1}\arrow{r}\arrow{d} & I(B)^{n}/I(B)^{n+1}\arrow{r} \arrow{d} & 0 \\
0\arrow{r} &K_{n+1} \arrow{d} \arrow{r} & A/I(A)^{n+1} \arrow{r} \arrow{d} & B/I(B)^{n+1} \arrow{r} \arrow{d} & 0\\
0\arrow{r} & K_{n} \arrow{r} & A/I(A)^{n} \arrow{r} & B/I(B)^{n} \arrow{r} & 0
\end{tikzcd}
$$
Then by the snake lemma, $K_{n+1}\to K_n$ is surjective. Hence, by the Mittag-Leffler condition, the map $\varprojlim A/I(A)^n\to \varprojlim B/I(b)^n$ is surjective. 
\end{proof}

\subsection{Augmented algebras of finite type}

We say that an augmented algebra $A$ is of {\it finite type} if $I(A)/I(A)^2$ is a finitely generated $\Bbbk$-module.

\begin{lemma}\label{lemma_free_is_finite}
For any finite set $X$ the algebra $\Bbbk \lb X\rb$ is of finite type.
\end{lemma}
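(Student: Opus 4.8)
The plan is to reduce the statement to a fact about the free algebra via \Cref{lemma_power_ideal}. Writing $I=I(\Bbbk\lb X\rb)$, note that by definition of finite type we must show that $I/I^2$ is a finitely generated $\Bbbk$-module. Since $X$ is finite, this is a statement about a low-degree truncation, which is precisely the kind of thing \Cref{lemma_power_ideal} controls.

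Concretely, apply \Cref{lemma_power_ideal} with $n=2$: it gives an isomorphism of augmented algebras $\Bbbk\lb X\rb/I^2 \cong \Bbbk\langle X\rangle/I(\Bbbk\langle X\rangle)^2$. An isomorphism of augmented algebras restricts to an isomorphism of augmentation ideals, and the augmentation ideal of $\Bbbk\lb X\rb/I^2$ is exactly $I/I^2$ (and likewise on the free side). Hence
$$
I/I^2 \;\cong\; I(\Bbbk\langle X\rangle)\big/I(\Bbbk\langle X\rangle)^2
$$
as $\Bbbk$-modules. It therefore suffices to observe that the right-hand side is a finitely generated $\Bbbk$-module.

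For that, I would use the explicit description of the powers of the augmentation ideal of the free algebra recorded just before \Cref{lemma_power_ideal}: $I(\Bbbk\langle X\rangle)$ consists of the finite sums $\sum_{w\in W_{\geq 1}}\alpha_w w$, and $I(\Bbbk\langle X\rangle)^2$ consists of the finite sums $\sum_{w\in W_{\geq 2}}\alpha_w w$. Thus the quotient $I(\Bbbk\langle X\rangle)/I(\Bbbk\langle X\rangle)^2$ is the free $\Bbbk$-module on $W_1(X)=X=\{x_1,\dots,x_N\}$, which is finitely generated (of rank $N$). Combining this with the isomorphism above shows $I/I^2$ is finitely generated, i.e.\ $\Bbbk\lb X\rb$ is of finite type.

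There is essentially no obstacle here: the only thing to be slightly careful about is that \Cref{lemma_power_ideal} is phrased for all $n\geq 1$, so invoking it at $n=2$ is legitimate, and that the passage from an isomorphism of augmented algebras to an isomorphism of the first graded pieces of the $I$-adic filtration is functorial and hence automatic. The argument is short and formal.
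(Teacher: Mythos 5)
Your argument is correct and is exactly the paper's proof, which simply cites \Cref{lemma_power_ideal}; you have just spelled out the details of applying it at $n=2$ and identifying $I/I^2$ with the free $\Bbbk$-module on $X$.
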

\begin{proof}
It follows from  \Cref{lemma_power_ideal}.
\end{proof}

\begin{proposition}\label{lemma_finite_type}
Let $A$ be an augmented algebra. Then the following statements  are equivalent: 
\begin{enumerate}
    \item $A$ is of finite type;
    \item ${\sf Gr}_n(A)$ is finitely generated as a $\Bbbk$-module for any $n\geq 0;$
    \item $A/I(A)^n$ is finitely generated as a $\Bbbk$-module for any $n;$
    \item $\hat A\cong \Bbbk \lb X \rb/\aa $ for some closed ideal $\aa \subseteq I(\Bbbk \lb X \rb)$ and finite $X.$
\end{enumerate}
\end{proposition}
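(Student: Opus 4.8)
The plan is to prove the cyclic chain of implications $(1)\Rightarrow(2)\Rightarrow(3)\Rightarrow(4)\Rightarrow(1)$, using the structural results already established.

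\emph{Proof of $(1)\Rightarrow(2)$.} Suppose $I(A)/I(A)^2$ is finitely generated over $\Bbbk$. As observed in the proof of \Cref{lemma_surjections}, multiplication induces a surjection ${\sf Gr}_1(A)^{\otimes n}\to {\sf Gr}_n(A)$ for every $n\geq 1$ (more precisely, the iterated multiplication map ${\sf Gr}_1(A)\otimes\dots\otimes {\sf Gr}_1(A)\to {\sf Gr}_n(A)$ is surjective). Since a tensor power of a finitely generated $\Bbbk$-module is finitely generated, so is its quotient ${\sf Gr}_n(A)$; and ${\sf Gr}_0(A)=A/I(A)\cong\Bbbk$ is finitely generated.

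\emph{Proof of $(2)\Rightarrow(3)$.} We argue by induction on $n$. For $n=1$, $A/I(A)\cong\Bbbk$. For the step, the short exact sequence of $\Bbbk$-modules $0\to I(A)^n/I(A)^{n+1}\to A/I(A)^{n+1}\to A/I(A)^n\to 0$, i.e. $0\to {\sf Gr}_n(A)\to A/I(A)^{n+1}\to A/I(A)^n\to 0$, exhibits $A/I(A)^{n+1}$ as an extension of two finitely generated $\Bbbk$-modules, hence finitely generated.

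\emph{Proof of $(3)\Rightarrow(4)$.} Since $A/I(A)^2$ is finitely generated over $\Bbbk$ and $A/I(A)^2\cong\Bbbk\oplus I(A)/I(A)^2$, the module $I(A)/I(A)^2$ is finitely generated, say by the images of finitely many elements $a_1,\dots,a_N\in I(A)$. Let $X=\{x_1,\dots,x_N\}$ and form the free augmented algebra $\Bbbk\langle X\rangle$; sending $x_i\mapsto a_i$ gives a morphism of augmented algebras $\Bbbk\langle X\rangle\to A$, hence after completion a morphism $\varphi:\Bbbk\lb X\rb\to\hat A$ (recall $\Bbbk\lb X\rb$ is the completion of $\Bbbk\langle X\rangle$). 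By construction the induced map $I(\Bbbk\langle X\rangle)/I(\Bbbk\langle X\rangle)^2\to I(A)/I(A)^2$ is surjective, so by \Cref{lemma_surjections} the map $\varphi$ is surjective. Setting $\aa={\rm Ker}(\varphi)$, we get $\hat A\cong\Bbbk\lb X\rb/\aa$ as augmented algebras with $\aa\subseteq I(\Bbbk\lb X\rb)$; and $\aa$ is closed because $\Bbbk\lb X\rb/\aa\cong\hat A$ is residually nilpotent by \Cref{lemma_completion_is_res_nil}, which forces $\bigcap_n(\aa+I^n)/\aa=0$, i.e. $\aa=\bigcap_n(\aa+I^n)$. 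Here I should double-check that $\varphi$ is a quotient map in the topological sense, so that the $I$-adic topology on $\hat A$ agrees with the quotient topology from $\Bbbk\lb X\rb$; this follows since $\varphi$ is continuous and surjective and $\varphi(I(\Bbbk\lb X\rb)^n)=I(\hat A)^n$, the latter using surjectivity together with the description of powers of the augmentation ideal.

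\emph{Proof of $(4)\Rightarrow(1)$.} If $\hat A\cong\Bbbk\lb X\rb/\aa$ with $X$ finite, then $I(\hat A)/I(\hat A)^2$ is a quotient of $I(\Bbbk\lb X\rb)/I(\Bbbk\lb X\rb)^2$, which is finitely generated over $\Bbbk$ by \Cref{lemma_free_is_finite}; hence $I(\hat A)/I(\hat A)^2$ is finitely generated. It remains to pass back from $\hat A$ to $A$: the natural map $A\to\hat A$ induces an isomorphism $A/I(A)^2\to\hat A/I(\hat A)^2$ (both sides equal $A/I(A)^2$ by definition of the inverse limit), hence an isomorphism $I(A)/I(A)^2\cong I(\hat A)/I(\hat A)^2$, so $I(A)/I(A)^2$ is finitely generated and $A$ is of finite type.

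The routine bookkeeping is the identification of the various quotients $A/I(A)^n$, $\hat A/I(\hat A)^n$, and $(\Bbbk\lb X\rb/\aa)/(\dots)^n$; the one point that deserves genuine care — and which I expect to be the main obstacle — is the closedness of $\aa$ in step $(3)\Rightarrow(4)$ together with the matching of topologies, i.e. verifying that a morphism of augmented algebras which is surjective on associated graded pieces realizes the target's $I$-adic topology as a quotient topology, so that $\hat A$ really is \emph{complete} (this is exactly the content of the forthcoming \Cref{th_completion_is_complete} and should be invoked or proved alongside).
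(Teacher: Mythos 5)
Most of your chain is sound and close to the paper's own argument: your $(1)\Rightarrow(2)\Rightarrow(3)$ is the same graded/filtration bookkeeping the paper extracts from \Cref{lemma_surjections}, and your $(3)\Rightarrow(4)$ (choose finitely many elements generating $I(A)/I(A)^2$, complete the induced map $\Bbbk\langle X\rangle\to A$ to a surjection $\Bbbk\lb X\rb\to\hat A$ via \Cref{lemma_surjections}, and get closedness of the kernel from residual nilpotency of $\hat A$, \Cref{lemma_completion_is_res_nil}) is essentially the paper's proof; your worry there about quotient topologies is unnecessary, since $(4)$ only asserts an isomorphism of augmented algebras onto a quotient by a closed ideal, not completeness of $\hat A$.

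The genuine gap is in $(4)\Rightarrow(1)$. You claim the natural map $A/I(A)^2\to\hat A/I(\hat A)^2$ is an isomorphism ``by definition of the inverse limit.'' By definition of the inverse limit one only gets $A/I(A)^2\cong\hat A/J_2$, where $J_2={\rm Ker}(\hat A\to A/I(A)^2)$, and in general $I(\hat A)^2\subsetneq J_2$: the inverse-limit topology and the $I(\hat A)$-adic topology need not agree, which is exactly the subtlety the paper stresses (this is why $\hat A$ need not be complete, and the equality $A/I(A)^n\cong\hat A/I(\hat A)^n$ is only proved later, in \Cref{th_completion_is_complete}, \emph{using} this proposition). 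What follows ``by definition'' is only split injectivity of $A/I(A)^2\to\hat A/I(\hat A)^2$, and injectivity does not give finite generation, since $\Bbbk$ is an arbitrary commutative ring. The repair is to run the comparison in the other direction, as the paper does: the projection $\hat A\twoheadrightarrow A/I(A)^2$ (surjective because the transition maps $A/I(A)^{n+1}\to A/I(A)^n$ are surjective) is a morphism of augmented algebras and hence kills $I(\hat A)^2$, so it induces an epimorphism $\hat A/I(\hat A)^2\twoheadrightarrow A/I(A)^2$. Since $\hat A\cong\Bbbk\lb X\rb/\aa$ is of finite type by \Cref{lemma_free_is_finite} (and \Cref{lemma_power_ideal}), the module $\hat A/I(\hat A)^2$ is finitely generated, hence so are $A/I(A)^2$ and its summand $I(A)/I(A)^2$. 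With this change your argument is correct and, apart from the cyclic organization $(1)\Rightarrow(2)\Rightarrow(3)\Rightarrow(4)\Rightarrow(1)$, matches the paper's proof.
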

\begin{proof}
$(2) \Rightarrow (1)$ and $(3)\Rightarrow(1)$ are obvious. 

$(4) \Rightarrow (1).$ By \Cref{lemma_free_is_finite}, $\hat A$ is of finite type. The epimorphism $\hat A \twoheadrightarrow  A/I(A)^2$ induces an epimorphism $\hat A/I(\hat A)^2 \twoheadrightarrow  A/I(A)^2.$ It follows that $A$ is of finite type.

$(1)\Rightarrow (2),(3),(4).$ Consider a finite set $X$ and a map $X\to I(A)$ such that the image of the composition $X\to I(A)\to I(A)/I(A)^2$ generates $I(A)/I(A)^2.$ 
This map induces a morphism $\Bbbk \langle X \rangle \to A$ such that $I(\Bbbk\langle X \rangle )/I(\Bbbk\langle X \rangle )^2\to I(A)/I(A)^2$ is surjective. Then $(2)$ and $(3)$ follow from  \Cref{lemma_surjections}. Moreover, it follows that the map $\Bbbk \lb X \rb\to \hat A$ is surjective. By  \Cref{lemma_completion_is_res_nil} the augmented algebra $\hat A$ is Hausdorff in the $I(A)$-adic topology and hence, $0$ is a closed ideal. Therefore $\aa={\rm Ker}(\Bbbk \lb X \rb\to \hat A )$ is closed and $(4)$ follows.
\end{proof}

\subsection{Complete augmented algebras}

An augmented algebra $A$ is called {\it complete} if the map $A\to \hat A$ is an isomorphism.

The completion $\hat A$ of an augmented algebra $A$ is {\bf not necessarily complete} as an augmented algebra, because the $I(\hat A)$-adic topology on $\hat A$ does not need to be equal to the inverse limit topology. However, we prove (\Cref{th_completion_is_complete}) that $\hat A$ is complete if $A$ is of finite type.

\begin{lemma}
For any finite set $X$ the algebra $\Bbbk \lb X\rb$ is complete.
\end{lemma}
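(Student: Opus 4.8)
The plan is to show that the natural map $\Bbbk\lb X\rb \to \widehat{\Bbbk\lb X\rb} = \varprojlim \Bbbk\lb X\rb/I^n$ is an isomorphism, where $X$ is finite. By \Cref{lemma_free_is_finite} the algebra $\Bbbk\lb X\rb$ is of finite type, and by \Cref{lemma_power_ideal} we have a natural isomorphism $\Bbbk\lb X\rb/I^n \cong \Bbbk\langle X\rangle/I^n$ for every $n$. Hence $\widehat{\Bbbk\lb X\rb} = \varprojlim \Bbbk\lb X\rb/I^n \cong \varprojlim \Bbbk\langle X\rangle/I^n = \Bbbk\lb X\rb$ by the very definition of the power series algebra as this inverse limit. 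So the bulk of the work is to check that, under this identification, the composite is precisely the canonical completion map $\Bbbk\lb X\rb\to \widehat{\Bbbk\lb X\rb}$, i.e.\ that the map is both injective and surjective in a compatible way.

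Concretely, I would argue as follows. First, \emph{injectivity}: an element $f=\sum_{w\in W}\alpha_w w$ lies in the kernel of $\Bbbk\lb X\rb\to \widehat{\Bbbk\lb X\rb}$ iff $p_n(f)=0$ for all $n$, i.e.\ iff $\alpha_w=0$ for all $w\in W_{<n}(X)$ for every $n$; since $\bigcup_n W_{<n}(X)=W(X)$, this forces $f=0$. Equivalently, $\bigcap_n I(\Bbbk\lb X\rb)^n = \bigcap_n \mathrm{Ker}(p_n) = 0$ by \Cref{lemma_power_ideal}, so $\Bbbk\lb X\rb$ is residually nilpotent. Second, \emph{surjectivity}: given a compatible family $(g_n)_n$ with $g_n\in \Bbbk\lb X\rb/I^n$, lift each $g_n$ via the isomorphism of \Cref{lemma_power_ideal} to an element of $\Bbbk\langle X\rangle/I^n$, which is represented by a unique polynomial $\sum_{w\in W_{<n}(X)}\alpha_w^{(n)} w$; compatibility means $\alpha_w^{(n)}=\alpha_w^{(m)}$ whenever $|w|<n\le m$, so the coefficients $\alpha_w := \alpha_w^{(|w|+1)}$ are well defined, and the power series $\sum_{w}\alpha_w w\in \Bbbk\lb X\rb$ maps to $(g_n)_n$.

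The main (and essentially only) obstacle is the bookkeeping that the inverse limit defining $\hat A$ uses the quotients $A/I(A)^n$ with the \emph{inverse limit topology}, whereas being ``complete'' also requires this to agree with the $I(\hat A)$-adic topology; but for $\Bbbk\lb X\rb$ this reduces to the identification $I(\Bbbk\lb X\rb)^n = \mathrm{Ker}(p_n)$ of \Cref{lemma_power_ideal}, which ensures that the basic neighborhoods coincide. So once \Cref{lemma_power_ideal} is in hand, no real difficulty remains: the statement follows from unwinding the definition of $\Bbbk\lb X\rb$ as a set of formal sums together with that lemma. One may also phrase it uniformly as a special case of the forthcoming \Cref{th_completion_is_complete} applied to $A=\Bbbk\langle X\rangle$, whose completion is $\Bbbk\lb X\rb$; I would, however, prefer to give the short direct argument here since it is needed as a building block.
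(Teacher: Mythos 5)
Your proof is correct and follows the same route as the paper, which simply states that the lemma follows from \Cref{lemma_power_ideal}; you have merely spelled out the coefficientwise injectivity and surjectivity bookkeeping that the paper leaves implicit.
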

\begin{proof}
It follows from  \Cref{lemma_power_ideal}.
\end{proof}
 
\begin{lemma}
Let $A\to B$ be a morphism of complete augmented algebras. Then the following statements are equivalent.
\begin{enumerate}
\item $A \to B$ is surjective;
\item $I(A)/I(A)^2\to I(B)/I(B)^2$ is surjective.
\item ${\sf Gr}(A)\to {\sf Gr}(B)$ is surjective;
\item  $A/I(A)^n\to B/I(B)^n$ is surjective for any $n.$
\end{enumerate}
\end{lemma}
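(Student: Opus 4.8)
The plan is to reduce everything to \Cref{lemma_surjections} by using that a complete augmented algebra is canonically isomorphic to its completion. First I would recall that the completion is functorial: since any morphism $f\colon A\to B$ of augmented algebras satisfies $f(I(A)^n)\subseteq I(B)^n$, it induces compatible maps $A/I(A)^n\to B/I(B)^n$ and hence a morphism $\hat f\colon \hat A\to\hat B$; moreover the square formed by $f$, $\hat f$ and the two canonical maps $A\to\hat A$, $B\to\hat B$ commutes. This naturality square is the only structural fact that needs to be spelled out.

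Next, because $A$ and $B$ are complete, the canonical maps $A\to\hat A$ and $B\to\hat B$ are isomorphisms of augmented algebras. They therefore induce isomorphisms on augmentation ideals, on the associated graded algebras ${\sf Gr}$, and on all the quotients $-/I(-)^n$. Under these identifications $f$ corresponds to $\hat f$, the map $I(A)/I(A)^2\to I(B)/I(B)^2$ to its counterpart for $\hat f$, and likewise for ${\sf Gr}$ and for the $n$-th quotients. So each of the four conditions in the present statement translates, via these identifications, into the corresponding condition of \Cref{lemma_surjections} for the morphism $\hat f\colon\hat A\to\hat B$.

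Finally I would invoke \Cref{lemma_surjections}, which gives the equivalence of: $\hat f$ surjective; $I(A)/I(A)^2\to I(B)/I(B)^2$ surjective; ${\sf Gr}(A)\to{\sf Gr}(B)$ surjective; and $A/I(A)^n\to B/I(B)^n$ surjective for every $n$. Combined with the translation of the previous paragraph, and with the remark that completeness identifies ``$f$ surjective'' with ``$\hat f$ surjective'' via the commutative naturality square, this yields the four claimed equivalences.

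I do not expect a genuine obstacle here. Conditions (2), (3), (4) depend only on the associated graded pieces and the finite quotients of $A$ and $B$, which are untouched by completion, so relative to \Cref{lemma_surjections} the only new content is passing from ``$\hat f$ surjective'' to ``$f$ surjective''; this is immediate once $A\cong\hat A$ and $B\cong\hat B$. The single point requiring care is the commutativity of the naturality square relating $f$ and $\hat f$, since that is exactly what licenses the replacement of $\hat f$ by $f$.
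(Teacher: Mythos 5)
Your proposal is correct and follows essentially the same route as the paper, which simply cites \Cref{lemma_surjections}: completeness identifies $f$ with $\hat f$ via the naturality square, and the three remaining conditions are literally those of that lemma. You merely spell out the identification that the paper leaves implicit.
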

\begin{proof}
It follows from  \Cref{lemma_surjections}.
\end{proof}

\begin{lemma}\label{lemma_complete_quo}
Suppose $A$ is a complete augmented algebra and  $\aa\subseteq I(A)$ is a closed ideal. Then $A/\aa$ is also complete. 
\end{lemma}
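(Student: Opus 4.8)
The plan is to compute $\widehat{A/\aa}$ explicitly and then check that the canonical map $A/\aa \to \widehat{A/\aa}$ is bijective, getting surjectivity from the surjection $A \epi A/\aa$ via \Cref{lemma_surjections} and injectivity from the closedness of $\aa$.

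First I would identify the augmentation ideal of the quotient. Since $\aa \subseteq I(A)$, the augmentation $\varepsilon_A$ descends to $A/\aa$, and $I(A/\aa) = I(A)/\aa$; consequently $I(A/\aa)^n = (I(A)^n + \aa)/\aa$ and hence there are natural isomorphisms $(A/\aa)/I(A/\aa)^n \cong A/(I(A)^n+\aa)$. Passing to the inverse limit, $\widehat{A/\aa} \cong \varprojlim_n A/(I(A)^n+\aa)$, and under this identification the canonical completion map $A/\aa \to \widehat{A/\aa}$ is the one induced by the projections $A \to A/(I(A)^n + \aa)$; in particular it factors through the tautological surjection $A \epi A/\aa$, and I will work with the resulting map $\varphi\colon A \to \widehat{A/\aa}$.

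For surjectivity I would apply \Cref{lemma_surjections} to the morphism $f\colon A \epi A/\aa$: condition $(2)$ there is immediate because $I(A)/I(A)^2 \to I(A/\aa)/I(A/\aa)^2$ is onto, so $\hat f\colon \hat A \to \widehat{A/\aa}$ is surjective. As $A$ is complete, $\hat A = A$, whence $\varphi = \hat f$ is surjective, and a fortiori $A/\aa \to \widehat{A/\aa}$ is surjective. For injectivity I would compute $\ker \varphi = \bigcap_{n\geq 1}(I(A)^n + \aa) = \overline{\aa}$, the closure of $\aa$ in the $I(A)$-adic topology; since $\aa$ is closed, $\overline{\aa}=\aa$, so the induced map $A/\aa \to \widehat{A/\aa}$ is injective. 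Combining the two steps, this canonical map is an isomorphism, i.e. $A/\aa$ is complete.

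I do not expect a serious obstacle here: the proof is essentially bookkeeping once the right objects are in place. The two points needing care are the identification $I(A/\aa) = I(A)/\aa$ (so that the $I(A/\aa)$-adic filtration of $A/\aa$ is literally the image of the $I(A)$-adic filtration of $A$), and the remark that the abstract isomorphism produced really is the canonical completion map and not merely some isomorphism; everything else is delegated to \Cref{lemma_surjections} (surjectivity) and to the hypothesis that $\aa$ is closed (injectivity).
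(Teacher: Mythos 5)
Your proposal is correct and follows essentially the same route as the paper: surjectivity of $A\to \widehat{A/\aa}$ via \Cref{lemma_surjections} together with completeness of $A$, and injectivity from $\bigcap_{n}(I(A)^n+\aa)=\aa$ because $\aa$ is closed, which is exactly the paper's computation $\bigcap_n I(A/\aa)^n=0$. The extra bookkeeping you spell out (identifying $I(A/\aa)^n=(I(A)^n+\aa)/\aa$ and checking the map is the canonical one) is implicit in the paper's argument and harmless.
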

\begin{proof}
 \Cref{lemma_surjections} implies that the map $A\to \widehat{A/\aa} $ is surjective. Hence $A/\aa \to  \widehat{A/\aa}$ is surjective. Since $\aa$ is closed, we have $\bigcap_{n=1}^\infty (\aa+I(A)^n)=\aa.$ Then map is injective because $\bigcap_{n=1}^\infty  I(A/\aa)^n= \bigcap_{n=1}^\infty (\aa+I(A)^n)/\aa=\aa/\aa=0.$
\end{proof}

\begin{theorem}\label{th_completion_is_complete}
Let $A$ be an augmented algebra  of finite type. Then $\hat A$ is complete and the map $A\to \hat A$ induces  isomorphisms $$A/I(A)^n\cong \hat A/I(\hat A)^n, \hspace{1cm} {\sf Gr}(A)\cong {\sf Gr}(\hat A).$$
In particular, $\hat A$ is of finite type. 
\end{theorem}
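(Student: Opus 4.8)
The plan is to bootstrap everything from a finite presentation of $\hat A$ as a quotient of a non-commutative power series algebra. Since $A$ is of finite type, pick lifts in $I(A)$ of a finite set of generators of $I(A)/I(A)^2$; this produces a finite set $X$ and a morphism of augmented algebras $\phi\colon \Bbbk\langle X\rangle \to A$ for which $I(\Bbbk\langle X\rangle)/I^2\to I(A)/I(A)^2$ is surjective (this is exactly the construction in the proof of \Cref{lemma_finite_type}). By \Cref{lemma_surjections} the completed map $\hat\phi\colon \Bbbk\lb X\rb \to \hat A$ is surjective. By \Cref{lemma_completion_is_res_nil} the zero ideal of $\hat A$ is closed, so $\aa={\rm Ker}(\hat\phi)$ is a closed ideal of $\Bbbk\lb X\rb$ contained in $I(\Bbbk\lb X\rb)$, and $\hat A\cong \Bbbk\lb X\rb/\aa$. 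Since $\Bbbk\lb X\rb$ is complete, \Cref{lemma_complete_quo} immediately gives that $\hat A$ is complete, which is the first assertion; and condition (4) of \Cref{lemma_finite_type}, now available for $\hat A$ (as $\widehat{\hat A}=\hat A$), shows $\hat A$ is of finite type.

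For the displayed isomorphisms, write $\lambda\colon A\to\hat A$ for the canonical map and $\pi_n\colon \hat A\to A/I(A)^n$ for the canonical projection out of the inverse limit. Being a morphism of augmented algebras, $\lambda$ induces $\alpha_n\colon A/I(A)^n\to \hat A/I(\hat A)^n$; and since $I(\hat A)^n\subseteq {\rm Ker}(\pi_n)$ (same argument as in \Cref{lemma_completion_is_res_nil}), $\pi_n$ induces $\beta_n\colon \hat A/I(\hat A)^n\to A/I(A)^n$. Because $\pi_n\circ\lambda\colon A\to A/I(A)^n$ is the canonical quotient map, we get $\beta_n\circ\alpha_n=\mathrm{id}$; in particular $\alpha_n$ is injective. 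It remains to prove $\alpha_n$ is surjective, which by \Cref{lemma_surjections} applied to $\lambda$ reduces to surjectivity of $I(A)/I(A)^2\to I(\hat A)/I(\hat A)^2$. Here we use that a surjective morphism of augmented algebras maps $I$ onto $I$, hence $I^n$ onto $I^n$: applying this to $\hat\phi$ together with \Cref{lemma_power_ideal} (which gives $\Bbbk\lb X\rb/I^2\cong \Bbbk\langle X\rangle/I^2$) shows that the composite $\Bbbk\langle X\rangle\to \Bbbk\lb X\rb\xrightarrow{\hat\phi}\hat A$ is surjective on $I/I^2$. By naturality of completion this composite equals $\lambda\circ\phi$, and $\phi$ is surjective on $I/I^2$ by construction; hence $\lambda$ is surjective on $I/I^2$, as required. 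Therefore each $\alpha_n$ is an isomorphism, $\beta_n=\alpha_n^{-1}$, and in passing $I(\hat A)^n={\rm Ker}(\pi_n)$.

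Finally, the isomorphisms $\alpha_n$ are compatible with the canonical quotients $A/I(A)^{n+1}\to A/I(A)^n$ and $\hat A/I(\hat A)^{n+1}\to \hat A/I(\hat A)^n$ (all of these maps being induced by $\lambda$ and the structure maps), so $\alpha_{n+1}$ restricts to an isomorphism on kernels ${\sf Gr}_n(A)=I(A)^n/I(A)^{n+1}\xrightarrow{\ \sim\ }I(\hat A)^n/I(\hat A)^{n+1}={\sf Gr}_n(\hat A)$; summing over $n$ yields the ring isomorphism ${\sf Gr}(A)\cong {\sf Gr}(\hat A)$ induced by $\lambda$. The finite type claim for $\hat A$ has already been noted (and also re-follows since each ${\sf Gr}_n(\hat A)\cong {\sf Gr}_n(A)$ is finitely generated). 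The main obstacle is the surjectivity of $\alpha_n$, i.e.\ ruling out that $I(\hat A)^n$ is strictly smaller than ${\rm Ker}(\pi_n)$; this is precisely where finiteness of $X$ enters essentially, through \Cref{lemma_power_ideal} and the surjection from $\Bbbk\lb X\rb$, and it is the step that fails for general (not finite-type) augmented algebras.
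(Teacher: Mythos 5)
Your proposal is correct and follows essentially the same route as the paper: present $\hat A$ as $\Bbbk\lb X\rb/\aa$ with $\aa$ closed (via \Cref{lemma_finite_type}), conclude completeness from \Cref{lemma_complete_quo}, get surjectivity of $A/I(A)^n\to \hat A/I(\hat A)^n$ from \Cref{lemma_surjections}, and injectivity from the retraction $\hat A/I(\hat A)^n\to A/I(A)^n$. The only (harmless) deviation is that you verify hypothesis (2) of \Cref{lemma_surjections} directly, via the surjection $\hat\phi:\Bbbk\lb X\rb\to\hat A$ and \Cref{lemma_power_ideal}, where the paper instead invokes hypothesis (1) through the just-established completeness of $\hat A$; your version spells out that step a bit more explicitly.
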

\begin{proof} By \Cref{lemma_finite_type}, $\hat A\cong \Bbbk \lb X \rb/\aa,$ where $\aa\subseteq I(\Bbbk \lb X \rb)$ is a closed ideal. Then, by \Cref{lemma_complete_quo}, $\hat A$ is complete.

Now we aim to prove  $A/I(A)^n\cong \hat A/I(\hat A)^n$ and $I^n(A)/I^{n+1}(A)\cong I^n(\hat A)/I^{n+1}(\hat A).$ The second one easily follows from the first one. So we need to prove only the first one. Since $A\to \hat A$ induces an isomorphism on completions,   \Cref{lemma_surjections} implies that $A/I(A)^n\to \hat A/I(\hat A)^n$ is surjective. On the other hand it is injective because the composition $A/I(A)^n \to \hat A/I(\hat A)^n \to A/I(A)^n$ is identical. 
\end{proof}

Denote by ${\sf Aug^{fin}}$ the category of augmented algebras of finite type. Consider its full subcategory ${\sf CAug^{fin}}$ consisting of complete algebras of finite type.

\begin{proposition}
 The operation of completion defines the left adjoint functor to the functor of embedding $\iota: {\sf CAug^{fin}} \hookrightarrow {\sf Aug^{fin}}.$ 
$$\widehat{(\cdot)} : {\sf Aug^{fin}} \leftrightarrows {\sf CAug^{fin}}: \iota.$$
\end{proposition}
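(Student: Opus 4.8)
The plan is to verify the adjunction directly from the universal property of completion, which has effectively already been packaged by the preceding lemmas. The key point is that for $A \in {\sf Aug^{fin}}$ we have $\hat A \in {\sf CAug^{fin}}$ by \Cref{th_completion_is_complete}, so the assignment $A \mapsto \hat A$ does land in the subcategory, and the natural map $\eta_A : A \to \hat A$ is a morphism of augmented algebras of finite type. I would then exhibit, for every complete augmented algebra $B$ of finite type, a natural bijection
$$
{\sf CAug^{fin}}(\hat A, B) \xrightarrow{\ \sim\ } {\sf Aug^{fin}}(A, \iota B), \qquad g \longmapsto g\circ \eta_A.
$$

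First I would establish injectivity of this map. Since $\hat A = \varprojlim A/I(A)^n$, the image of $\eta_A$ is dense in $\hat A$: indeed $\eta_A(A) + I(\hat A)^n = \hat A$ for every $n$ because $A/I(A)^n \cong \hat A/I(\hat A)^n$ by \Cref{th_completion_is_complete}. Any morphism $g : \hat A \to B$ is continuous (as noted in the preliminaries, every morphism of augmented algebras is continuous for the $I$-adic topologies), and $B$ is Hausdorff since it is complete; hence $g$ is determined by its restriction to the dense subset $\eta_A(A)$, which gives injectivity. Second, for surjectivity, given $f : A \to \iota B$ I would construct $g : \hat A \to B$. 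For each $n$, $f$ induces $A/I(A)^n \to B/I(B)^n$; passing to inverse limits and using $A/I(A)^n \cong \hat A/I(\hat A)^n$ together with $B \cong \hat B = \varprojlim B/I(B)^n$ (completeness of $B$) yields a morphism of augmented algebras $g = \hat f : \hat A \to \hat B \cong B$. One checks $g \circ \eta_A = f$ by comparing the two sides modulo $I(B)^n$ for every $n$ and invoking that $B$ is Hausdorff. Naturality in both $A$ and $B$ is the routine verification that $\widehat{(\cdot)}$ is a functor and that $\eta$ is a natural transformation, which follows from functoriality of the inverse limit.

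The mild subtlety, and the only place one must be a little careful, is that $\hat A$ genuinely lies in ${\sf CAug^{fin}}$ rather than merely in ${\sf Aug^{fin}}$: this is exactly where finiteness of type is used, via \Cref{th_completion_is_complete}, and it is what makes the embedding $\iota$ the correct target category for the adjoint. Everything else is a formal consequence of the universal property of $\varprojlim$ combined with the isomorphisms $A/I(A)^n \cong \hat A/I(\hat A)^n$ and the Hausdorff property of complete algebras. Thus the proof is short: cite \Cref{th_completion_is_complete} to see the functor is well defined, then define the unit to be $A \to \hat A$ and check the bijection above using density of the image and Hausdorffness of the target.
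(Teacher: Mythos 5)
Your proof is correct and follows essentially the same route as the paper: it cites \Cref{th_completion_is_complete} to see that completion lands in ${\sf CAug^{fin}}$, constructs the extension of $f:A\to B$ as the completion $\hat f$ composed with $\hat B\cong B$, and obtains uniqueness from density of the image of $A\to\hat A$ together with continuity of morphisms and the Hausdorff property of the complete target. The only difference is that you spell out the density claim and the verification $g\circ\eta_A=f$ slightly more explicitly than the paper does, which is harmless.
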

\begin{proof}
By \Cref{th_completion_is_complete}, the functor of completion $\widehat{(\cdot)} : {\sf Aug^{fin}} \to {\sf CAug^{fin}}$ is well defined. Then we only need to prove that for any morphism from an augmented algebra to a complete augmented algebra $f:A\to C$ there exists a unique $\hat f:\hat A\to C$ that makes the diagram is commutative. Such a morphism  can be constructed as the composition of the completion of the original morphism and the isomorphism  $\hat A\to \hat C \cong C.$ It is unique because the image of $A\to \hat A$ is dence in $\hat A$ and the morphisms $\hat A\to C$ are continuous. 
\end{proof}

\begin{theorem}\label{theorem1.2}
Let $A$ be an augmented algebra of finite type. Then $A$ is complete if and only if  there exists a finite set $X$ and a closed ideal $\aa \subseteq I( \Bbbk \langle\!\langle X \rangle \!\rangle)$ such that $$A\cong \Bbbk \langle \! \langle X \rangle\!\rangle /\aa.$$
\end{theorem}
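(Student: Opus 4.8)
The plan is to prove both implications by leaning on the characterization of finite type algebras in \Cref{lemma_finite_type} together with \Cref{th_completion_is_complete} and \Cref{lemma_complete_quo}.

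For the ``if'' direction, suppose $A\cong \Bbbk\lb X\rb/\aa$ with $X$ finite and $\aa\subseteq I(\Bbbk\lb X\rb)$ closed. First I would note that $\Bbbk\lb X\rb$ is complete (this is one of the lemmas above) and of finite type (\Cref{lemma_free_is_finite}). Then \Cref{lemma_complete_quo} applies directly: the quotient of a complete augmented algebra by a closed ideal contained in the augmentation ideal is again complete. Hence $A$ is complete. (Finite type of $A$ is not needed here as a hypothesis — it is automatic, e.g. by condition $(4)\Rightarrow(1)$ of \Cref{lemma_finite_type} — but since it is assumed in the statement there is nothing to check.)

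For the ``only if'' direction, assume $A$ is of finite type and complete. By \Cref{lemma_finite_type}, condition $(1)\Rightarrow(4)$, there is a finite set $X$ and a closed ideal $\aa\subseteq I(\Bbbk\lb X\rb)$ with $\hat A\cong \Bbbk\lb X\rb/\aa$. Since $A$ is complete, the canonical map $A\to\hat A$ is an isomorphism of augmented algebras, so composing gives $A\cong \Bbbk\lb X\rb/\aa$, which is exactly the desired presentation.

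The only subtlety worth spelling out — and the closest thing to an obstacle — is making sure the identification $\Bbbk\lb X\rb=\widehat{\Bbbk\langle X\rangle}$ is being used consistently: the notation $\Bbbk\langle\!\langle X\rangle\!\rangle$ in the statement is the algebra of non-commutative power series, which was observed earlier to be the completion of $\Bbbk\langle X\rangle$, and \Cref{lemma_finite_type}(4) is phrased in terms of exactly this algebra. So no real work is hidden there; both directions are short assemblies of the already-established lemmas, and I would present the proof in essentially two sentences, one per implication, citing \Cref{lemma_complete_quo} (with \Cref{lemma_free_is_finite}) for one direction and \Cref{lemma_finite_type} for the other.
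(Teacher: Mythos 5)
Your proposal is correct and matches the paper's own proof: the ``if'' direction is exactly the application of \Cref{lemma_complete_quo} to the complete algebra $\Bbbk\lb X\rb$, and the ``only if'' direction is the chain $A\cong\hat A\cong\Bbbk\lb X\rb/\aa$ obtained from completeness together with \Cref{lemma_finite_type}(4). Nothing further is needed.
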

\begin{proof}
Any algebra of the form $\Bbbk \langle \! \langle X \rangle\!\rangle /\aa$ is complete by  \Cref{lemma_complete_quo}. On the other hand, by  \Cref{lemma_finite_type}, we have $A\cong \hat A \cong \Bbbk \lb X\rb /\aa$ for a closed ideal $\aa\subseteq I(\Bbbk \lb X\rb).$
\end{proof}

\section{Gr\"obner-Shirshov bases for closed ideals of $\Bbbk \lb X \rb$}

The goal of this section is to prove a variant of the Composition-Diamond lemma for closed ideals of the algebra of non-commutative power series $\Bbbk\lb X \rb$ in a more general setting then it was done by Gerritzen and Holtkamp  \cite{GH98}. Namely we prove it with respect to an arbitrary admissible $\mathbb N$-order on the free monoid $W(X)$  and over an arbitrary commutative ring $\Bbbk$. We also want to mention a thesis of Hellstr\"om  \cite{H02} devoted to a similar question but our approach is independent.

\subsection{Topological bases and $I$-adic bases}

Gerritzen and Holtkamp in  \cite{GH98} use the notion of $I$-adic basis. In our generalisation we need a more general notion of topological basis that is introduced in this subsection.

Here  $\Bbbk$ is considered as a discrete topological commutative ring. 

\begin{definition}[topological sum] Let $M$ be a Hausdorff topological $\Bbbk$-module and 
let $(m_i)_{i\in \mathfrak{I}}$ be a family in $M$ indexed by some not necessarily finite set 
$\mathfrak{I}.$ An element $m$ is called {\it topological sum} of this family if for any neighbourhood $U$ of $m$ there exists a finite subset $\mathfrak{F}\subseteq \mathfrak{I}$ such that for any  finite $\mathfrak{F}'$,
satisfying 
$\mathfrak{F}\subseteq  \mathfrak{F}' \subseteq \mathfrak{I}$, 
we have $\sum_{i\in \mathfrak{F}'} m_i\in U.$ If a topological sum exists, then the family $(m_i)_{i\in\mathfrak{I}}$ is called {\it summable}.  Since $M$ is Hausdorff, we see that a topological sum, if it exists, is unique and we denote it by
$$m=\sum_{i\in \mathfrak{I}} m_i.$$
We do not call topological sums ``series'' so as not to confuse with elements of $\Bbbk \lb X\rb$.
\end{definition}

\begin{definition}[topological basis] Let $M$ be a Hausdorff topological $\Bbbk$-module and let $(b_i)_{i\in \mathfrak{I}}$ be a family of elements of $M$. A {\it topological linear combination} of $(b_i)_{i\in \mathfrak{I}}$ is a topological sum of the form $\sum_{i\in \mathfrak{I}} \alpha_i b_i,$ where $(\alpha_i)_{i\in \mathfrak{I}} $ is a family of scalars such that $(\alpha_ib_i)_{i\in \mathfrak{I}}$ is summable. 
The family $(b_i)_{i\in \mathfrak{I}}$ is called {\it topological basis} of $M$ if any element of $M$ can be uniquely presented as its topological linear combination.  
\end{definition}

\begin{lemma}\label{lemma_top_sum}
Let $A$ be a complete augmented algebra, let $(a_i)_{i\in \mathfrak{I}},a_i\in A$ be a family and $\mathfrak{I}_n=\{i\in \mathfrak{I}\mid a_i\notin I(A)^n\}.$ Then $(a_i)_{i\in \mathfrak{I}}$ is summable if and only if $\mathfrak{I}_n$ is finite for any $n$. \end{lemma}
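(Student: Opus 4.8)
**The plan is to prove both implications directly from the definition of summability and the fact that $A$ is complete, i.e. $A = \varprojlim A/I(A)^n$.**

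First I would prove the easy direction: if $(a_i)_{i\in\mathfrak{I}}$ is summable with topological sum $a$, then each $\mathfrak{I}_n$ is finite. Apply the definition of topological sum with the neighbourhood $U = a + I(A)^n$ of $a$: there is a finite $\mathfrak{F}\subseteq\mathfrak{I}$ such that $\sum_{i\in\mathfrak{F}'} a_i \in a + I(A)^n$ for every finite $\mathfrak{F}'$ with $\mathfrak{F}\subseteq\mathfrak{F}'\subseteq\mathfrak{I}$. In particular, for any single index $j\notin\mathfrak{F}$, taking $\mathfrak{F}' = \mathfrak{F}$ and $\mathfrak{F}'' = \mathfrak{F}\cup\{j\}$ and subtracting, we get $a_j \in I(A)^n$. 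Hence $\mathfrak{I}_n \subseteq \mathfrak{F}$, so $\mathfrak{I}_n$ is finite. (Here I use that $I(A)^n$ is an open additive subgroup, so the neighbourhoods $a + I(A)^n$ form a base at $a$ and closure under subtraction is available.)

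For the converse, assume $\mathfrak{I}_n$ is finite for every $n$; note $\mathfrak{I}_1\subseteq\mathfrak{I}_2\subseteq\cdots$ and $\bigcup_n\mathfrak{I}_n$ may be all of $\mathfrak{I}$, but what matters is that the partial sums over $\mathfrak{I}_n$ stabilize modulo each power of $I(A)$. Concretely, for each $n$ set $s_n = \sum_{i\in\mathfrak{I}_n} a_i \in A$ (a finite sum). I would check that the images $\bar s_n \in A/I(A)^n$ are compatible: if $m \le n$, then $s_n - s_m = \sum_{i\in\mathfrak{I}_n\setminus\mathfrak{I}_m} a_i$, and every such $a_i$ lies in $I(A)^m$ by definition of $\mathfrak{I}_m$, so $s_n \equiv s_m \pmod{I(A)^m}$. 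Thus $(\bar s_n)_n$ defines an element $a \in \varprojlim A/I(A)^n = A$. It remains to verify that $a$ really is the topological sum of $(a_i)_{i\in\mathfrak{I}}$: given the basic neighbourhood $U = a + I(A)^n$, take $\mathfrak{F} = \mathfrak{I}_n$; then for any finite $\mathfrak{F}'$ with $\mathfrak{I}_n\subseteq\mathfrak{F}'\subseteq\mathfrak{I}$ we have $\sum_{i\in\mathfrak{F}'} a_i - s_n = \sum_{i\in\mathfrak{F}'\setminus\mathfrak{I}_n} a_i \in I(A)^n$ (again each such $a_i\in I(A)^n$), and $s_n \equiv a \pmod{I(A)^n}$, so $\sum_{i\in\mathfrak{F}'} a_i \in a + I(A)^n = U$. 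This is exactly the summability condition.

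The main obstacle — really the only point requiring care — is making sure that $A$ being \emph{complete} (not merely residually nilpotent) is used correctly: completeness is precisely what lets me produce the candidate sum $a$ as a genuine element of $A$ from the compatible system $(\bar s_n)_n$, rather than only in $\hat A$. Residual nilpotence alone would give uniqueness of the sum but not existence. I should also remark at the start that $A$ Hausdorff guarantees the topological sum is unique, so the statement of the lemma is unambiguous. Everything else is a routine unwinding of the definition of topological sum against the neighbourhood base $\{a + I(A)^n\}_n$ at $a$.
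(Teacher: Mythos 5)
Your proposal is correct and follows essentially the same route as the paper: the forward direction by testing the neighbourhood $a+I(A)^n$ and comparing partial sums over $\mathfrak{F}$ and $\mathfrak{F}\cup\{j\}$, and the converse by forming the finite partial sums $s_n=\sum_{i\in\mathfrak{I}_n}a_i$, using completeness to obtain their limit, and checking summability against the basic neighbourhoods. The paper phrases the converse via convergence of the Cauchy sequence $(s_n)$ rather than explicitly through $\varprojlim A/I(A)^n$, but this is only a cosmetic difference.
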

\begin{proof}
Assume that $(a_i)_{i\in \mathfrak{I}}$ is summable and $s=\sum_{i\in \mathfrak{I}} a_i$. For a finite $\mathfrak{F}\subseteq \mathfrak{I}$ we consider  a sum  $s_\mathfrak{F}=\sum_{i\in \mathfrak{F}} a_i.$  Then for any $n$ there exists a finite $\mathfrak{F}_n\subseteq \mathfrak{I}$ such that for any finite $\mathfrak{F}'$ satisfying  $\mathfrak{F}_n \subseteq \mathfrak{F}' \subseteq \mathfrak{I}$ we have $s-s_{\mathfrak{F}'} \in I(A)^n.$ Then for any $i\notin \mathfrak{F}_n$ we have $a_i=(s-s_{\mathfrak{F}_n}) - (s-s_{\mathfrak{F_n}\cup\{i\}}) \in I(A)^n.$  
It follows that $\mathfrak{I}_n\subseteq \mathfrak{F}_n$ and hence $\mathfrak{I}_n$ is finite. 

Now  assume that $\mathfrak{I}_n$ is finite and set $s_n=\sum_{i\in \mathfrak{I}_n} a_i.$ Then $s_n-s_{n+1}\in I^n$ and hence $s_n$ converges to some element $s$ so that $s-s_n\in I^n.$ For any neighbourhood $U$ of $s$ there is $n$ such that $s+I^n\subseteq U.$ Since $a_i\in I^n$ for any $i\notin \mathfrak{I}_n,$ we obtain that $\sum_{i\in \mathfrak{F}'} a_i\in s+I^n\subseteq U$ for any finite $\mathfrak{F}'$ such that $ \mathfrak{I}_n \subseteq  \mathfrak{F}' \subseteq \mathfrak{I}.$ Then $(a_i)_{i\in \mathfrak{I}}$ is summable. 
\end{proof}

\begin{definition}[$I$-adic basis] 
Let $A$ be a complete augmented algebra, $(b_i)_{i\in\mathfrak{I}},b_i\in A$ be a family and $\mathfrak{I}_n=\{i\in \mathfrak{I}\mid b_i\notin I(A)^n\}.$ Following \cite{GH98} we say that $(b_i)_{i\in \mathfrak{I}}$ is an {\it $I$-adic basis} if $(p_n(b_i))_{i\in \mathfrak{I}_n}$ is a basis of the $\Bbbk$-module $A/I(A)^n$ for any $n\geq 0.$
\end{definition}

Here we give an interpretation of the notion of the $I$-adic basis in terms of topological bases.

\begin{proposition}\label{prop_I-adic}
Let $A$ be a complete augmented algebra, $(b_i)_{i\in\mathfrak{I}},b_i\in A$ be a family and $\mathfrak{I}_n=\{i\in \mathfrak{I}\mid b_i\notin I(A)^n\}.$ Then $(b_i)_{i\in \mathfrak{I}}$ is an $I$-adic basis if and only if $(b_i)_{i\in \mathfrak{I}\setminus \mathfrak{I}_n}$ is a topological basis of $I(A)^n$ for any $n.$
\end{proposition}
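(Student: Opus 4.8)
The plan is to prove the two implications separately, keeping track of how the index sets $\mathfrak I_n$ stratify the family $(b_i)_{i\in\mathfrak I}$. The key observation underlying everything is that, since $A$ is complete, $A/I(A)^n$ is a discrete quotient and $b_i \in I(A)^n$ exactly when $i \in \mathfrak I\setminus\mathfrak I_n$; so the image $p_n(b_i)$ is nonzero only for $i\in\mathfrak I_n$, and conversely $I(A)^n = \varprojlim_m I(A)^n/I(A)^m$ carries the subspace topology. I would first record, via \Cref{lemma_top_sum} applied to $A$, that a subfamily indexed by $\mathfrak J\subseteq\mathfrak I$ is summable iff $\mathfrak J\cap\mathfrak I_m$ is finite for every $m$, and that if it is summable its topological sum reduces, modulo $I(A)^m$, to the ordinary finite sum $\sum_{i\in \mathfrak J\cap\mathfrak I_m} p_m(b_i)$. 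This gives the essential bridge: a topological linear combination $\sum_{i\in\mathfrak I\setminus\mathfrak I_n}\alpha_i b_i$ lands in $I(A)^n$ automatically, and its reduction mod $I(A)^m$ (for $m\ge n$) is the finite $\Bbbk$-linear combination $\sum_{i\in(\mathfrak I\setminus\mathfrak I_n)\cap\mathfrak I_m}\alpha_i\, p_m(b_i)$ in $I(A)^n/I(A)^m$.

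For the forward direction, assume $(b_i)_{i\in\mathfrak I}$ is an $I$-adic basis. Fix $n$ and take $a\in I(A)^n$. For each $m\ge n$, since $(p_m(b_i))_{i\in\mathfrak I_m}$ is a basis of $A/I(A)^m$ and $a\equiv 0$ mod $I(A)^n$, the expansion of $p_m(a)$ uses only those $i$ with $p_m(b_i)\in I(A)^n/I(A)^m$, i.e. only $i\in\mathfrak I_m\setminus\mathfrak I_n$; this gives unique scalars $\alpha_i^{(m)}$ for $i\in\mathfrak I_m\setminus\mathfrak I_n$. Compatibility of the bases across $m$ (both come from the single family $b_i$) forces $\alpha_i^{(m)}=\alpha_i^{(m')}$ whenever both are defined, so we get a single family $(\alpha_i)_{i\in\mathfrak I\setminus\mathfrak I_n}$ with $\{i : \alpha_i\ne 0\}\cap\mathfrak I_m$ finite for each $m$; by \Cref{lemma_top_sum} the family $(\alpha_i b_i)$ is summable and its topological sum agrees with $a$ modulo every $I(A)^m$, hence equals $a$ since $A$ is Hausdorff. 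Uniqueness of the topological linear combination follows from uniqueness of each finite expansion mod $I(A)^m$. Thus $(b_i)_{i\in\mathfrak I\setminus\mathfrak I_n}$ is a topological basis of $I(A)^n$.

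For the converse, assume $(b_i)_{i\in\mathfrak I\setminus\mathfrak I_n}$ is a topological basis of $I(A)^n$ for every $n$; we must show $(p_n(b_i))_{i\in\mathfrak I_n}$ is a $\Bbbk$-basis of $A/I(A)^n$ for every $n$. Spanning: given $\bar a\in A/I(A)^n$, lift to $a\in A$; write $a=\varepsilon_A(a)\cdot 1 + a_0$ with $a_0\in I(A)=I(A)^1$, expand $a_0$ in the topological basis indexed by $\mathfrak I\setminus\mathfrak I_1$, and reduce mod $I(A)^n$: only finitely many terms survive, namely those $i\in(\mathfrak I\setminus\mathfrak I_1)\cap\mathfrak I_n$, and together with $1$ this expresses $\bar a$ as a finite $\Bbbk$-combination of the $p_n(b_i)$, $i\in\mathfrak I_n$ (noting $p_n(b_i)$ for $i\notin\mathfrak I_n$ is $0$). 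Linear independence: a relation $\sum_{i\in F}\alpha_i p_n(b_i)=0$ in $A/I(A)^n$ with $F\subseteq\mathfrak I_n$ finite means $\sum_{i\in F}\alpha_i b_i\in I(A)^n$; writing this element via the topological basis of $I(A)^n$ (indexed by $\mathfrak I\setminus\mathfrak I_n$) and comparing with its manifest expansion as a finite combination of $b_i$, $i\in F\subseteq\mathfrak I_n$ — so its $I(A)^n$-indexed coordinates are all zero — uniqueness of topological linear combinations across the two stratifications forces all $\alpha_i=0$. I expect the main obstacle to be exactly this last bookkeeping: making precise that an element lying in $I(A)^n$ which is \emph{also} written as a finite combination of $b_i$ with $i\in\mathfrak I_n$ must have all coefficients zero. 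The clean way is to prove the auxiliary claim that for any $m$ the finite family $(p_m(b_i))_{i\in\mathfrak I_m}$ is $\Bbbk$-linearly independent in $A/I(A)^m$ — which follows by downward induction using the topological-basis hypothesis for $n=m$ together with the span statement already obtained — and then reduce every finitary relation to that level.
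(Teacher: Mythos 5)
Your forward direction is essentially the paper's argument and is fine (modulo the small point that ``the expansion of $p_m(a)$ uses only those $i$ with $p_m(b_i)\in I(A)^n/I(A)^m$'' deserves the one-line justification that the remaining vectors $(p_m(b_i))_{i\in\mathfrak I_n}$ map to a basis of $A/I(A)^n$, so that $(p_m(b_i))_{i\in\mathfrak I_m\setminus\mathfrak I_n}$ is indeed a basis of $I(A)^n/I(A)^m$). The converse, however, has a genuine gap in the spanning step. The element $1$ is not one of the $b_i$, so the expression you obtain, $\bar a=\varepsilon_A(a)\,p_n(1)+\sum_{i}\alpha_i\,p_n(b_i)$, is a combination of $p_n(1)$ \emph{and} the $p_n(b_i)$; it does not show that $(p_n(b_i))_{i\in\mathfrak I_n}$ spans $A/I(A)^n$. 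What is missing is the $n=0$ instance of the hypothesis: since $I(A)^0=A$ and $\mathfrak I_0=\emptyset$, the assumption for $n=0$ says that $(b_i)_{i\in\mathfrak I}$ is a topological basis of $A$ itself, and this is exactly what the paper uses — it expands a lift $a'$ of $\bar a$ directly as $\sum_{i\in\mathfrak I}\alpha_i b_i$ and reduces modulo $I(A)^n$. This instance cannot be avoided: in $A=\Bbbk\lb x\rb$ the family $(x^i)_{i\geq 1}$ is a topological basis of $I(A)^n$ for every $n\geq 1$, yet it is not an $I$-adic basis because the images miss $1$ in $A/I(A)^n$. Your argument only ever invokes the hypotheses for $n\geq 1$, so as written it would ``prove'' a false statement.

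The same issue undermines your linear-independence step. ``Uniqueness of topological linear combinations across the two stratifications'' is precisely what needs to be proved, not something your invoked hypotheses provide: uniqueness inside $I(A)^n$ concerns only indices in $\mathfrak I\setminus\mathfrak I_n$, whereas the coefficients you must kill sit on indices in $\mathfrak I_n$. The paper's resolution is again the $n=0$ case: from $\sum_{i\in F}\alpha_i b_i=\sum_{i\in\mathfrak I\setminus\mathfrak I_n}\beta_i b_i$ (with $F\subseteq\mathfrak I_n$ finite) one forms a single topological combination over all of $\mathfrak I$ equal to $0$ and applies uniqueness for the topological basis of $A$. Your fallback ``auxiliary claim \dots by downward induction'' is not an argument: linear independence of $(p_m(b_i))_{i\in\mathfrak I_m}$ for all $m$ is exactly the remaining half of the $I$-adic basis property, and no base case or inductive step is indicated, nor is it clear how the hypothesis at level $n=m$ alone could yield it. Both defects are repaired simultaneously by explicitly using that $(b_i)_{i\in\mathfrak I}$ is a topological basis of $A=I(A)^0$, as in the paper's proof.
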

\begin{proof} 
Assume that $(b_i)_{i\in \mathfrak{I}}$ is an $I$-adic basis. Fix $n\geq 0.$ Since $(p_n(b_i))_{i\in \mathfrak{I}_n}$ is a basis for any $n,$ we obtain that $(p_m(b_i))_{i\in \mathfrak{I}_m\setminus \mathfrak{I}_n}$ is a basis of $I^n/I^m$ for any $m>n.$ Take an element $a\in I^n.$ Consider a linear combitation $p_m(a)=\sum_{i\in \mathfrak{I}_m \setminus \mathfrak{I}_n} \alpha_{i,m} p_m(b_i).$ 
The fact that the map $p_{m}:A\to A/I^m$ factors as $A\to A/I^{m+1} \to A/I^m,$ implies that $\alpha_{i,m}=\alpha_{i,m+1}$ for any $i\in \mathfrak{I}_m\setminus \mathfrak{I}_n.$ Hence we can define $\alpha_i:=\alpha_{i,m}$ and obtain $a=\sum_{i\in \mathfrak{I}\setminus \mathfrak{I}_n} \alpha_{i} b_i.$ 

Prove that this presentation is unique. 
It is enough to prove that the equality  $\sum_{i\in \mathfrak{I} \setminus \mathfrak{I}_n } \alpha_i b_i=0$ 
implies $\alpha_i=0$ for any $i\in \mathfrak{I}\setminus \mathfrak{I}_n.$ 
If  $\sum_{i\in \mathfrak{I}\setminus \mathfrak{I}_n} \alpha_i b_i=0,$ then  
$\sum_{i\in \mathfrak{I}_n \setminus  \mathfrak{I}_m} \alpha_i p_m(b_i)=0$ for any $m>n,$ where this sum is a usual linear combination. Hence $\alpha_i=0$ for any $i\in\mathfrak{I}_n \setminus   \mathfrak{I}_m$ and for any $m> n.$

Now assume that  $(b_i)_{i\in \mathfrak{I}\setminus \mathfrak{I}_n}$ is a topological basis of $I^n$ for any $n$ and prove that it is an $I$-adic basis.
First we prove that $(p_n(b_i))_{i\in \mathfrak{I}_n}$ spans $A/I^n.$ Take an element $a\in A/I^n$ and any its preimage $a'\in A.$ Present $a'$ as a topological linear combination  $a'=\sum_{i\in \mathfrak{I}} \alpha_ib_i.$ Since $(\alpha_ib_i)_{i\in \mathfrak{I}}$ is summable, then $\mathfrak{F}_n:=\{i\in \mathfrak{I}\mid p_n(\alpha_ib_i)\ne 0 \}$ is finite. Note that $\mathfrak{F}_n\subseteq \mathfrak{I}_n.$ Then $a=\sum_{i\in \mathfrak{F}_n} \alpha_i b_i$ and hence $(p_n(b_i))_{i\in \mathfrak{I}_n}$ spans $ A/I^n.$ 
 
 Prove that $(p_n(b_i))_{i\in \mathfrak{I}_n}$ is linearly independent. Assume $\sum_{i\in \mathfrak{I}_n} \alpha_i p_n(b_i)=0.$ Then $\sum_{i\in \mathfrak{I}_n} \alpha_i b_i\in I^n.$ Therefore $\sum_{i\in \mathfrak{I}_n} \alpha_i b_i= 
 \sum_ {i\in  \mathfrak{I} \setminus \mathfrak{I}_n} \beta_i b_i$ for some family $(\beta_i)_{i\in \mathfrak{I} \setminus \mathfrak{I}_n}.$ Hence  $\sum_{i\in \mathfrak{I}} \gamma_i b_i=0,$ where $\gamma_i=\alpha_i$ for $i\in \mathfrak{I}_n$ and $\gamma_i=-\beta_i$ for $i\in \mathfrak{I}\setminus \mathfrak{I}_n.$ It follows that $\gamma_i=0$ for any $i\in \mathfrak{I}$ and hence $\alpha_i=0$ for any $i\in \mathfrak{I}_n.$ Thus $(p_n(b_i))_{i\in \mathfrak{I}_n}$ is linearly independent.
\end{proof}

\subsection{Composition-Diamond lemma}

An {\it $\mathbb N$-order} is an order $\leq$ on some countable set which is isomorphic to $\mathbb N.$ In other words, $\mathbb N$-order is a total order such that any bounded above subset is finite.

An  order $<$ on a monoid $M$ is called  \textit{admissible} if it is total and satisfies $1< m$ for all $m\in M\setminus \{1\}$, and $m_1 <  m_2$ implies $m'm_1m''  <  m'm_2m''$ for any $m',m''\in M$. Note that admissible orders can exist only on cancellative monoids.

Further we assume that  $X=\{x_1,\dots,x_N\}$ is a finite set, where $x_1,\dots,x_N$ are distinct elements, and we assume that $W:=W(X)$ is endowed by an admissible $\mathbb N$-order such that $x_1<x_2<\dots <x_N$. 
For example, deg-lex order is an admissible $\mathbb N$-order on $W$.

For any $f=\sum f_w w\in \Bbbk\lb X\rb,$ where $f_w\in \Bbbk,$ we set $\mathrm{supp}(f):=\{w \in W: f_w \ne 0\}.$ If $f\ne 0,$ we denote by $\lll(f)$ the minimal element of $\mathrm{supp}(f).$ We call $\lll(f)$ \textit{the minimal monomial} of $f.$ For any subset $S\subseteq \Bbbk \lb X \rb \setminus 0 $ we set $\lll(S)=\{\lll(s)\mid s\in S\}.$ Note that if $\aa$ is a closed ideal of $\Bbbk \lb X \rb,$ then $\lll(\aa\setminus 0 )$ is a monoid ideal of the monoid $W(X).$ Further, we set $\rrr(f)=f- f_{\lll(f)}\lll(f).$ Then
$$f=f_{\lll(f)} \lll(f)+\rrr(f)$$
and $\lll(\rrr(f))>\lll(f).$ A power series $f$ is called {\it monic}, if $f_{\lll(f)}=1.$ A set $S$ of power series is called monic if it consists of monic power series. 

For a subset $S\subseteq \Bbbk \lb X \rb$ we denote by $\overline{(S)}$ the closed ideal of $S\subseteq \Bbbk \lb X \rb$  generated by $S.$

\begin{definition}[Gr\"obner--Shirshov basis]
 A monic set  $S\subseteq \Bbbk \lb X \rb $ is called a \emph{Gr\"obner--Shirshov basis} if  $\lll(\overline{(S)}\setminus 0)=W\lll(S)W.$ In other words, the monoid ideal $\lll(\overline{(S)}\setminus 0)$ is generated by $\lll(S).$
\end{definition}

\begin{definition}[Compositions]
 Let $f,g \in \Bbbk \lb X \rb$ be two monic power series and $w\in W$. We introduce two types of elements that will be called {\it compositions} of $f$ and $g$ with respect to $w:$
\begin{itemize}
\item If $w = \lll(f)\cdot  u = v \cdot \lll(g)$ for some $u,v\in W$, $u,v \ne 1$, then the power series $f u -  vg$ is called the {\it intersection composition} of $f$ and $g$ with respect to $w$.
\item If $w = \lll(f) = u\cdot \lll(g)\cdot v$ for some $u,v \in W$, then the power series $f -u g v$ is called the {\it inclusion composition} of $f$ and $g$ with respect to $w$.
\end{itemize}
\end{definition}

Note that for any composition $h$ of $f$ and $g$ with respect to $w$ we have $\lll(h)> w$ and $h\in \overline{(f,g)}.$  

\begin{definition}[Trivial modulo $(S,w)$]
Let $S \subseteq \Bbbk\lb X \rb$ be a monic set and $w\in W.$ We say that $f \in \Bbbk\lb X\rb$ is  {\it trivial modulo} $(S,w)$,
if $f$ can be presented as an topological sum
$$f = \sum_{i\in \mathbb N}  g_is_ih_i, \hspace{0.5cm} \text{such that} \hspace{0.5cm} \lll(g_i) \lll(s_i) \lll(h_i) > w$$
 $s_i\in S,g_i,h_i\in \Bbbk \lb X \rb$  for any $i\in \mathbb N.$ 
 
We say that $f_1$ and $f_2$ are equal modulo $(S,w),$ denoted by $$f_1\equiv f_2 \mod (S,w),$$ if $f_1-f_2$ is trivial modulo $(S,w).$
\end{definition}

\begin{theorem}[{Composition--Diamond lemma}]\label{CDA} Let $X$ be a finite set, let $W=W(X)$  be the free monoid endowed by an admissible $\mathbb N$-order and $S$ be a monic subset of  $\Bbbk \lb X \rb.$ Set  $A:=\Bbbk \lb X\rb/\overline{(S)},$  $\beta(S):=W\setminus W\lll(S)W$  and denote by  $\varphi: \Bbbk \lb X\rb\to A$  the projection. Then the following statements are equivalent:
\begin{itemize}
\item[\rm (1)] $S$ is a Gr\"obner-Shirshov basis;
\item[\rm (2)]   every composition of series of $S$ with respect to a word $w$ is trivial modulo $(S,w)$;

\item[\rm (3)] If $f\in \overline{(S)}\setminus 0$ and $w<\lll(f),$ then $f$ is trivial modulo $(S,w);$

\item[\rm (4)]  $(\varphi(w))_{w\in \beta(S)}$ is a topological basis of $A.$
\end{itemize}
\end{theorem}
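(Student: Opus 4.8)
The plan is to prove the cycle of implications $(1)\Rightarrow(3)\Rightarrow(2)\Rightarrow(1)$ and $(1)\Leftrightarrow(4)$, imitating the standard proof of the Composition--Diamond lemma but replacing finite reductions by transfinite/topological ones, which is legitimate because the $\mathbb N$-order guarantees that any strictly increasing sequence of monomials (in the $\preccurlyeq$-sense, here $\mathsf{mt}$) escapes every $W_{<n}(X)$, so the $I$-adic topology controls convergence. Throughout I would work with $\iota$ the obvious inclusion $\Bbbk\langle X\rangle\hookrightarrow\Bbbk\lb X\rb$ only implicitly and use \Cref{lemma_power_ideal} freely.

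For $(1)\Rightarrow(3)$: given $f\in\overline{(S)}\setminus 0$ with $w<\mathsf{mt}(f)$, since $S$ is a Gr\"obner--Shirshov basis we have $\mathsf{mt}(f)=u\,\mathsf{mt}(s)\,v$ for some $s\in S$, $u,v\in W$. Then $f_1:=f-f_{\mathsf{mt}(f)}\,u\,s\,v$ lies in $\overline{(S)}$, the subtracted term is of the required form with $\mathsf{mt}(u)\mathsf{mt}(s)\mathsf{mt}(v)=\mathsf{mt}(f)>w$, and $\mathsf{mt}(f_1)>\mathsf{mt}(f)$. Iterating produces a sequence $f=f_0,f_1,f_2,\dots$ with $\mathsf{mt}(f_i)$ strictly increasing; because $\preccurlyeq$ is an $\mathbb N$-order, $\mathsf{mt}(f_i)\to\infty$, so $f_i\to 0$, and collecting the subtracted terms $g_is_ih_i$ exhibits $f$ as the required topological sum — the summability being exactly \Cref{lemma_top_sum} applied to $A=\Bbbk\lb X\rb$ since $g_is_ih_i\in I^{n_i}$ with $n_i\to\infty$. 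This convergence bookkeeping is the one genuinely new point compared to the polynomial case, so I would state it carefully. The implication $(3)\Rightarrow(2)$ is immediate: any composition $h$ of $f,g\in S$ with respect to $w$ satisfies $h\in\overline{(f,g)}\subseteq\overline{(S)}$ and $\mathsf{mt}(h)>w$, so $(3)$ applies verbatim.

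For $(2)\Rightarrow(1)$ (the main obstacle): one must show $\mathsf{mt}(\overline{(S)}\setminus 0)\subseteq W\,\mathsf{mt}(S)\,W$ (the reverse inclusion is trivial). Take $0\ne f\in\overline{(S)}$; write $f$ as a topological sum $\sum_i g_is_ih_i$ with $s_i\in S$. Set $w_i:=\mathsf{mt}(g_i)\mathsf{mt}(s_i)\mathsf{mt}(h_i)$ and let $w^\ast$ be the $\preccurlyeq$-minimal value among the $w_i$; by the $\mathbb N$-order property only finitely many $i$ attain it, so $w^\ast$ is well-defined. If $\mathsf{mt}(f)=w^\ast$ we are done since $w^\ast\in W\,\mathsf{mt}(S)\,W$. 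Otherwise $\mathsf{mt}(f)>w^\ast$ and the coefficient of $w^\ast$ cancels; exactly as in Shirshov's argument one rewrites the finitely many terms with $w_i=w^\ast$ using the two cancellation patterns — this is where one invokes $(2)$ to replace a difference of two aligned terms (or an inclusion) by something trivial modulo $(S,w^\ast)$ — thereby producing a new representation of $f$ whose minimal $w$-value is strictly $\succ w^\ast$. Because $\preccurlyeq$ is an $\mathbb N$-order this strict increase cannot continue forever below $\mathsf{mt}(f)$, so after finitely many steps the minimal value equals $\mathsf{mt}(f)$, giving $\mathsf{mt}(f)\in W\,\mathsf{mt}(S)\,W$. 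The delicate part is checking that each rewriting step keeps the sum a valid topological sum and strictly raises the minimal value without disturbing already-processed higher terms; I would organize this as a well-founded induction on $w^\ast$ using the $\mathbb N$-order.

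Finally $(1)\Leftrightarrow(4)$: assuming $(1)$, I claim $(\varphi(w))_{w\in\beta(S)}$ is a topological basis of $A$. Spanning: for $a\in A$ pick a preimage $f\in\Bbbk\lb X\rb$; if $\mathsf{mt}(f)\in\beta(S)$, subtract $f_{\mathsf{mt}(f)}\varphi(\mathsf{mt}(f))$, otherwise $\mathsf{mt}(f)=u\,\mathsf{mt}(s)\,v$ and subtract $f_{\mathsf{mt}(f)}\varphi(u s v)=0$ from the representative — either way raise $\mathsf{mt}$, iterate, and use the $\mathbb N$-order for convergence to express $a=\sum_{w\in\beta(S)}\alpha_w\varphi(w)$. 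Independence: if $\sum_{w\in\beta(S)}\alpha_w\varphi(w)=0$ with not all $\alpha_w=0$, then $f:=\sum\alpha_w w\in\overline{(S)}\setminus 0$ has $\mathsf{mt}(f)\in\beta(S)$, contradicting $(1)$ since $\mathsf{mt}(f)\notin W\,\mathsf{mt}(S)\,W$. Conversely, if $(4)$ holds and $S$ were not a Gr\"obner--Shirshov basis, there is $0\ne f\in\overline{(S)}$ with $\mathsf{mt}(f)\in\beta(S)$; reducing $f$ against $\beta(S)$ as above and comparing with the unique topological expansion of $\varphi(f)=0$ forces a nontrivial dependence, contradiction. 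All the summability assertions here reduce to \Cref{lemma_top_sum} and \Cref{prop_I-adic}.
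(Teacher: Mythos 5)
Your arguments for $(1)\Rightarrow(3)$, $(3)\Rightarrow(2)$ and $(1)\Leftrightarrow(4)$ are essentially sound and amount to the same reduction procedure as the paper's \Cref{lemma_replasement_fg}, with the $\mathbb N$-order supplying convergence via \Cref{lemma_top_sum}. The genuine gap is the very first step of your $(2)\Rightarrow(1)$: you ``write $f$ as a topological sum $\sum_i g_is_ih_i$'' for an arbitrary $0\ne f\in\overline{(S)}$. This is not automatic and is in fact close to the statement being proved. An element of the closed ideal is only a limit of finite combinations, $f\in\bigcap_n\bigl((S)+I^n\bigr)$; to merge the approximating combinations into a single summable family you would need every element of $(S)\cap I^n$ to decompose into terms $g s h$ which are themselves small, and that kind of control is exactly what statement $(3)$ asserts. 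The paper's own first counterexample (the set $s_n=x_2^nx_3^nx_4^n-x_1$) shows the phenomenon concretely: $x_1$ lies in $\overline{(S)}$, yet comparing the coefficient of $x_1$ with the coefficients of the words $x_2^nx_3^nx_4^n$ shows $x_1$ is not a summable topological combination $\sum g_is_nh_i$ --- and neither the closure nor the set of such topological sums depends on the chosen order, so this failure cannot be dismissed as an artifact of the well-order used there. Assuming the representation at the outset therefore makes your $(2)\Rightarrow(1)$ circular (or at least leaves its hardest point unproved), and with it the whole equivalence.

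The paper avoids this entirely by truncating: given $f\in\overline{(S)}\setminus 0$, choose $k$ larger than the length of every word $\preccurlyeq{\sf mt}(f)$ (a finite set, by the $\mathbb N$-order), pass to $\Bbbk\lb X\rb/I^k\cong\Bbbk\langle X\rangle/I^k$ (\Cref{lemma_power_ideal}), where $f\in(S)+I^k$ yields an honest \emph{finite} presentation $\tilde f=\sum_{i=1}^n\alpha_iu_i\tilde s_iv_i$, and then runs the classical Shirshov argument --- choose the presentation with maximal possible $w_1=u_1{\sf mt}(s_1)v_1$ and use \Cref{lemma1} (which is where hypothesis $(2)$ enters) to rule out cancellation of the leading layer. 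This finite-sum device also dissolves the secondary issue you flag yourself, namely the summability bookkeeping when rewriting infinitely many terms, and it works over an arbitrary commutative ring $\Bbbk$, where your product-of-terms reasoning (e.g.\ that a term with small $w_i$ cannot lie deep in $I^n$) would need extra care because of possible zero divisors. To repair your proof you should either prove the representation claim under hypothesis $(2)$ before using it, or replace the first step of $(2)\Rightarrow(1)$ by the truncation argument.
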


To prove the CD-lemma we need the following two lemmas.

\begin{lemma}\label{lemma1}
Let $S$ satisfy $(2)$ of the theorem. If
$w=u_1 \lll(s_1)  v_1=u_2  \lll(s_2)  v_2$, where
$u_1,v_1,u_2,v_2\in W,\ s_1,s_2\in S$, then $u_1s_1v_1 \equiv u_2 s_2 v_2 \mod (S,w). $
\end{lemma}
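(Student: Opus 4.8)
The plan is to reduce the general case to the ``minimal overlap'' cases handled by the compositions, using the admissibility and $\mathbb N$-order properties to run an induction on the position word. First I would fix the data $w = u_1\lll(s_1)v_1 = u_2\lll(s_2)v_2$ and consider the relative position of the two occurrences $\lll(s_1)$ and $\lll(s_2)$ inside $w$. There are three geometric possibilities: (a) the two subwords $\lll(s_1)$ (in position $u_1\,\cdot\,v_1$) and $\lll(s_2)$ (in position $u_2\,\cdot\,v_2$) are disjoint in $w$; (b) one contains the other; (c) they properly overlap. In case (a), say $\lll(s_1)$ occurs entirely to the left of $\lll(s_2)$, write $w = u_1 \lll(s_1) c \lll(s_2) v_2$ with $v_1 = c\lll(s_2)v_2$ and $u_2 = u_1\lll(s_1)c$; then the element $u_1 s_1 c s_2 v_2$ lets us pass from $u_1 s_1 v_1$ to $u_2 s_2 v_2$ in one step, since
\[
u_1 s_1 v_1 - u_2 s_2 v_2 = u_1 s_1 c\,(\lll(s_2)-s_2)\,v_2 \;+\; u_1\,(\lll(s_1)-s_1)\,c\,\lll(s_2)\,v_2 ,
\]
wait — more cleanly, I would observe $u_1 s_1 v_1 \equiv u_1\lll(s_1)\,c\,s_2 v_2$ and $u_2 s_2 v_2 \equiv u_1 s_1\,c\,\lll(s_2)v_2$, and both of these are $\equiv u_1 s_1 c s_2 v_2$ modulo $(S,w)$ because replacing an $s_i$ by $\lll(s_i)$ (or vice versa) away from the relevant spot changes the element by terms of the form $g\,(s_i-\lll(s_i))\,h$ whose minimal monomial is strictly above $w$; here one uses that $\lll(\rrr(s_i))>\lll(s_i)$ together with admissibility of the order to get $\lll\big(g\,\rrr(s_i)\,h\big) = \lll(g)\lll(\rrr(s_i))\lll(h) > \lll(g)\lll(s_i)\lll(h)$, which equals $w$ in the disjoint configuration.

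In cases (b) and (c) the point is that $u_1 s_1 v_1 - u_2 s_2 v_2$ is, up to an obvious left/right multiplication by words $g,h\in W$, equal to $g$ times an inclusion or intersection composition $h_{12}$ of $s_1,s_2$ with respect to the overlap word $w'$ (so that $w = g w' h$). Hypothesis (2) gives that $h_{12}$ is trivial modulo $(S,w')$, i.e. $h_{12}=\sum_i g_i s_i h_i$ with $\lll(g_i)\lll(s_i)\lll(h_i)>w'$; multiplying by $g$ on the left and $h$ on the right and using admissibility ($\lll(g\,g_i)\lll(s_i)\lll(h_i\,h) = \lll(g)\lll(g_i)\lll(s_i)\lll(h_i)\lll(h) > \lll(g)w'\lll(h) = w$) shows $u_1 s_1 v_1 - u_2 s_2 v_2$ is trivial modulo $(S,w)$, which is exactly the claim. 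One has to be a little careful that $u_1 s_1 v_1 - u_2 s_2 v_2$ really is $g\,h_{12}\,h$ and not off by terms involving $\rrr(s_1),\rrr(s_2)$: I expect these extra terms all have the form $g'\,(s_j-\lll(s_j))\,h'$ with $\lll(g')\lll(s_j)\lll(h') = w$, hence are themselves trivial modulo $(S,w)$, so they can be absorbed.

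The main obstacle, and the part that needs genuine care rather than bookkeeping, is the convergence/summability issue hiding in case (b)/(c): after expanding $u_1 s_1 v_1 - u_2 s_2 v_2$ one wants a single topological sum $\sum_{i\in\mathbb N} g_i s_i h_i$ with the monomial condition, and one must check that the family being summed is actually summable in $\Bbbk\lb X\rb$ and that rearranging/regrouping the finitely many contributions (the composition term $g\,h_{12}\,h$ plus the finitely many ``correction'' terms $g'(s_j-\lll(s_j))h'$) preserves triviality modulo $(S,w)$. This is where the $\mathbb N$-order does real work: ``trivial modulo $(S,w)$'' is stable under adding two such expressions and under left/right multiplication precisely because for a fixed $n$ only finitely many monomials lie below the $n$-th element of $W$, so the reindexing of a countable sum by $\mathbb N$ is unproblematic; I would state this closure property (trivial modulo $(S,w)$ is closed under addition and under multiplication by elements of $\Bbbk\lb X\rb$ on both sides, with the appropriate shift of $w$) as a small preliminary observation and then the three-case analysis above goes through routinely.
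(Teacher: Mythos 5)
Your argument is essentially the paper's proof: the same three-case analysis (disjoint occurrences, inclusion, proper overlap), with the disjoint case handled by rewriting the difference via $\rrr(s_1),\rrr(s_2)$ so that each summand $u_1\rrr(s_1)us_2v_2$, $u_1s_1u\rrr(s_2)v_2$ has product of minimal monomials strictly above $w$, and the other two cases reduced to an inclusion or intersection composition, whose triviality modulo $(S,w')$ is transported to triviality modulo $(S,w)$ by multiplying the topological sum by words on both sides (admissibility plus continuity of multiplication, which you rightly isolate as a closure property the paper leaves implicit). One small correction: your hedge in cases (b)/(c) about possible leftover terms $g'(s_j-\lll(s_j))h'$ with $\lll(g')\lll(s_j)\lll(h')=w$ being ``themselves trivial modulo $(S,w)$'' is false as stated, since triviality requires the \emph{strict} inequality $\lll(g_i)\lll(s_i)\lll(h_i)>w$; it is harmless only because no such terms actually occur --- a one-line computation gives exactly $u_1s_1v_1-u_2s_2v_2=u_1(s_1-us_2v)v_1$ in the inclusion case and $u_1(s_1v-us_2)v_2$ in the overlap case, so you should verify this identity rather than appeal to that absorption claim.
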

\begin{proof}
There are three cases to consider.

Case 1. Suppose that subwords $\lll(s_1)$ and $\lll(s_2)$ of $w$ are disjoint, say, $|u_2|\geq |u_1|+|\lll(s_1)|$. Then, $u_2=u_1\lll(s_1) u$ and $v_1=u \lll(s_2) v_2 $ for some $u\in W$, and so, $w=u_1\lll(s_1) u \lll(s_2) v_2.$

Now,
\begin{eqnarray*}
u_1s_1v_1- u_2s_2v_2 &=& u_1 s_1 u \lll(s_2)
v_2-u_1\lll(s_1) u  s_2 v_2\\
&=& - u_1 s_1 u (s_2-\lll(s_2)) v_2 + u_1(s_1-\lll(s_1)) u  s_2 v_2\\
&=& -u_1s_1u\rrr(s_2)v_2+u_1\rrr(s_1) us_2v_2.
\end{eqnarray*}
Since $\lll(\rrr(s_2))> \lll(s_2)$ and
$\lll(\rrr(s_1))> \lll(s_1)$, 
we conclude that 
$$\lll(u_1)\lll(s_1)\lll(u\rrr(s_2)v_2)>w, \hspace{1cm} \lll(u_1\rrr(s_1) u) \lll(s_2)\lll(v_2)>w.$$   Hence $u_1s_1v_1- u_2s_2v_2 \equiv 0 \mod (S,w).$

Case 2. Suppose that the subword $\lll(s_1)$ of $w$ contains $\lll(s_2)$ as a subword. Then $\lll(s_1)=u\lll(s_2)v,$ $u_2=u_1u$ and $v_2=vv_1,$ that is, $w=u_1u \lll(s_2)vv_1$.

We have
$
u_1 s_1 v_1-u_2 s_2 v_2 = u_1( s_1 - u s_2 v)v_1.$
Since the inclusion composition $s_1-us_2v$ is trivial modulo $(S,\lll(s_1)),$ the statement follows.

Case 3. $\lll(s_1)$ and $\lll(s_2)$ have a nonempty intersection as
a subword of $w$. We may assume that $u_2=u_1u,$ $v_1=vv_2,$ $w=u_1\lll(s_1) v v_2= u_1 u \lll(s_2) v_2$, where $u,v\ne 1.$ Then $s_1v-us_2$ is an intersection composition and, similar to the Case 2, we have $u_1s_1v_1 \equiv u_2s_2v_2 \mod
(S,w).$
\end{proof}

\begin{lemma}\label{lemma_replasement_fg}
Let $S\subseteq \Bbbk \lb X \rb $ be a monic set,  $f\in \Bbbk \lb X \rb$ and $w\in W$ such that $w<\lll(f).$ Then there exists $g\in \Bbbk \lb X \rb$ such that $f\equiv g\mod  (S,w)$ and $ {\rm supp}(g)\subseteq \beta(S).$ Moreover, if $\lll(f)\in \beta(S),$ then $\lll(g)=\lll(f).$ 
\end{lemma}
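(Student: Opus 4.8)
The plan is to build $g$ as a topological sum obtained by iteratively rewriting the monomials of $f$ that lie in $W\lll(S)W$, using the transfinite/$\mathbb{N}$-order structure to guarantee convergence. First I would handle the monic-reduction step locally: if $v\in \mathrm{supp}(f)$ and $v\in W\lll(S)W$, write $v = u\,\lll(s)\,v'$ with $s\in S$, and replace the term $f_v v$ by $f_v v - f_v\, u\, s\, v' = -f_v\, u\,\rrr(s)\,v'$. Since $S$ is monic, this is an instance of $f_v v \equiv -f_v u\,\rrr(s)v' \pmod{(S,w)}$ whenever $u\,\lll(s)\,v' > w$, and crucially $\lll(u\,\rrr(s)\,v') > u\,\lll(s)\,v' = v$ because $\lll(\rrr(s)) > \lll(s)$ and the order is admissible. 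So one reduction step strictly increases the minimal "bad" monomial while only introducing monomials that are strictly larger than the one removed.

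The key is then to organize these steps into a convergent process. Here is where the $\mathbb{N}$-order hypothesis does the work: enumerate $W = \{w_0 < w_1 < w_2 < \dots\}$ (recall an admissible $\mathbb{N}$-order is order-isomorphic to $\mathbb{N}$, and every monomial that appears lies $\geq \lll(f) > w$). Process monomials in increasing order. When we reach a bad monomial $v = w_k \in W\lll(S)W$ with nonzero current coefficient, perform the reduction above; all new monomials produced are $> w_k$, hence appear later in the enumeration, so they will be dealt with in due course, and $w_k$ (and everything below it) is never touched again. At stage $k$ the partial result agrees with the final $g$ on all monomials $\leq w_k$. This defines $g = \sum_{w\in\beta(S)} g_w w$ with $\mathrm{supp}(g)\subseteq \beta(S)$, and the difference $f - g$ is exhibited as the topological sum $\sum_i g_i s_i h_i$ of all the correction terms $-f_v^{(\text{current})}\, u\, s\, v'$ produced, each satisfying $\lll(g_i)\lll(s_i)\lll(h_i) > w$; summability of this family follows from \Cref{lemma_top_sum} since for each $n$ only finitely many monomials lie below the $n$-th "level" $I(\Bbbk\lb X\rb)^n$ (equivalently: only finitely many $w_k$ with $w_k$ below a given length, by the $\mathbb{N}$-order), so only finitely many correction terms fail to lie in $I^n$. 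Hence $f \equiv g \pmod{(S,w)}$.

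For the "moreover" clause: if $\lll(f)\in\beta(S)$, then $\lll(f)$ is not bad, so the very first reduction step acts on some monomial $v > \lll(f)$; every correction term introduced has minimal monomial $> v > \lll(f)$, so no step ever affects the coefficient of $\lll(f)$ nor introduces a monomial $\leq \lll(f)$. Therefore $g_{\lll(f)} = f_{\lll(f)} \neq 0$ and $g$ has no monomials below $\lll(f)$, i.e. $\lll(g) = \lll(f)$.

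The main obstacle I anticipate is the convergence/summability bookkeeping: one must be careful that the reduction process, although it may run through infinitely many stages, genuinely stabilizes on each monomial (so that $g$ is well defined) and that the collected family of correction terms $(g_i s_i h_i)_{i\in\mathbb{N}}$ is summable in the $I$-adic topology of $\Bbbk\lb X\rb$. Both are clean consequences of two facts — (i) each reduction step strictly raises the minimal bad monomial in the $\mathbb{N}$-order, and (ii) \Cref{lemma_top_sum} characterizes summability via finiteness of $\mathfrak{I}_n$ — but writing this down rigorously, rather than hand-waving "iterate and pass to the limit", is the real content of the proof.
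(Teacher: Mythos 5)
Your proposal is correct and follows essentially the same route as the paper's proof: enumerate the bad monomials above $w$ in increasing $\mathbb{N}$-order, kill their coefficients one at a time by adding terms $\alpha\, u s v$ whose new monomials are strictly larger, and deduce convergence of the correction series from the finiteness of words of bounded length. The bookkeeping you flag (stabilization of each coefficient and $I$-adic summability) is handled in the paper exactly as you sketch, via the inductively constructed partial sums $g^{(n)}=f+\sum_{i\le n}\alpha_i u_i s_i v_i$.
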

\begin{proof} In this prove for a power series $h$ and a word $w$ we denote by $h_w$ the coefficient of $h$ next to $w$ so that $h=\sum h_ww.$  Denote by $W_{>w}$ the set of words greater than $w.$ Note that ${\rm supp}(f)\subseteq W_{>w}.$
Since $W$ is $\mathbb N$-ordered, we can present $W\lll(S)W \cap W_{>w}$ as a increasing sequence of words  $W\lll(S)W\cap W_{>w} =\{w_1<w_2<w_3<\dots\}.$  

For each $n\geq 1$ we take the element 
$w_n\in W\lll(S)W \cap W_{>w}$ and decompose it as  $w_n=u_n\lll(s_n)v_n$ for some $u_n,v_n\in W, s_n\in S.$   We will construct inductively two sequences $g^{(1)},g^{(2)},\dots\in \Bbbk \lb X\rb$ and $\alpha_1,\alpha_2,\dots \in \Bbbk$  satisfying: 
\begin{itemize}
\item $g^{(n)}=f+\sum_{i=1}^n \alpha_iv_is_iu_i;$
\item ${\rm supp}(g^{(n)}) \cap \{w_1,\dots,w_{n}\}=\emptyset;$ 
\item if $\lll(f)\in \beta(S),$ then $\lll(f)=\lll(g^{(n)})$,
\end{itemize}  
for each $n\geq 1.$
Take  $\alpha_1:=-f_{w_1}$ and $g^{(1)}=f+\alpha_1u_1s_1v_1.$ Hence $w_1\notin {\rm supp}(g^{(1)})$ and, if $\lll(f)\in \beta(S),$ then $\lll(f)=\lll(g^{(1)}).$ Assume that we have already constructed $g^{(i)},\alpha_i$ for $i<n.$ Take $\alpha_n=-g^{(n-1)}_{w_n}$ and $g^{(n)}=g^{(n-1)}+\alpha_nv_ns_nu_n .$ It is easy to check that  $g^{(n)}$ satisfies the conditions. 

Since there is only finite number of words $w$ of length no more than $m,$ we obtain that there is $n_0$ such that  $\alpha_n u_n s_n v_n\in I^m$ for any $ n\geq n_0.$ Then the sequence $g^{(n)}$ converges to some power series $g=f+\sum_{i=1}^\infty \alpha_iu_is_iv_i.$ It is easy to see that $g$ satisfies all required conditions. 
\end{proof}

\begin{proof}[Proof of \Cref{CDA}]
$(2) \Rightarrow (1).$ Take $f \in \overline{(S)} \setminus 0$ and show 
that ${\sf mt}(f) \in W{\sf mt}(S)W.$ Chose $k$ big enough so that $k>|w|$ for any $w\leq {\sf mt}(f).$ For any power series $h$ we denote by  $\tilde h:=p_k(h)$ its image in $\Bbbk \lb X \rb.$ Then $\tilde f\in (  \tilde S).$ Using that 
$\Bbbk \lb X \rb/I^k \cong \Bbbk \langle X \rangle /I^k$ (\Cref{lemma_power_ideal}), 
we obtain 
\begin{equation}\label{eq_pres_f}
\tilde f=\sum_{i=1}^n\alpha_i u_i \tilde s_i v_i,
\end{equation}
for some  $u_i,v_i\in W,s_i\in S, \alpha_i \in \Bbbk\setminus\{0\}$ such that 
$$u_1 {\sf mt}(s_1) v_1 \leq u_2{\sf mt}(s_2) v_2\leq \dots \leq u_n{\sf mt}(s_n) v_n.$$ 
Set $w_i:=u_i{\sf mt}(s_i)v_i\in W{\sf mt}(S)W.$
Since $k$ is big enough, we have that ${\sf mt}(f)={\sf mt}( \sum_{i=1}^n\alpha_i u_i s_i v_i )$ and hence $w_1 \leq {\sf mt}(f).$

There are many  presentations of $\tilde f$ of the form \eqref{eq_pres_f} and now we chose the one with the maximal possible $w_1.$ Such a  presentation exists because $w_1 \leq {\sf mt}(f).$

Let 
\[
 w_1 = w_2 = \cdots = w_\ell < w_{\ell+1} \le \cdots.
\]
If $\ell =1$, then ${\sf mt}(f) = {\sf mt}( u_1 s_1 v_1 ) = w_1\in W{\sf mt}(S)W$ and the statement follows.
Assume that $\ell \ge 2$. Then by  \Cref{lemma1}, 
$$u_is_iv_i\equiv  u_1 s_1 v_1 \mod (S,w_1),$$ for $1 \le i \le \ell$. 
It follows that for $1\leq i \leq \ell$ we have 
$$u_i \tilde s_i v_i=u_1  \tilde  s_1 v_1\ +\ \sum_{j=1}^{m_i} \alpha'_{i,j}  u'_{i,j} \tilde  s'_{i,j} v'_{i,j}  $$
such that $u'_{i,j}{\sf mt}(s'_{i,j}) v'_{i,j}>w_1.$  
Therefore we get

\begin{equation}\label{eq_pres2}
     \tilde f = (\alpha_1 + \cdots + \alpha_\ell) u_1 \tilde s_1 v_1 + \sum_{j=1}^{\ell}\sum_{j=1}^{m_i} \alpha_i \alpha'_{i,j}  u'_{i,j} \tilde s'_{i,j} v'_{i,j} + \sum_{i=\ell+1}^n \alpha_i u_i \tilde s_iv_i. 
\end{equation}

We claim that $\alpha_1 + \cdots + \alpha_\ell \ne 0.$ Because if  $\alpha_1 + \cdots + \alpha_\ell = 0,$ we obtain a new presentation \eqref{eq_pres2} of the form \eqref{eq_pres_f} with a smaller $w_1,$ which contradicts to our choice of presentation \eqref{eq_pres_f}. Then $\alpha_1 + \cdots + \alpha_\ell \ne 0.$ It follows that ${\sf mt}(f)={\sf mt}(u_1s_1v_1)=w_1\in W{\sf mt}(S)W.$

$(3)\Rightarrow (2).$ Obvious. 

$(4) \Rightarrow (3).$ Take $f\in \overline{(S)}\setminus 0$ and $w\in W$ such that $w<\lll(f).$ By Lemma \ref{lemma_replasement_fg}, there exists $g$ such that $f\equiv g\mod (S,w)$ and ${\rm supp}(g)\subseteq \beta(S).$ Since $g\in \overline{(S)}$ and $(\varphi(w))_{w\in \beta(S)}$ is an topological basis of $\Bbbk\lb X \rb,$ we obtain $g=0.$

$(1) \Rightarrow (4).$ Set $A= \Bbbk \lb X \rb/ \overline{(S)}$ and prove that $(\varphi(w))_{w\in \beta(S)}$ is a topological basis of $A.$ By \Cref{lemma_replasement_fg}, we obtain that any element of $A$ can be presented as a topological linear combination of the family $(\varphi(w))_{w\in \beta(S)}.$ Prove that it is unique. It is enough to prove that if $\sum_{w\in \beta(W)} \alpha_w \varphi(w)=0,$ then $\alpha_w=0$ for any $w\in \beta(S).$ Note that by \Cref{lemma_top_sum}, any family of the form $(\alpha_ww)_{w\in \beta(S)}$ is summable in $\Bbbk \lb X \rb$. If $\sum_{w\in \beta(W)} \alpha_w \varphi(w)=0,$ then $f:=\sum_{w\in \beta(W)} \alpha_w w\in \overline{(S)}.$ Since $S$ is a Gr\"obner-Shirshov basis, we see that $\lll(f)\in W\lll(s)W.$ However ${\rm supp}(f)\subseteq \beta(S).$ This is a contradiction. 
\end{proof}

\subsection{$\theta$-lex orders on $W(X)$}

In this subsection we present a way to construct admissible orders on $W(X),$ which generalise the deg-lex order.

We denote by $<_{\sf lex}$ the lexicographical order on $W(X).$
Note that $<_{\sf lex}$ is not admissible because $1<_{\sf lex}x_1$ but $x_2 >_{\sf lex} x_1x_2.$ However, the lexicographical order is not far away from being admissible because it satisfies the following useful property. If $w_1\ne 1,$ then $w_1<_{\sf lex} w_2$ implies $uw_1 v<_{\sf lex} u w_2 v$ for any $u,v\in W(X).$ 

\begin{definition}[$\theta$-lex order] Let $P$ be a poset and let $\theta:W(X)\to P$ be a map. Then the $\theta$-lex order $<_\theta$ on $W(X)$ is defined as follows.  The inequality  $u<_\theta v$ holds if either $\theta(u)<\theta(v);$ or $\theta(u)=\theta(v)$ and $u<_{\sf lex} v.$ It is easy to check that $<_\theta$ is an order. 
\end{definition}

For example, the classical deg-lex order is the $\theta$-lex order, where $\theta={\sf deg}:W(X) \to \mathbb N$ is the degree function. The lexicographical order is the  $\theta$-lex order, where $\theta:W(X)\to \{*\}$ is the map to a one-element set. 

\begin{lemma} Let $M$ be a monoid with an admissible order and let $\theta:W(X)\to M$ be a homomorphism such that $w\ne 1$ implies $\theta(w)\ne 1.$ Then the  $\theta$-lex order is admissible. 
Moreover, if $M=\mathbb N,$ then the $\theta$-lex order is an $\mathbb N$-order.
\end{lemma}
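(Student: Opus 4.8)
The plan is to verify the two defining properties of an admissible order (and the $\mathbb N$-order property) for $<_\theta$ directly, using the hypotheses on $\theta$ together with the known properties of $<_{\sf lex}$ recorded just before the statement. Throughout I will freely use that $\theta$ is a monoid homomorphism, that $M$ carries an admissible order, and the fact stated in the excerpt that for $w_1 \ne 1$ the inequality $w_1 <_{\sf lex} w_2$ implies $u w_1 v <_{\sf lex} u w_2 v$ for all $u,v \in W(X)$.

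First I would check that $<_\theta$ is total. Given distinct $u,v \in W(X)$, either $\theta(u) \ne \theta(v)$, in which case one of $\theta(u) < \theta(v)$ or $\theta(v) < \theta(u)$ holds since the order on $M$ is total, or $\theta(u) = \theta(v)$, in which case $u \ne v$ forces $u <_{\sf lex} v$ or $v <_{\sf lex} u$ since $<_{\sf lex}$ is total on $W(X)$. Next, the condition $1 <_\theta w$ for $w \ne 1$: since $w \ne 1$ implies $\theta(w) \ne 1$, and $1 < \theta(w)$ in $M$ by admissibility of the order on $M$ (here $1 = \theta(1)$ is the identity of $M$), we get $\theta(1) < \theta(w)$ and hence $1 <_\theta w$.

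The main point is translation invariance: $m_1 <_\theta m_2 \Rightarrow u m_1 v <_\theta u m_2 v$. I would split into the two cases of the definition. If $\theta(m_1) < \theta(m_2)$, then applying the homomorphism property $\theta(u m_i v) = \theta(u)\theta(m_i)\theta(v)$ and the admissibility of the order on $M$ gives $\theta(u m_1 v) < \theta(u m_2 v)$, hence $u m_1 v <_\theta u m_2 v$. If instead $\theta(m_1) = \theta(m_2)$ and $m_1 <_{\sf lex} m_2$, then $\theta(u m_1 v) = \theta(u m_2 v)$, so I need $u m_1 v <_{\sf lex} u m_2 v$; this is exactly the quoted property of the lexicographic order, valid because $m_1 \ne 1$ (indeed $m_1 <_{\sf lex} m_2$ with $1 <_{\sf lex}$ everything forces $m_1 \ne 1$ — or more simply $m_1 <_{\sf lex} m_2$ already precludes $m_1 = 1$ only if... in any case $m_1 \ne 1$ because $1 <_{\sf lex} m_2$ would not help; but if $m_1 = 1$ then $\theta(m_1) = 1 \ne \theta(m_2)$ since $m_2 \ne 1$, contradicting $\theta(m_1) = \theta(m_2)$, so we are safe). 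Either way, $u m_1 v <_\theta u m_2 v$. One should also note $<_\theta$ is a strict partial order (antisymmetry and transitivity), which is routine from the lexicographic description given in the definition and is asserted there.

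Finally, for the $\mathbb N$-order claim when $M = \mathbb N$: I would show every subset bounded above in $<_\theta$ is finite. Suppose $Y \subseteq W(X)$ and $Y <_\theta w_0$ for all elements; then $\theta(Y)$ is bounded above in $\mathbb N$ by $\theta(w_0)$, hence $\theta(Y) \subseteq \{0,1,\dots,\theta(w_0)\}$ is finite, and each fibre $\theta^{-1}(n) \cap Y$ injects into a lexicographically bounded set. Here I use that $\theta$ is a homomorphism $W(X) \to \mathbb N$ with $\theta(w) \ne 0$ for $w \ne 1$, so $\theta(w) \geq |w|$ (since $\theta$ of a letter is $\geq 1$ and $\theta$ is additive on concatenation), which means $\theta(w) \leq \theta(w_0)$ bounds the length of $w$; there are only finitely many words of bounded length over a finite alphabet, so $Y$ is finite. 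The main obstacle — and it is mild — is keeping the case analysis in the translation-invariance step clean, in particular handling the degenerate possibility $m_1 = 1$ inside the lexicographic case, which is ruled out exactly by the hypothesis $w \ne 1 \Rightarrow \theta(w) \ne 1$.
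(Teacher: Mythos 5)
Your proof is correct and takes essentially the same route as the paper's: the same two-case analysis for translation invariance, with the same observation that $w_1=1$ is impossible in the lexicographic case because it would force $\theta(w_1)=1=\theta(w_2)$ against the hypothesis, and the same finiteness argument for the $\mathbb N$-order claim via $\theta(w)\ge |w|$ (equivalently, only finitely many words satisfy $\theta(w)\le n$ over a finite alphabet). The brief aside about fibres injecting into lexicographically bounded sets is unnecessary, but you discard it in favour of the length bound, so nothing is lost.
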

\begin{proof} It is easy to see that the order is total.
For any $w\ne 1$ we have  $1<\theta(w),$ and hence $1<_\theta w.$ Let $w_1<_\theta w_2$ and $u,v\in W(X)$. Consider two cases:  $\theta(w_1)<\theta(w_1)$ and  $\theta(w_1)=\theta(w_2).$ If $\theta(w_1)<\theta(w_1),$ then $\theta(u)\theta(w_1)\theta(v)<\theta(u)\theta(w_2)\theta(v)$ and hence $uw_1v<_\theta uw_2v.$ If $\theta(w_1)=\theta(w_2),$ then $w_1<_{\sf lex}w_2.$ Note that $w_1\ne 1$ because $w_1=1$ would imply  $1=\theta(w_1)=\theta(w_2)$ and $w_2=1.$ Hence $w_1,w_2\in W(X) \setminus \{1\}.$ Then $uw_1v<_{\sf lex}uw_2v$ and $\theta(uw_1v)=\theta(uw_2v).$ It follows that $uw_1v<_\theta uw_2v.$

Assume that $M=\mathbb N.$ Since $\theta(x_i)>0$ for any $x_i\in X$ and $X$ is finite, for any $n\in \mathbb N$ the set of all words $w$ with $\theta(w)\leq n$ is finite. Then any bounded above set in $W(X)$ is finite. 
\end{proof}

\subsection{Counterexamples to possible generalisations of  CD-lemma}

Here we show that it is impossible to replace an $\mathbb N$-order by a well-order in \Cref{CDA} and it is impossible to replace topological bases by $I$-adic bases.

\begin{example}
Let $X=\{x_1,x_2,x_3,x_4\}.$  Consider a homomorphism $f:W(X)\to \mathbb N^4$ given by $f(x_i)=(0,\dots, 0,1,0,\dots,0),$ where $1$ is placed in $i$th position. We consider $\mathbb N^4$ as an ordered monoid with the lexicographical order. Then $W(X)$ is considered with the $\theta$-lex order.  It is easy to check that this order is an admissible well-order on $W(X).$ Consider  relations $s_n=x_2^nx_3^nx_4^n - x_1$ and a set
$S=\{s_n \mid n\geq 1 \}.$ Then $\lll(s_n)=x_2^nx_3^nx_4^n.$ Note that $S$ has no compositions and hence satisfy $(2)$ of \Cref{CDA}. Since $x_1=s_n+x_2^nx_3^nx_4^n,$ we obtain $x_1\in (S)+I^{3n},$ and hence $x_1\in \overline{(S)}.$ On the other hand $x_1\in \beta(S).$ Then $(\varphi(w))_{w\in \beta(S)}$ is not a topological basis of $\Bbbk\lb X \rb/\overline{(S)},$  then \Cref{CDA} fails because of (4). 
Therefore \Cref{CDA} has no a straight generalisation to the case of admissible well-orders.
\end{example}

\begin{example}
Let $X=\{x_1,x_2,x_3,x_4\}.$  Consider $\theta:W(X)\to \mathbb N$ such that $\theta(x_1)=\theta(x_2)=1$ and $\theta(x_3)=\theta(x_4)=3.$ Take $s=x_1x_2+x_3+x_4$ and $S=\{s\}.$ Then $\lll(s)=x_1x_2$ and $S$ is a Gr\"obner-Shirshov basis in $\Bbbk\lb X\rb$. Then $(\varphi(w))_{w\in \beta(S)}$ is a topological basis. However, we claim that it is not an $I$-basis. Indeed, $x_3,x_4\in \beta(S)$ but their images in $A/I^2$ are linearly dependent.
\end{example}

\section{Residually nilpotent augmeted algebras}

An augmented algebra $A$ is called {\it residually nilpotent} if $\bigcap_{n\geq 0} I(A)^n=0.$ In other words, $A$ is residually nilpotent if and only if the map to the completion $A\to \hat A$ is injective. This section is devoted to a theorem, that is based on the CD-lemma, gives a method to find out whether an augmented algebra is residually nilpotent.

In this section we will strictly distinguish between non-commutative polynomials  $p\in \Bbbk \langle X \rangle$ and  corresponding power series which will be denoted by $\iota(p)\in \Bbbk\lb X \rb.$ Polynomials are denoted by $p,q\dots \in\Bbbk \langle x\rangle$ and power series are denoted by $f,g,\dots \in \Bbbk \lb  X\rb.$
So we have an embedding
$$\iota: \Bbbk \langle X \rangle \longrightarrow \Bbbk \lb X \rb.$$

\begin{lemma}\label{lemma_presentation_completion}
Let $X$ be a finite set,  $R\subset  I( \Bbbk \langle X \rangle),$ and $A= \Bbbk \langle X \rangle/(R).$ Then $$\hat A\cong \Bbbk \lb X\rb /\overline{(\iota(R))}.$$
\end{lemma}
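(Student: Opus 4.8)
The plan is to realize the asserted isomorphism as the completion of the canonical projection $\pi\colon \Bbbk\langle X\rangle\twoheadrightarrow A=\Bbbk\langle X\rangle/(R)$ and then to identify its kernel with $\overline{(\iota(R))}$. Since $\Bbbk\lb X\rb$ is the completion of $\Bbbk\langle X\rangle$, completing $\pi$ produces a morphism $\hat\pi\colon\Bbbk\lb X\rb\to\hat A$. As $\pi$ is surjective, the induced map $I(\Bbbk\langle X\rangle)/I(\Bbbk\langle X\rangle)^2\to I(A)/I(A)^2$ is surjective, so $\hat\pi$ is surjective by \Cref{lemma_surjections}. Hence it suffices to prove $\ker(\hat\pi)=\overline{(\iota(R))}$, after which $\hat\pi$ descends to the desired isomorphism.

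To compute the kernel I would describe $\hat\pi$ one component at a time. Because $\pi$ is onto, $I(A)^n=\pi(I(\Bbbk\langle X\rangle)^n)$, so $A/I(A)^n\cong\Bbbk\langle X\rangle/\big((R)+I(\Bbbk\langle X\rangle)^n\big)$; and by \Cref{lemma_power_ideal} the projection $p_n\colon\Bbbk\lb X\rb\to\Bbbk\langle X\rangle/I(\Bbbk\langle X\rangle)^n$ has kernel $I(\Bbbk\lb X\rb)^n$. Composing, $\hat\pi$ has $n$-th component $q_n\colon\Bbbk\lb X\rb\to\Bbbk\langle X\rangle/\big((R)+I(\Bbbk\langle X\rangle)^n\big)$, and $\ker(\hat\pi)=\bigcap_n\ker(q_n)$.

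The heart of the argument is the claim $\ker(q_n)=(\iota(R))+I(\Bbbk\lb X\rb)^n$, where $(\iota(R))$ denotes the ordinary (not necessarily closed) ideal generated by $\iota(R)$. The inclusion ``$\supseteq$'' is immediate since $p_n$ is an algebra homomorphism with $p_n(\iota(r))\equiv r\bmod I(\Bbbk\langle X\rangle)^n$, so $q_n$ kills both $(\iota(R))$ and $I(\Bbbk\lb X\rb)^n$. For ``$\subseteq$'' I would split a series $f=\sum_w\alpha_w w$ as $f=f_{<n}+f_{\ge n}$ with $f_{<n}=\sum_{w\in W_{<n}(X)}\alpha_w w\in\Bbbk\langle X\rangle$ and $f_{\ge n}\in I(\Bbbk\lb X\rb)^n$ (again \Cref{lemma_power_ideal}); then $p_n(f)=f_{<n}\bmod I(\Bbbk\langle X\rangle)^n$, so $q_n(f)=0$ forces $f_{<n}=r+g$ with $r\in(R)$ and $g\in I(\Bbbk\langle X\rangle)^n$, and applying $\iota$ gives $f=\iota(r)+\iota(g)+f_{\ge n}\in(\iota(R))+I(\Bbbk\lb X\rb)^n$.

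Finally I would intersect over $n$: by the closure formula $\overline{\bb}=\bigcap_n(\bb+I(\Bbbk\lb X\rb)^n)$ recalled in the preliminaries, $\bigcap_n\big((\iota(R))+I(\Bbbk\lb X\rb)^n\big)$ is exactly the topological closure of the ideal $(\iota(R))$, which—being a closed ideal containing $\iota(R)$ and contained in every such ideal—is the object denoted $\overline{(\iota(R))}$. Thus $\ker(\hat\pi)=\overline{(\iota(R))}$ and the proof concludes. The only genuinely delicate step is this kernel computation; everything else is formal, and the two points to keep straight there are the identification $\Bbbk\lb X\rb/I(\Bbbk\lb X\rb)^n\cong\Bbbk\langle X\rangle/I(\Bbbk\langle X\rangle)^n$ and the fact that the paper's notation $\overline{(S)}$ coincides with the topological closure of the ordinary ideal $(S)$.
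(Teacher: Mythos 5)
Your proof is correct: the kernel computation $\ker(q_n)=(\iota(R))+I(\Bbbk\lb X\rb)^n$ is valid (both inclusions are argued properly, using \Cref{lemma_power_ideal} for the splitting $f=f_{<n}+f_{\ge n}$), the identification $\ker(\hat\pi)=\bigcap_n\ker(q_n)$ is right because $\hat A$ embeds in $\prod_n A/I(A)^n$, and the final step correctly matches the topological closure of the ordinary ideal $(\iota(R))$ with the closed ideal $\overline{(\iota(R))}$ via the formula $\overline{\bb}=\bigcap_n(\bb+\aa^n)$ from the preliminaries. The computational heart is the same as in the paper --- the paper likewise starts from $A/I^n\cong\Bbbk\lb X\rb/((\iota(R))+I^n)$ (stated without the detail you supply) and the observation $(\iota(R))+I^n=\overline{(\iota(R))}+I^n$ --- but the assembly differs: the paper writes $\hat A\cong\varprojlim\Bbbk\lb X\rb/(\overline{(\iota(R))}+I^n)$ and concludes by quoting \Cref{lemma_complete_quo}, i.e.\ that the quotient of a complete algebra by a closed ideal is complete, so the inverse limit is already $\Bbbk\lb X\rb/\overline{(\iota(R))}$; you instead complete the projection $\pi$, use \Cref{lemma_surjections} to get surjectivity of $\hat\pi$, and finish with the first isomorphism theorem and the kernel computation, never invoking \Cref{lemma_complete_quo}. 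Your route is slightly longer but more self-contained at the delicate point (it actually proves the mod-$I^n$ identification rather than asserting it), while the paper's is shorter because completeness of the quotient does the limit-identification in one stroke; both ultimately rest on \Cref{lemma_power_ideal} and the closure formula.
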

\begin{proof}
Since $\iota$ induces an isomorphism $\Bbbk\langle X \rangle/I^n\cong  \Bbbk\lb X \rb/I^n$ (\Cref{lemma_power_ideal}), we
have $A/I^n\cong \Bbbk\lb X \rb/((\iota (R))+I^n).$  Note that $(\iota (R))+I^n=\overline{(\iota (R))}+I^n.$
Using that $\Bbbk \lb X \rb/\overline{(\iota(R))}$ is 
complete 
(\Cref{lemma_complete_quo}), we obtain 
$\hat A\cong \varprojlim \Bbbk\lb X \rb/(\overline{(\iota (R))}+I^n) \cong  \Bbbk \lb X \rb/\overline{(\iota(R))}.$
\end{proof}

We will use both the classical theory of Gr\"obner-Shirshov bases for non-commutative polynomials $\Bbbk\langle X \rangle$ and our theory of Gr\"obner-Shirshov bases for $\Bbbk \lb X \rb .$ Let us remind the definition for non-commutative polynomials and introduce some notation.

Assume that the free monoid $W(X)$ is endowed by some admissible order $\leq$. Then for a polynomial $p\in \Bbbk \langle X \rangle$ we denote by ${\sf MT}_{\leq}(p)$ the {\it maximal term} of $p.$
$${\sf MT}_{\leq}(p)={\rm max}({\rm supp}(p)), \hspace{1cm} p\in \Bbbk\langle X \rangle.$$ A polynomial is $p$ called {\it monic} if the coefficient of ${\sf MT}_{\leq}(f)$ in $f$ is $1.$ We say that a subset $R \subset \Bbbk \langle X \rangle$ is
a Gr\"obner-Shirshov basis with respect to $\leq $ if it consists of monic polynomials and $W(X) {\sf MT}_{\leq}(R)W(X)={\sf MT}_{\leq}((R)),$ where $(R)$ is the ideal generated by $R.$

Now assume that $\preccurlyeq$ is an admissible $\mathbb N$-order on $W(X).$
For a power series $f$, as before, we denote by  ${\sf mt}_{\preccurlyeq}(g)$ the {\it least} monomial of $f$ 
$${\sf mt}_{\preccurlyeq}(f)=\min ({\rm supp}(f)), \hspace{1cm} f\in \Bbbk \lb X \rb.$$
As before, a power series $f$ is called monic if the coefficient of ${\sf mt}_{\preccurlyeq}(f)$ in $f$ is $1;$ and we say that $S\subset \Bbbk \lb X \rb$ is Gr\"obner-Shirshov basis with respect to $\preccurlyeq$, if $W(X){\sf mt}_{\preccurlyeq}(S)W(X)={\sf mt}(\overline{(S)}),$ where $\overline{(S)}$ is a closed ideal generated by $S.$

\begin{theorem}\label{the_main_theorem} Let $X$ be a finite set,  $\leq$ be an admissible order on $W(X)$ and $\preccurlyeq$ is an admissible $\mathbb N$-order on $W(X).$ Assume that $R\subset I(\Bbbk \langle X \rangle )$ is a Gr\"obner-Shirshov basis 
 in $\Bbbk \langle X \rangle $ with respect to $\leq$ and $S\subset I(  \Bbbk \lb X \rb)$ is a Gr\"obner-Shirshov basis in $ \Bbbk \lb X \rb$ with respect to $\preccurlyeq$ such that 
 \begin{itemize}
     \item $\overline{(\iota(R) )}=\overline{(S)};$
     \item ${\sf MT}_{\leq }(R)={\sf mt}_{\preccurlyeq}(S).$
 \end{itemize}
 Then $\Bbbk \langle X \rangle /(R)$ is residually nilpotent. 
\end{theorem}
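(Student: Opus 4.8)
The plan is to reduce the statement to a comparison of two bases indexed by the same set of words: a $\Bbbk$-basis of $A := \Bbbk\langle X\rangle/(R)$ coming from the classical Gr\"obner--Shirshov theory for associative algebras, and a topological basis of $\hat A$ coming from \Cref{CDA}. The two hypotheses on $R$ and $S$ are precisely what forces these two index sets to coincide.

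First I would identify the completion. By \Cref{lemma_presentation_completion} together with the hypothesis $\overline{(\iota(R))}=\overline{(S)}$ we get $\hat A\cong\Bbbk\lb X\rb/\overline{(S)}$, and under this identification the canonical map $A\to\hat A$ becomes the map $\bar\iota\colon A\to\Bbbk\lb X\rb/\overline{(S)}$ induced by $\iota\colon\Bbbk\langle X\rangle\hookrightarrow\Bbbk\lb X\rb$. Writing $\varphi\colon\Bbbk\langle X\rangle\to A$ and $\psi\colon\Bbbk\lb X\rb\to\Bbbk\lb X\rb/\overline{(S)}$ for the projections, we have $\bar\iota(\varphi(w))=\psi(w)$ for every $w\in W(X)$. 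Since $A$ is residually nilpotent if and only if the map $A\to\hat A$ is injective, it suffices to show that $\bar\iota$ is injective.

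Next I would produce the two bases. Put $N:=W(X)\setminus W(X)\,{\sf MT}_{\leq}(R)\,W(X)$. Since $R$ is a Gr\"obner--Shirshov basis with respect to $\leq$, the classical Composition--Diamond lemma (\cite{BC14}, \cite{Mo94}) shows that $(\varphi(w))_{w\in N}$ is a $\Bbbk$-basis of $A$; linear independence here is essentially the definition of a Gr\"obner--Shirshov basis, since a nonzero element of $(R)$ has its $\leq$-maximal term in $W(X)\,{\sf MT}_{\leq}(R)\,W(X)$, hence not in $N$. On the other hand, since $S$ is a Gr\"obner--Shirshov basis in $\Bbbk\lb X\rb$ with respect to $\preccurlyeq$, part~(4) of \Cref{CDA} shows that $(\psi(w))_{w\in\beta(S)}$ is a topological basis of $\Bbbk\lb X\rb/\overline{(S)}$, where $\beta(S)=W(X)\setminus W(X)\,{\sf mt}_{\preccurlyeq}(S)\,W(X)$. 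The hypothesis ${\sf MT}_{\leq}(R)={\sf mt}_{\preccurlyeq}(S)$ now says that these two sets of leading monomials coincide, so the two-sided monoid ideals they generate coincide and hence $N=\beta(S)$.

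Finally I would conclude. Let $a\in\ker\bar\iota$. Since $(\varphi(w))_{w\in N}$ is a $\Bbbk$-basis of $A$, write $a=\sum_{w\in N}\alpha_w\varphi(w)$ with only finitely many $\alpha_w$ nonzero. Then $0=\bar\iota(a)=\sum_{w\in N}\alpha_w\psi(w)$ in $\Bbbk\lb X\rb/\overline{(S)}$. A finite sum is in particular a topological sum, so this presents $0$ as a topological linear combination of the topological basis $(\psi(w))_{w\in\beta(S)}=(\psi(w))_{w\in N}$; by the uniqueness built into the notion of topological basis, $\alpha_w=0$ for all $w$, so $a=0$. Hence $\bar\iota$ is injective and $A$ is residually nilpotent. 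I expect the main points to be the identification $N=\beta(S)$, where the hypothesis ${\sf MT}_{\leq}(R)={\sf mt}_{\preccurlyeq}(S)$ enters, and the observation that a finite linear combination in $A$ becomes a topological linear combination in $\hat A$, which is what lets the uniqueness clause of \Cref{CDA}(4) apply; everything else is bookkeeping with results already established in the paper.
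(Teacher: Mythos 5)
Your proposal is correct and follows essentially the same route as the paper's proof: identify $\hat A\cong \Bbbk\lb X\rb/\overline{(S)}$ via \Cref{lemma_presentation_completion}, use the classical Composition--Diamond lemma for $\Bbbk\langle X\rangle$ and \Cref{CDA}(4) to get a basis of $A$ and a topological basis of $\hat A$ indexed by the same set of words (equal by the hypothesis ${\sf MT}_{\leq}(R)={\sf mt}_{\preccurlyeq}(S)$), and conclude injectivity of $A\to\hat A$. The only difference is that you spell out the final injectivity argument (a finite linear combination is a topological one, so uniqueness applies), which the paper leaves as ``the statement follows.''
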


\begin{proof} Set $A=\Bbbk \langle X \rangle/(R).$ Then, by \Cref{lemma_presentation_completion}, we have $\hat A\cong \Bbbk \lb X \rb/\overline{(\iota(R))}=\Bbbk \lb X \rb/\overline{(S)} .$ 
Consider the projections $\psi:\Bbbk \langle X \rangle \to A$ and $\varphi: \Bbbk \lb X \rb \to \hat A.$ By the classical Composition--Diamond for $\Bbbk \langle X \rangle$, the family  $( \psi(w) )_{w\in  W\setminus W{\sf MT}_{\leq }(R)W}$ is a basis of $A$. By \Cref{CDA}, the family $( \psi(w) )_{w\in  W\setminus W{\sf mt}_{\preccurlyeq}(S)W}$ is a topological basis of $\hat A.$ Note that $W{\sf mt}_{\preccurlyeq}(S)W=W{\sf MT}_{\leq}(R)W.$ The statement follows.
\end{proof}

\begin{example} The fact that  the  commutative polynomial algebra $$A=\Bbbk[x,y]=\Bbbk \langle x,y\mid xy-yx\rangle$$
is residually nilpotent is obvious but we to want show how to prove this by using \Cref{the_main_theorem}. Take $R=\{xy-yx\}$ and $S=\iota(R).$ Define an order $\leq$ as the deg-lex order corresponding to the order  $x>y$ on letters and $\preccurlyeq$  deg-lex order corresponding to $y>x.$ Then ${\sf MT}_{\leq}(xy-yx)=xy={\sf mt}_{\preccurlyeq}(xy-yx).$ It is easy to see that $R$ and $S$ are Gr\"obner-Shirshov bases in appropriate senses and the assumption of the theorem is satisfied.
\end{example}

\begin{example}\label{ex_r.n} Let $X=\{x_1,x_2,x_3\},$ $r=x_1x_2 +  x_3^2 - x_3$ and $R=\{r \}.$ Consider two orders on $W(x_1,x_2,x_3):$ the deg-lex order $\leq$ corresponding to the order $x_1>x_2>x_3$ and the $\theta$-lex order $\preccurlyeq=\leq_f$ corresponding to the same order on letters and $\theta(x_1)=\theta(x_2)=1 , \theta(x_3)=3.$ Then ${\sf MT}_{\leq }(r)=x_1x_2={\sf mt}_{\preccurlyeq}(r).$  Note that $r$ has no self-compositions and hence $R$ and $\iota(R)$ are Gr\"obner-Shirshov bases. Therefore 
$$A=\Bbbk \langle x_1,x_2,x_3 \mid x_3=x_1x_2+x_3^2 \rangle$$
is residually nilpotent.
\end{example}

This example can be generalized as follows.

\begin{example}
 Let $Z = X \cup Y$, $X = \{x_1,\ldots, x_n\}$ and $Y =\{y_1, \ldots, y_m\}$. Let $u$ be an arbitrary element of $W(X)$ such that  $u$ cannot be decomposed as follows $u= u'u''u'$, with $u' \in W(X)\setminus \{1\}$, $u''\in W(X).$

Let us consider the deg-lext order $\le$ on $W(Z)$ corresponding to the order $x_1>x_2>\cdots > x_n >y_1>y_2>\cdots y_m$ on the letters.
 
Next, let us choose a polynomial $\varphi \in \Bbbk\langle Y \rangle$ such that $v \le u$ (in $W(Z)$) for any monomial $v$ of $\varphi$.
 
Consider now an $\theta$-lex order $\le_\theta$ corresponding the the same order on the letters, $\theta(x_1) = \cdots = \theta(x_n)=1$, and for any $1 \le i \le m$ we put $\theta(y_i) := n_i \in \mathbb{N}$ with $n_i>|u|$.
 
It is clear that ${\sf MT}_\le(r) = {\sf mt}_{\le_\theta}(r)$, for $r = u-\varphi$ and by the construction of $u$ and $\varphi$ it follows that $r$ has no self-compositions, and thus $R$ and $\iota(R)$ are Gr\"obner-Shirshov bases, where $R:=\{r\}.$
 
It implies that an algebra
 \[
  A = \Bbbk \langle x_1,\ldots, x_n, y_1,\ldots, y_m \, | \, u = \varphi \rangle
 \]
  is residually nilpotent.
\end{example}

\section{$2$-acyclic morphisms}

\subsection{Homology of augmented algebras}

The homology of an augmented algebra $A$ with coefficients in an $A$-module $M$  is 
$$H_*(A,M)={\rm Tor}_*^A(M,\Bbbk),$$
and we set $H_*(A):=H_*(A,\Bbbk).$
It is well known that there is an exact sequence
$$0 \longrightarrow H_2(A) \longrightarrow I(A)\otimes_A I(A) \longrightarrow I(A) \longrightarrow H_1(A) \longrightarrow 0,$$
where the middle map is induced by the product. In particular, $$H_1(A)=I(A)/I(A)^2.$$

\begin{lemma}[Hopf's formula]\label{hopf's}
Let $\varphi: A\to B$ be a surjective homomorphism of augmented algebras. Set $\mathfrak{a}={\rm Ker}(\varphi)$ and $I=I(A).$ Then there is an exact sequence
$$H_2(A)\longrightarrow H_2(B) \longrightarrow \frac{\aa\cap I^2   }{I\aa + \aa I} \longrightarrow 0.$$
\end{lemma}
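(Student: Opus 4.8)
The plan is to derive the sequence by comparing the four-term exact sequence
$$0\longrightarrow H_2(A)\longrightarrow I(A)\otimes_A I(A)\xrightarrow{\ \mu\ } I(A)\longrightarrow H_1(A)\longrightarrow 0$$
recalled just above (applied to $A$ and to $B$) by means of a single application of the snake lemma. Throughout write $I=I(A)$. Since $\varepsilon_A=\varepsilon_B\varphi$, every element of $\aa={\rm Ker}(\varphi)$ is killed by $\varepsilon_A$, so $\aa\subseteq I$; hence $I(B)=I/\aa$ and $I(B)^2=(I^2+\aa)/\aa$, and moreover $I\aa+\aa I\subseteq\aa\cap I^2$, so the quotient in the statement is meaningful. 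Because the image of $\mu$ is $I^2$, the recalled sequence identifies $H_2(A)$ with $\ker\!\big(I\otimes_A I\xrightarrow{\mu}I^2\big)$; applying it to $B$ identifies $H_2(B)$ with $\ker\!\big(I(B)\otimes_B I(B)\xrightarrow{\mu_B}I(B)^2\big)$.

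Next I would set up the morphism of short exact sequences
$$
\begin{tikzcd}
0\arrow{r} & H_2(A)\arrow{r}\arrow{d}{u} & I\otimes_A I\arrow{r}{\mu}\arrow{d}{\pi} & I^2\arrow{r}\arrow{d}{v} & 0\\
0\arrow{r} & H_2(B)\arrow{r} & I(B)\otimes_B I(B)\arrow{r}{\mu_B} & I(B)^2\arrow{r} & 0
\end{tikzcd}
$$
in which $\pi$ is induced by $\varphi$, $v$ is induced by the projection $I\twoheadrightarrow I(B)$, and $u$ is the functorial map on $H_2$; by naturality of the recalled exact sequence, $u$ is simply the restriction of $\pi$. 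The maps $\pi$ and $v$ are surjective, and $\ker v=\aa\cap I^2$.

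The one computation requiring care is the identification of $\ker\pi$. Factor $\pi$ as $I\otimes_A I\to I\otimes_A I(B)\to I(B)\otimes_A I(B)\xrightarrow{\ \cong\ }I(B)\otimes_B I(B)$, the last map being an isomorphism since $I(B)$ is already a $B$-module. Applying right-exactness of $-\otimes_A-$ in each variable to $0\to\aa\to I\to I(B)\to 0$ and performing a short diagram chase shows
$$\ker\pi=\operatorname{im}\!\big(\aa\otimes_A I\to I\otimes_A I\big)+\operatorname{im}\!\big(I\otimes_A\aa\to I\otimes_A I\big).$$
Applying $\mu$ (the multiplication map) then gives $\mu(\ker\pi)=\aa I+I\aa$.

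Finally I would run the snake lemma on the displayed diagram. As $\operatorname{coker}\pi=\operatorname{coker}v=0$, its exact sequence collapses to
$$\ker\pi\xrightarrow{\ \mu\ }\aa\cap I^2\longrightarrow\operatorname{coker}(u)\longrightarrow 0,$$
so $\operatorname{coker}(u)\cong(\aa\cap I^2)/(\aa I+I\aa)$; explicitly, the induced map $H_2(B)\twoheadrightarrow\operatorname{coker}(u)\cong(\aa\cap I^2)/(I\aa+\aa I)$ sends a class, lifted along $\pi$ to some $\xi\in I\otimes_A I$, to $\mu(\xi)\bmod(I\aa+\aa I)$. Exactness of $H_2(A)\xrightarrow{u}H_2(B)\to(\aa\cap I^2)/(I\aa+\aa I)\to 0$ is then immediate from the definition of cokernel. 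The only genuinely nontrivial step is the description of $\ker\pi$ above; everything else is bookkeeping with the snake lemma and the elementary identities for $I(B)$.
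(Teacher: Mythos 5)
Your proposal is correct and takes essentially the same route as the paper: you apply the snake lemma to the same morphism of short exact sequences $0\to H_2\to I\otimes I\to I^2\to 0$ for $A$ and $B$, using the two identifications the paper invokes, namely ${\rm Ker}(I^2\to I(B)^2)=\aa\cap I^2$ and ${\rm Ker}(\pi)=\operatorname{im}\bigl((I\otimes_A\aa)\oplus(\aa\otimes_A I)\bigr)$, whose image under multiplication is $I\aa+\aa I$. The additional details you give (the diagram chase identifying $\ker\pi$ and the explicit lift-and-multiply description of the induced map) simply spell out what the paper leaves implicit.
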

\begin{proof}
It follows from the snake lemma applied to the following diagram
$$
\begin{tikzcd}
0\arrow{r} & H_2(A)\arrow{r}\arrow{d} & I(A)\otimes_A I(A)\arrow{r}\arrow{d} & I(A)^2\arrow{r}\arrow{d} & 0\\
0\arrow{r} & H_2(B)\arrow{r} & I(B)\otimes_A I(B)\arrow{r} & I(B)^2\arrow{r} & 0
\end{tikzcd}
$$
because ${\rm Ker}(I(A)^2\to I(B)^2)=\aa\cap  I(A)^2$ and the sequence $$(I(A)\otimes_A \aa) \oplus (\aa\otimes_A I(A))\to I(A)\otimes_A I(A) \to I(B)\otimes_A I(B) \to 0 $$
is exact. 
\end{proof}

\begin{corollary}
If $A=\Bbbk \langle X \rangle/\mathfrak{r} $ for some ideal $\mathfrak{r}\subseteq I,$ where $I=I(\Bbbk \langle X \rangle),$ then there is an isomorphism
$$H_2(A)\cong \frac{\mathfrak{r}\cap I^2  }{ I\mathfrak{r} +\mathfrak{r} I}.$$
\end{corollary}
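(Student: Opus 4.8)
The plan is to feed the canonical surjection of augmented algebras $\varphi\colon\Bbbk\langle X\rangle\twoheadrightarrow A=\Bbbk\langle X\rangle/\mathfrak{r}$ into Hopf's formula (\Cref{hopf's}). In the notation of that lemma the source is $\Bbbk\langle X\rangle$, the kernel is $\mathfrak{a}=\mathfrak{r}$, and $I=I(\Bbbk\langle X\rangle)$, so we obtain an exact sequence
$$H_2(\Bbbk\langle X\rangle)\longrightarrow H_2(A)\longrightarrow \frac{\mathfrak{r}\cap I^2}{I\mathfrak{r}+\mathfrak{r}I}\longrightarrow 0.$$
Thus the whole corollary reduces to the single fact that $H_2(\Bbbk\langle X\rangle)=0$: granting this, the kernel of the middle arrow equals the image of $H_2(\Bbbk\langle X\rangle)$ and hence is zero, so the middle arrow is injective; it is already surjective by exactness, so it is an isomorphism, which is exactly the asserted formula for $H_2(A)$.

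It remains to prove $H_2(\Bbbk\langle X\rangle)=0$. The cleanest route is to observe that $I=I(\Bbbk\langle X\rangle)$ is a free left $\Bbbk\langle X\rangle$-module: writing each word of length $\geq 1$ uniquely in the form $w=w'x$ with $x\in X$ its last letter identifies $I=\bigoplus_{x\in X}\Bbbk\langle X\rangle\,x$. Hence the short exact sequence $0\to I\to\Bbbk\langle X\rangle\to\Bbbk\to 0$ is a free resolution of $\Bbbk$ as a left $\Bbbk\langle X\rangle$-module of length one, so $H_n(\Bbbk\langle X\rangle)={\rm Tor}_n^{\Bbbk\langle X\rangle}(\Bbbk,\Bbbk)=0$ for all $n\geq 2$; this argument is valid over an arbitrary commutative ring $\Bbbk$. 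Alternatively, one can argue directly from the exact sequence $0\to H_2(\Bbbk\langle X\rangle)\to I\otimes_{\Bbbk\langle X\rangle}I\xrightarrow{\ \mu\ }I\to H_1(\Bbbk\langle X\rangle)\to 0$ recalled above, where $\mu$ is induced by the product: using also the right-module decomposition $I=\bigoplus_{x\in X}x\,\Bbbk\langle X\rangle$ coming from first letters, one gets $I\otimes_{\Bbbk\langle X\rangle}I\cong\bigoplus_{x,y\in X}x\,\Bbbk\langle X\rangle\,y$, a free $\Bbbk$-module with basis $\{xwy:x,y\in X,\ w\in W(X)\}$, on which $\mu$ acts by $xwy\mapsto xwy\in W(X)$. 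Since a word of length $\geq 2$ determines its first and its last letter, these images are pairwise distinct, so $\mu$ is injective and $H_2(\Bbbk\langle X\rangle)=0$.

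The only non-formal ingredient is this vanishing $H_2(\Bbbk\langle X\rangle)=0$, equivalently the freeness of the augmentation ideal of the free algebra as a one-sided module (equivalently, the injectivity of the multiplication map $I\otimes_{\Bbbk\langle X\rangle}I\to I$); once it is established, the corollary follows immediately from \Cref{hopf's}. The only place that needs a little care is matching the two one-sided decompositions of $I$ correctly when forming $I\otimes_{\Bbbk\langle X\rangle}I$ --- the first-letter decomposition on the left tensor factor and the last-letter decomposition on the right tensor factor --- but this is routine bookkeeping.
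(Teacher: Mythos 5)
Your proposal is correct and matches the paper's (implicit) argument exactly: apply \Cref{hopf's} to the presentation $\Bbbk\langle X\rangle\twoheadrightarrow A$ with kernel $\mathfrak{r}$ and use $H_2(\Bbbk\langle X\rangle)=0$. The only difference is that you spell out the standard vanishing $H_2(\Bbbk\langle X\rangle)=0$ (via freeness of $I(\Bbbk\langle X\rangle)$ as a one-sided module, equivalently injectivity of $I\otimes_{\Bbbk\langle X\rangle}I\to I$), which the paper leaves unstated; your verification of it is sound.
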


\begin{remark}\label{rem_H2}
Let $R\subseteq I=I(\Bbbk \langle X \rangle)$ be a set and 
$A=\Bbbk \langle X \mid R \rangle.$ Then any element of $H_2(A)$ can be presented as a  linear combination of the relations 
$$\sum_{i=1}^n \alpha_i r_i \in (R)\cap I^2, \hspace{1cm} \alpha_i\in \Bbbk, r_i\in R $$
without summands given by products $pr,rp,$ where $r\in R,p\in \Bbbk \langle X \rangle$ because they lie in $(R)I+I(R).$ 

\end{remark}

\subsection{Paraequivalences}
A morphism of augmented algebras $A\to B$ is called {\it para-equivalence} if it induces an isomorphism $A/I(A)^n \cong B/I(B)^n$ for each $n.$ It is easy to see that a paraequivalence $A\to B$ induces an isomorphism of completions $\hat A\cong \hat B.$ If $A$ and $B$ are of finite type, then the opposite also holds by  \Cref{th_completion_is_complete}. 
For example, the map $A\to \hat A$ is a paraequivalence for an augmented algebra of finite type.

\subsection{2-acyclic morphisms}

A morphism of augmented algebras $A\to B$ is called {\it $n$-acyclic} if it induces an isomorphism $H_i(A)\cong H_i(B)$ for $i<n$ and an epimorphism $H_n(A)\twoheadrightarrow H_n(B).$ Here we give an analogue of Stalling's theorem \cite{St65} with a similar proof.

\begin{proposition}[Stalling's theorem]
A $2$-acyclic morphism is a paraequivalence. 
\end{proposition}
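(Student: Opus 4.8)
The plan is to prove the statement by induction on $n$, showing that a $2$-acyclic morphism $\varphi : A \to B$ induces an isomorphism $A/I(A)^n \cong B/I(B)^n$ for every $n$. The base cases $n=1$ and $n=2$ are immediate: for $n=1$ both sides equal $\Bbbk$, and for $n=2$ we have $A/I(A)^2 \cong \Bbbk \oplus H_1(A)$ (using $H_1(A) = I(A)/I(A)^2$ from the exact sequence recalled above), so the hypothesis that $H_1(A) \cong H_1(B)$ gives the claim. For the inductive step I would assume $A/I(A)^n \cong B/I(B)^n$ and try to deduce $A/I(A)^{n+1} \cong B/I(B)^{n+1}$. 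Surjectivity of this map follows already from \Cref{lemma_surjections} (condition (2) of that lemma is $H_1(A) \twoheadrightarrow H_1(B)$, which holds), so the real content is injectivity, equivalently that $K := \ker(A/I(A)^{n+1} \to B/I(B)^{n+1})$ is zero.

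The strategy for killing $K$ is to use the five-term exact sequences in low-degree homology together with Hopf's formula. First I would reduce to the case of a surjection: factor $\varphi$ through $A \twoheadrightarrow A/\ker\varphi$; more conveniently, one argues with the induced surjections on the nilpotent quotients $\bar A_n := A/I(A)^n \twoheadrightarrow \bar B_n := B/I(B)^n$ and notes that, since passing to these quotients does not change $H_1$ or $H_2$ below the relevant degree, one may work entirely inside the category of nilpotent augmented algebras. Concretely, set $C = B/I(B)^{n+1}$ and consider the surjection $A/I(A)^{n+1} \twoheadrightarrow C$ obtained from $\varphi$; write $\aa$ for its kernel, which is exactly $K$ and which is contained in $I(A/I(A)^{n+1})^n$ by the inductive hypothesis. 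Hopf's formula (\Cref{hopf's}) applied to this surjection gives an exact sequence
\begin{equation*}
H_2(A/I(A)^{n+1}) \longrightarrow H_2(C) \longrightarrow \frac{\aa \cap \bar I^2}{\bar I \aa + \aa \bar I} \longrightarrow 0,
\end{equation*}
where $\bar I = I(A/I(A)^{n+1})$. Since $\aa \subseteq \bar I^n \subseteq \bar I^2$, the rightmost term is just $\aa/(\bar I \aa + \aa \bar I)$. The point is now that $H_2(A/I(A)^{n+1}) \to H_2(C)$ is surjective: this should follow from the hypothesis that $H_2(A) \twoheadrightarrow H_2(B)$ together with the fact that the canonical maps $H_2(A) \to H_2(A/I(A)^{n+1})$ and $H_2(B) \to H_2(C)$ are compatible and that, for these particular truncations, the map on $H_2$ out of the truncation is itself surjective (an element of $H_2$ lives in finitely many degrees, so it is detected on a sufficiently large nilpotent quotient). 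Granting this, the five-term sequence forces $\aa/(\bar I\aa + \aa\bar I) = 0$, i.e. $\aa = \bar I \aa + \aa \bar I \subseteq \bar I \aa + \aa \bar I$.

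The final step is a nilpotence argument: from $\aa = \bar I\aa + \aa\bar I$ one iterates, obtaining $\aa \subseteq \sum_{i+j = k} \bar I^i \aa \bar I^j$ for every $k$, and since $\bar I^{n+1} = 0$ in $A/I(A)^{n+1}$, taking $k$ large enough gives $\aa = 0$. Hence $K = 0$, the map $A/I(A)^{n+1} \to B/I(B)^{n+1}$ is an isomorphism, and the induction closes; therefore $\varphi$ is a paraequivalence. I expect the main obstacle to be the middle step — rigorously justifying that $H_2(A/I(A)^{n+1}) \to H_2(B/I(B)^{n+1})$ is surjective from the hypothesis $H_2(A) \twoheadrightarrow H_2(B)$, i.e. controlling how $H_2$ behaves under the truncation functors and making the naturality of the five-term sequences precise; the bookkeeping of which quotient one is working in, and ensuring $\aa$ really sits in the right power of the augmentation ideal so that Hopf's formula simplifies, is where care is needed.
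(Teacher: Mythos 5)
Your overall skeleton (induction on $n$, base cases, surjectivity from \Cref{lemma_surjections}, then killing the kernel via Hopf's formula) is sound, but the step you yourself flag as the obstacle is a genuine gap, and the justification you sketch for it is false. You apply \Cref{hopf's} to the surjection $A/I(A)^{n+1}\twoheadrightarrow C=B/I(B)^{n+1}$ and need $H_2(A/I(A)^{n+1})\to H_2(C)$ to be surjective. Note first that your final nilpotence iteration is vacuous here: since $\aa\subseteq \bar I^{\,n}$ and $\bar I^{\,n+1}=0$, one has $\bar I\aa+\aa\bar I=0$, so Hopf's formula literally says $\aa\cong\operatorname{coker}\bigl(H_2(A/I(A)^{n+1})\to H_2(C)\bigr)$; hence the surjectivity you need is \emph{equivalent} to $\aa=0$, i.e.\ to the statement you are proving at this stage of the induction, and the induction hypothesis (which only controls level $n$) gives no access to it. Your proposed route to it --- that $H_2(A)\to H_2(A/I(A)^{n+1})$ is surjective because ``an element of $H_2$ lives in finitely many degrees'' --- is wrong: applying \Cref{hopf's} to $A\twoheadrightarrow A/I(A)^{n+1}$ shows the cokernel of that map is exactly $I(A)^{n+1}/I(A)^{n+2}$, which is typically nonzero. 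For instance, for the free algebra $A=\Bbbk\langle x\rangle$ one has $H_2(A)=0$ while $H_2\bigl(A/I^{n+1}\bigr)\cong (x^{n+1})/(x^{n+2})\neq 0$.

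The paper avoids this by applying Hopf's formula one level lower, namely to the truncations $A\twoheadrightarrow A/I(A)^{n-1}$ and $B\twoheadrightarrow B/I(B)^{n-1}$, giving exact rows
$$H_2(A)\longrightarrow H_2(A/I(A)^{n-1})\longrightarrow I(A)^{n-1}/I(A)^{n}\longrightarrow 0,$$
and similarly for $B$. Here the middle vertical map is an isomorphism \emph{by the induction hypothesis} (it is $H_2$ applied to the isomorphism $A/I(A)^{n-1}\cong B/I(B)^{n-1}$), the left vertical map is surjective by $2$-acyclicity, and a short diagram chase gives ${\sf Gr}_{n-1}(A)\cong{\sf Gr}_{n-1}(B)$; the five lemma applied to $0\to I^{n-1}/I^{n}\to A/I^{n}\to A/I^{n-1}\to 0$ then closes the induction. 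So the fix is to shift where you apply Hopf's formula so that the middle $H_2$-term is one the induction hypothesis already controls, rather than the $H_2$ of the quotient you are currently trying to compare.
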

\begin{proof} Prove that $A/I(A)^n\to B/I(B)^n$ is an isomorphism. 
The proof is by induction on $n$. For $n=1$ it is obvious. For $n=2$ it is equivalent to the isomorphism $H_1(A)\cong H_1(B).$ Assume $n>2.$ \Cref{hopf's} implies that the rows of the diagram 
$$
\begin{tikzcd}
H_2(A) \arrow{r} \arrow{d} & H_2(A/I(A)^{n-1}) \arrow{r} \arrow{d} & A/I(A)^{n} \arrow{r} \arrow{d} & 0 \\
H_2(B) \arrow{r} & H_2(B/I(B)^{n-1}) \arrow{r} & B/I(B)^{n} \arrow{r} & 0
\end{tikzcd}
$$
are exact. By induction hypothesis, we obtain that the middle vertical map is an isomorphism. Then a simple diagram chasing shows that $A/I(A)^n\to B/I(B)^n$ is surjective.
\end{proof}

\begin{lemma}[$2$-acylic surjections] A surjective
morphism $A\epi B$ is $2$-acylic if and only if its kernel $\aa={\rm Ker}(A\to B)$ satisfies
$\aa\subseteq I^2$ and $\aa=I\aa+\aa I.$
\end{lemma}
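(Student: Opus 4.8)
The plan is to reduce the statement to two facts already available: the identification $H_1(A) = I(A)/I(A)^2$ of the first homology, and Hopf's formula (\Cref{hopf's}). Since $H_0$ of any augmented algebra is $\Bbbk$, a surjection $\varphi: A \epi B$ is $2$-acyclic exactly when it induces an isomorphism on $H_1$ and an epimorphism on $H_2$. Write $I = I(A)$. Note first that $\aa$ is a two-sided ideal of $A$ contained in $I$: it is the kernel of an algebra homomorphism, and $\varepsilon_B \varphi = \varepsilon_A$ forces $\aa = {\rm Ker}(\varphi) \subseteq {\rm Ker}(\varepsilon_A) = I$. Consequently $B \cong A/\aa$, $I(B) = I/\aa$, and $I(B)^2 = (I^2 + \aa)/\aa$.

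First I would treat $H_1$. From the above, $H_1(B) = I(B)/I(B)^2 \cong I/(I^2 + \aa)$, and the map $H_1(A) = I/I^2 \to I/(I^2+\aa)$ induced by $\varphi$ is the canonical projection; it is automatically surjective, with kernel $(I^2+\aa)/I^2$. Hence $H_1(A) \to H_1(B)$ is an isomorphism if and only if $\aa \subseteq I^2$.

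Next I would treat $H_2$. Applying \Cref{hopf's} to $\varphi$, with kernel $\aa$, yields the exact sequence
$$H_2(A) \longrightarrow H_2(B) \longrightarrow \frac{\aa \cap I^2}{I\aa + \aa I} \longrightarrow 0,$$
so $H_2(A) \epi H_2(B)$ if and only if $\aa \cap I^2 = I\aa + \aa I$. Combining, $\varphi$ is $2$-acyclic if and only if $\aa \subseteq I^2$ and $\aa \cap I^2 = I\aa + \aa I$. Under both conditions the second reads $\aa = I\aa + \aa I$, which is the ``only if'' direction. For the converse, $\aa = I\aa + \aa I$ already forces $\aa \subseteq I^2$, since $\aa \subseteq I$ gives $I\aa + \aa I \subseteq I^2$; and then $\aa \cap I^2 = \aa = I\aa + \aa I$, so both conditions hold.

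There is no real obstacle: the result repackages \Cref{hopf's} together with the description of $H_1$. The points needing care are the elementary identifications $I(B) = I/\aa$ and $I(B)^2 = (I^2+\aa)/\aa$, the observation that the comparison map on $H_1$ is always surjective (so injectivity is the whole content), and the remark that the pair of conditions on $\aa$ collapses to the single condition $\aa = I\aa + \aa I$ — which is why the two implications require nothing different from one another.
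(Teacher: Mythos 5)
Your proof is correct and takes essentially the same route as the paper: identify $H_1$ with $I/I^2$ to see that injectivity on $H_1$ is equivalent to $\aa\subseteq I^2$, and apply Hopf's formula (\Cref{hopf's}) to see that surjectivity on $H_2$ is equivalent to $\aa\cap I^2=I\aa+\aa I$. You in fact spell out both implications, whereas the paper's own proof records only the ``only if'' direction explicitly; your closing observation that $\aa=I\aa+\aa I$ (together with $\aa\subseteq I$) already forces $\aa\subseteq I^2$ and hence $\aa\cap I^2=I\aa+\aa I$ is exactly the bookkeeping needed for the converse.
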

\begin{proof}
Since $ I(A)/I(A)^2\cong H_1(A)\to H_1(B)\cong I(B)/I(B)^2$ is an isomorphism, we obtain $\aa\subseteq I(A)^2.$ Then \Cref{hopf's} implies $\aa=I\aa+\aa I.$
\end{proof}

\subsection{Standard 2-acyclic morphisms}

If $A$ and $B$ are two algebras we denote by $A*B$ their free product i.e., the coproduct in the category of algebras. If $A$ and $B$ are augmented, then by the universal property, the augmentations induce an augmentation $A*B\to \Bbbk.$ This augmented algebra $A*B$ is the coproduct of $A$ and $B$ in the category of augmented algebras. 

Let $A$ be an augmented algebra and let $X$ be a set, which we call a set of variables. We define a $A$-polynomial as an element of $A*\Bbbk \langle X \rangle.$ Consider the morphism
$$A*  \Bbbk \langle X \rangle \longrightarrow \Bbbk \langle X \rangle/I^2,$$
which sends $I(A)$ to zero. An  $A$-polynomial $p\in A*\Bbbk \langle X \rangle$ is called {\it acyclic}, if it is in the kernel of this map. Let $$\mathcal P=(p_x)_{x\in X}$$ be a family of $A$-acyclic polynomials indexed by the set of variables. We denote by $\mathfrak{r}_\mathcal{P}$ the ideal of $A* \Bbbk \langle X \rangle $ generated by the set $\{x-p_x\mid x\in X\}$ and define an algebra $A_\mathcal P$ as follows
$$A_\mathcal{P}=(A* \Bbbk \langle X \rangle )/\mathfrak{r}_\mathcal{P}.$$ 
We denote by $w_\mathcal{P} : A \longrightarrow A_\mathcal{P}$
the obvious homomorphism. 

\begin{proposition}
\label{prop_stand_2_ac} 
The morphism $w_\mathcal{P}:A\to A_\mathcal{P}$  is 2-acyclic. 
\end{proposition}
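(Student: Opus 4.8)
The plan is to show that the surjection $w_{\mathcal P}: A \to A_{\mathcal P}$ has kernel contained in $I(A)^2$ and equal to $I(A)\cdot\mathfrak{k} + \mathfrak{k}\cdot I(A)$ where $\mathfrak{k}$ is that kernel, and then invoke the criterion ``$2$-acyclic surjections'' from the previous subsection. First I would introduce the ambient algebra $C = A * \Bbbk\langle X\rangle$, which is augmented, and observe $A_{\mathcal P} = C/\mathfrak{r}_{\mathcal P}$ with $\mathfrak{r}_{\mathcal P}$ generated by $\{x - p_x : x \in X\}$. Since the composite $A \to C \to A_{\mathcal P}$ is what we must analyze, I would factor it: the map $A \to C$ is a split inclusion of augmented algebras (split by the retraction $C \to A$ killing $X$), and so it is $2$-acyclic trivially; the real content is the surjection $C \to A_{\mathcal P}$. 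So it suffices to check that $C \epi A_{\mathcal P}$ is $2$-acyclic, i.e., that $\mathfrak{r}_{\mathcal P} \subseteq I(C)^2$ and $\mathfrak{r}_{\mathcal P} = I(C)\mathfrak{r}_{\mathcal P} + \mathfrak{r}_{\mathcal P}I(C)$; then the composite of $2$-acyclic morphisms being $2$-acyclic (which follows by the long exact sequence / the Hopf formula diagram chase, or directly since $A \to C$ is split) gives the claim.

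Next I would verify the two conditions on $\mathfrak{r}_{\mathcal P}$. For the inclusion $\mathfrak{r}_{\mathcal P} \subseteq I(C)^2$: each generator $x - p_x$ is acyclic by hypothesis, meaning it lies in the kernel of $C \to \Bbbk\langle X\rangle/I^2$ that kills $I(A)$; concretely this says that, modulo $I(C)^2$ and modulo $I(A)$, the polynomial $p_x$ has the same linear-in-$X$ part as $x$, so $x - p_x \in I(C)^2$. Hence the generators, and therefore the whole ideal $\mathfrak{r}_{\mathcal P}$, lie in $I(C)^2$. For the equation $\mathfrak{r}_{\mathcal P} = I(C)\mathfrak{r}_{\mathcal P} + \mathfrak{r}_{\mathcal P}I(C)$: the inclusion $\supseteq$ is clear since $I(C)\mathfrak{r}_{\mathcal P}, \mathfrak{r}_{\mathcal P}I(C) \subseteq \mathfrak{r}_{\mathcal P}$. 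For $\subseteq$, it is enough to show each generator $r_x := x - p_x$ lies in $I(C)\mathfrak{r}_{\mathcal P} + \mathfrak{r}_{\mathcal P}I(C)$; but this needs care, and is actually false for a single generator — rather one uses that, working modulo $I(C)\mathfrak{r}_{\mathcal P} + \mathfrak{r}_{\mathcal P}I(C)$, the ideal $\mathfrak{r}_{\mathcal P}$ is spanned by its generators $r_x$ (since any element is a sum of terms $c r_x c'$ with $c,c' \in C$, and writing $c = \varepsilon(c) + c_0$, $c' = \varepsilon(c') + c'_0$ with $c_0, c'_0 \in I(C)$, the cross terms land in $I(C)\mathfrak{r}_{\mathcal P} + \mathfrak{r}_{\mathcal P}I(C)$, leaving $\varepsilon(c)\varepsilon(c') r_x$). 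So modulo $I\mathfrak{r} + \mathfrak{r}I$, the quotient $\mathfrak{r}_{\mathcal P}/(I\mathfrak{r}_{\mathcal P}+\mathfrak{r}_{\mathcal P}I)$ is the $\Bbbk$-span of the images of the $r_x$; I then need to see these images vanish. Here I would use the Hopf-formula description: $H_2(A_{\mathcal P})$ receives a map whose cokernel-side is $\mathfrak{r}_{\mathcal P}\cap I(C)^2/(I\mathfrak{r}_{\mathcal P}+\mathfrak{r}_{\mathcal P}I)$, and the point of the acyclic polynomials is precisely that each $r_x$, being ``$x$ minus higher-order stuff'', can be absorbed: one rewrites the relation $x = p_x$ to express that $\mathfrak{r}_{\mathcal P}$ modulo $I\mathfrak{r}+\mathfrak{r}I$ is already zero because the $x$-linear part of $r_x$ is a free generator, exploiting the free-product structure $C = A * \Bbbk\langle X\rangle$.

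The cleanest route for this last point, which I expect to be the main obstacle, is to compute $H_*(A_{\mathcal P})$ directly from a presentation. Since $A_{\mathcal P} = (A * \Bbbk\langle X\rangle)/\mathfrak{r}_{\mathcal P}$ and $\mathfrak{r}_{\mathcal P}$ is generated by the $r_x = x - p_x$, and since $A * \Bbbk\langle X\rangle$ has the same low-degree homology as $A$ ``plus a free part on $X$'' — precisely, $H_1(A * \Bbbk\langle X\rangle) = H_1(A) \oplus \Bbbk X$ and $H_2(A*\Bbbk\langle X\rangle) = H_2(A)$ — one gets from the five-term exact sequence (or Hopf's formula, \Cref{hopf's}) applied to $A*\Bbbk\langle X\rangle \epi A_{\mathcal P}$ that $H_1(A_{\mathcal P}) = (H_1(A)\oplus\Bbbk X)/\langle \bar r_x\rangle$. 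Because $r_x$ is acyclic, $\bar r_x \equiv x \pmod{I^2}$ in $H_1$, so each basis vector $x$ is killed, leaving $H_1(A_{\mathcal P}) \cong H_1(A)$, realized via $w_{\mathcal P}$; this handles the $H_1$-isomorphism and simultaneously shows $\mathfrak{k} = \ker(w_{\mathcal P}) \subseteq I(A)^2$. For the $H_2$-epimorphism, the same exact sequence shows $H_2(A_{\mathcal P})$ is a quotient of $H_2(A*\Bbbk\langle X\rangle) = H_2(A)$ — indeed $\mathfrak{r}_{\mathcal P}\cap I^2$ is generated modulo $I\mathfrak{r}_{\mathcal P}+\mathfrak{r}_{\mathcal P}I$ by the $r_x$, and each $r_x$ maps to $0$ in that quotient (its $X$-linear term is a non-torsion generator that cannot be cancelled by lower terms, so the only way $\sum\alpha_x r_x \in I\mathfrak{r}+\mathfrak{r}I$ forces each $\alpha_x = 0$ unless... — more simply, the connecting map $H_2(A_{\mathcal P}) \to \mathfrak{r}_{\mathcal P}\cap I^2/(I\mathfrak{r}+\mathfrak{r}I)$ has image hit by the $\bar r_x$, which we've just seen are zero in $H_1$-terms). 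Putting this together: $w_{\mathcal P}$ induces an isomorphism on $H_1$ and a surjection on $H_2$, hence is $2$-acyclic, as required. I would write the $H_2$-part carefully since that is where the acyclicity hypothesis on $\mathcal P$ does its essential work.
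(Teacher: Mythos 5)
Your proposal contains several genuine errors, the most important being a misreading of the acyclicity hypothesis. In the paper it is $p_x$ itself that is acyclic, i.e.\ $p_x$ dies under $C=A*\Bbbk\langle X\rangle\to\Bbbk\langle X\rangle/I^2$; consequently $r_x=x-p_x$ is congruent modulo $I(C)^2$ to $x-a_x$ for some $a_x\in I(A)$, so in general $r_x\notin I(C)^2$. Your second paragraph asserts $x-p_x\in I(C)^2$, while your last paragraph asserts $\bar r_x\equiv x$ in $H_1(C)$; these two claims contradict each other and both are false (the correct statement is $\bar r_x=x-\bar a_x$ in $H_1(A)\oplus\Bbbk X$). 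The framing of your first paragraph also collapses: $w_\mathcal{P}$ is not surjective in general (already for ${\sf Para}(x_1x_2+y^2)$ the generator $y$ is not in the image of $\Bbbk\langle x_1,x_2\rangle$), so the ``$2$-acyclic surjections'' lemma does not apply to it; moreover $A\to C$ is not $2$-acyclic (it changes $H_1$ by the free summand $\Bbbk X$), and neither is $C\epi A_\mathcal{P}$ (it kills that summand), so no factorization into two $2$-acyclic maps exists --- only the composite is $2$-acyclic, which is exactly what has to be proved directly.

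The salvageable part is your final paragraph, but it still has two gaps. For $H_1$, quotienting $H_1(A)\oplus\Bbbk X$ by the span of the classes $x-\bar a_x$ (not of the $x$ alone) does give $H_1(A)$, by comparing $\Bbbk X$-components; this is essentially the paper's argument, which rewrites the relations as $x=a_x+q_x$ with $a_x\in I(A)$, $q_x\in I(C)^2$, and concludes $A_\mathcal{P}/I(A_\mathcal{P})^2\cong A/I(A)^2$. For the $H_2$-epimorphism you need two things you do not establish: (i) $H_2(A*\Bbbk\langle X\rangle)\cong H_2(A)$, a free-product homology fact you assert without proof and which is nowhere in the paper; and (ii) the vanishing of the Hopf cokernel $\bigl(\mathfrak{r}_\mathcal{P}\cap I(C)^2\bigr)/\bigl(I\mathfrak{r}_\mathcal{P}+\mathfrak{r}_\mathcal{P}I\bigr)$ from \Cref{hopf's}, where your argument trails off; the correct point is again a $\Bbbk X$-component comparison (modulo $I\mathfrak{r}_\mathcal{P}+\mathfrak{r}_\mathcal{P}I$ every element of $\mathfrak{r}_\mathcal{P}$ is a $\Bbbk$-combination $\sum_x\alpha_x r_x$, and membership in $I(C)^2$ forces all $\alpha_x=0$ because $I(\Bbbk\langle X\rangle)/I^2$ is free on $X$). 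The paper avoids (i) entirely: it presents $A_\mathcal{P}$ over the free algebra $\Bbbk\langle X\sqcup Y\rangle$, uses Remark \ref{rem_H2} to write any class in $H_2(A_\mathcal{P})$ as $\sum_{r}\alpha_r r+\sum_x\beta_x(x-p_x)$, and maps to $\Bbbk\langle X\rangle/I^2$ to conclude $\beta_x=0$ --- precisely the step where acyclicity of $p_x$ (zero linear part in the variables $X$) does its work --- so the class comes from $H_2(A)$. As written, your proof does not go through; repaired along these lines it would become a variant of the paper's argument that additionally requires the unproven free-product computation of $H_2$.
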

\begin{proof} It is easy to see that $I(A*B)/I(A*B)^2=I(A)/I(A)^2\oplus I(B)/I(B)^2$ for arbitrary $A,B.$ Set $F=\Bbbk\langle X \rangle.$ Consider the map $\varphi: I(A*F) \to I(A)/I(A)^2 \oplus I(F)/I(F)^2.$ Then  $\varphi(p_x)=(a_x+A^2,0)$ for some $a_x\in A.$ Set $q_x=p_x-a_x.$ Note that $q_x\in I(A*F)^2.$ Then the ideal $\mathfrak{r}_\mathcal{P}$ is generated by the relations $x=a_x+q_x,$ where $a_x\in A$ and $q_x\in I(A*F)^2.$ Therefore, $\mathfrak{r}_\mathcal{P}+I(A*F)^2=J+I(A*F)^2,$ where $J$ is generated by relations 
$x-a_x.$ Since $(A*F)/J=A,$ we obtain  
$A_\mathcal{P}/I(A_\mathcal{P})^2 = (A*F)/(J+I(A*F)^2)=A/I(A)^2.$ 
Therefore, the map $H_1(A)\to H_1( A_\mathcal{P})$ is an isomorphism. 

Let $A=\Bbbk \langle Y \mid R \rangle$ for some set $Y$ and some set $R\subseteq I( \Bbbk \langle Y \rangle).$ Then $$A_\mathcal{P}=\Bbbk \langle X\sqcup Y \mid \tilde{R}\cup \{x-p_x \mid x\in X\}  \rangle,$$
where $\tilde{R}$ 
is the image of $R$ in $\Bbbk \langle X\sqcup Y \rangle.$ Here we identify the second 
homology with the Hopf's 
formula. Consider an element $\theta\in H_2(A_\mathcal{P})$ 
and  presented it  as a linear 
combination of the relations 
$$\theta=\sum_{r\in R}\alpha_r r+\sum_{x\in X} \beta_x (x-p_x)  $$
(see Remark \ref{rem_H2}). The image of $\theta$ in $F/I(F)^2$ equals to zero because $\theta$ lies in $I(\Bbbk \langle X\cup Y\rangle )^2$. On the other hand it equals to $\sum_{x\in X}\beta_xx.$ Therefore $\beta_x=0$ for any $x\in X.$ It follows that $\theta=\sum_{r\in R}\alpha_r r.$ Then it comes from $H_2(A).$ Thus $H_2(A)\to H_2(A_\mathcal{P})$ is an epimorphism. 
\end{proof}

The morphisms $w_\mathcal{P}:A\to A_\mathcal{P}$ will be called {\it standard $2$-acyclic morphisms}.

\section{Parafree augmented algebras}

An augmented algebra $A$ is called {\it parafree} if it is residually nilpotent and there is a paraequiavence from a free algebra $\Bbbk \langle X \rangle \to A,$ where $X$ is finite. In paticular, the completion of a parafree algebra is the algbra of non-commutative power series $\hat A\cong  \Bbbk \lb X \rb.$

There is a standard way to construct parafree algebras. Take a family $\mathcal P=(p_y)_{y\in Y}$ of acyclic $\Bbbk \langle X \rangle$-polynomials. This means that $p_y\in {\rm Ker} (\Bbbk \langle X \sqcup Y \rangle \to \Bbbk \langle Y \rangle /I^2).$
Then, by  \Cref{prop_stand_2_ac}, the map $$\Bbbk \langle X \rangle \longrightarrow \Bbbk \langle X \rangle_{\mathcal P}$$ is 2-acyclic and hence it is a 
paraequivalence. The algebra $\Bbbk \langle X \rangle_{\mathcal P}$  need not to be parafree, because it is not always residually nilpotent. However, the quotient 
$${\sf Para}( \mathcal P)=\Bbbk \langle X \rangle_{\mathcal P}/I^\omega$$
is parafree, where $I^\omega=\bigcap I^n.$ Such augmented algebras will be called standard parafree algebras. If $Y=\{y_1,\dots,y_M\}$ is finite, then ${\sf Para}(\mathcal{P})$ is finitely generated. In this case we will use notations $p_i=p_{y_i}$ and 
${\sf Para}(p_1,\dots,p_M):={\sf Para}(\mathcal{P}).$
Then 
$${\sf Para}(p_1,\dots,p_M)=\Bbbk\langle x_1,\dots,x_N,y_1,\dots,y_M\mid y_1=p_1,\dots,y_M=p_M \rangle /I^\omega.$$

\begin{example} Let $X=\{x_1,x_2\}$ and $Y=\{y\}.$ Then 
${\sf Para}( x_1x_2+y^2 )= \Bbbk \langle x_1,x_2,y \mid y=x_1x_2+y^2 \rangle/I^\omega.$
 Then, by \Cref{ex_r.n}, we have  $I^\omega=0.$ Therefore 
$${\sf Para}( x_1x_2+y^2 )= \Bbbk \langle x_1,x_2,y \mid y=x_1x_2+y^2 \rangle.$$
\end{example}

\section{The main example}

Let $X=\{x_1,x_2\}$ and $Y=\{y_1,y_2\}.$ The elements $x_1x_2+y_1^2,x_2x_1+y_2^2$ are acyclic $\Bbbk \langle X \rangle$-polynomials. Consider the following parafree augmented  algebra
$$A:={\sf Para}(x_1x_2+y_1^2,x_2x_1+y_2^2).
$$
Set
$$r_1:=x_1x_2+y_1^2-y_1, \hspace{1cm} r_2=x_2x_1+y_2^2-y_2, $$
$$r_3:=x_1y_2-y_1x_1, \hspace{1cm}  r_4:=x_2y_1-y_2x_2$$
and  $B:= \Bbbk \langle X \rangle_{\mathcal{P}} =\Bbbk \langle x_1,x_2,y_1,y_2\mid  r_1, r_2 \rangle.$
Then $A=B/I^\omega.$ 

\begin{lemma}
The parafree algebra $A$ has the following presentation
$$A=\Bbbk \langle x_1,x_2,y_1,y_2\mid   r_1,r_2,r_3,r_4 \rangle.$$
Moreover, if we take the deg-lex order on $W(x_1,x_2,y_1,y_2)$ such that $x_1>x_2>y_1>y_2,$ then $R=\{r_1,r_2,r_3,r_4\}$ is a Gr\"obner-Shirshov basis. 
\end{lemma}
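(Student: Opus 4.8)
\emph{Step 1: the Gr\"obner--Shirshov property.} I begin with the ``moreover'' clause, which is a finite check. For the deg-lex order determined by $x_1>x_2>y_1>y_2$ one has ${\sf MT}_{\le}(r_1)=x_1x_2$, ${\sf MT}_{\le}(r_2)=x_2x_1$, ${\sf MT}_{\le}(r_3)=x_1y_2$ and ${\sf MT}_{\le}(r_4)=x_2y_1$, and all four relations are monic. These leading words have length $2$ and are pairwise distinct, so there are no inclusion compositions and no self-intersection compositions; the only overlaps are $x_1x_2x_1$ (of $r_1$ with $r_2$), $x_1x_2y_1$ (of $r_1$ with $r_4$), $x_2x_1x_2$ (of $r_2$ with $r_1$) and $x_2x_1y_2$ (of $r_2$ with $r_3$). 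One computes the four intersection compositions and reduces each to $0$ modulo $R$: for instance $r_1x_1-x_1r_2=y_1^2x_1-y_1x_1-x_1y_2^2+x_1y_2$ collapses to $0$ once $x_1y_2$ is rewritten as $y_1x_1$ via $r_3$, and $r_1y_1-x_1r_4=y_1^3-y_1^2+x_1y_2x_2$ collapses to $0$ after applying $r_3$ and then $r_1$, the other two being obtained by the symmetry exchanging $(x_1,y_1,r_1,r_3)$ with $(x_2,y_2,r_2,r_4)$. By the classical Composition--Diamond lemma for $\Bbbk\langle X\rangle$, $R=\{r_1,r_2,r_3,r_4\}$ is a Gr\"obner--Shirshov basis, and in particular the words of $W\setminus W\,{\sf MT}_{\le}(R)\,W$ form a $\Bbbk$-basis of $C:=\Bbbk\langle x_1,x_2,y_1,y_2\mid r_1,r_2,r_3,r_4\rangle$.

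\emph{Step 2: two inclusions.} Recall $A=B/I^\omega(B)$, where $B=\Bbbk\langle x_1,x_2,y_1,y_2\mid r_1,r_2\rangle$, and let $\mathfrak r\triangleleft B$ be the ideal generated by the images of $r_3$ and $r_4$, so that $C=B/\mathfrak r$. It suffices to prove $\mathfrak r=I^\omega(B)$. For the inclusion $\mathfrak r\subseteq I^\omega(B)=\ker(B\to\hat B)$, I use $\hat B\cong\Bbbk\lb X\rb/\overline{(\iota(r_1),\iota(r_2))}$ (\Cref{lemma_presentation_completion}) and show $\iota(r_3),\iota(r_4)\in\overline{(\iota(r_1),\iota(r_2))}$. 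In $\hat B$ the relation coming from $r_1$ makes the image $\overline{y_1}$ the unique element of $I(\hat B)$ with $\overline{y_1}=\overline{x_1x_2}+\overline{y_1}^2$; solving this by successive approximation shows $\overline{y_1}=h(\overline{x_1x_2})$, where $h(t)\in\Bbbk[[t]]$ is the unique power series with $h(t)=t+h(t)^2$, and likewise $\overline{y_2}=h(\overline{x_2x_1})$ for the \emph{same} $h$. Since $\overline{x_1}(\overline{x_2}\,\overline{x_1})^k=(\overline{x_1}\,\overline{x_2})^k\overline{x_1}$ for every $k$, applying this term by term gives $\overline{x_1}\,\overline{y_2}=h(\overline{x_1x_2})\,\overline{x_1}=\overline{y_1}\,\overline{x_1}$, so the image of $\iota(r_3)$ in $\hat B$ is $0$, and symmetrically so is that of $\iota(r_4)$; hence $\mathfrak r\subseteq I^\omega(B)$.

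\emph{Step 3: residual nilpotence of $C$ and conclusion.} It remains to show $C$ is residually nilpotent, for then $B\twoheadrightarrow C$ takes $I(B)^n$ onto $I(C)^n$, hence $I^\omega(B)$ into $I^\omega(C)=0$, giving $I^\omega(B)\subseteq\ker(B\to C)=\mathfrak r$; with Step 2 this yields $\mathfrak r=I^\omega(B)$ and therefore $A=B/I^\omega(B)=C$, the asserted presentation. To prove $C$ is residually nilpotent I apply \Cref{the_main_theorem} with $R=\{r_1,r_2,r_3,r_4\}$, $\le$ the deg-lex order above, $S=\iota(R)$, and $\preccurlyeq$ the $\theta$-lex order (an admissible $\mathbb N$-order) with $\theta(x_1)=\theta(x_2)=1$, $\theta(y_1)=\theta(y_2)=3$, and the letters ordered so that $x_1<y_1$ and $x_2<y_2$ --- note that in $\preccurlyeq$ the letters $x_i$ precede the $y_j$, opposite to the order used for $\le$. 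Then ${\sf mt}_{\preccurlyeq}(\iota(r_i))={\sf MT}_{\le}(r_i)$ for each $i$: in $\iota(r_1),\iota(r_2)$ the words $x_1x_2,x_2x_1$ have strictly smallest $\theta$-degree, while in $\iota(r_3),\iota(r_4)$ the two monomials have equal $\theta$-degree and the lexicographic tie-break selects $x_1y_2$ and $x_2y_1$. Consequently $\overline{(\iota(R))}=\overline{(S)}$ and ${\sf MT}_{\le}(R)={\sf mt}_{\preccurlyeq}(S)$, so the only remaining point is that $S=\iota(R)$ is a Gr\"obner--Shirshov basis with respect to $\preccurlyeq$; but the overlap words are the same four as in Step 1, the compositions are the images under $\iota$ of the polynomials rewritten there, and the same rewritings exhibit them as trivial modulo $(S,w)$, since each step replaces a monomial $u\,{\sf MT}_{\le}(r_i)\,v$ occurring with a nonzero coefficient by $u\,\iota(r_i)\,v$, whose minimal monomial $u\,{\sf MT}_{\le}(r_i)\,v$ is $\succ w$ (a direct comparison of $\theta$-degrees, the few equalities being resolved the right way by the lexicographic order). \Cref{the_main_theorem} now gives that $C$ is residually nilpotent, finishing the proof. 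The main obstacle is this last step: not the composition calculations themselves, which are routine, but the choice of the two admissible orders so that their leading words coincide --- which forces the relative position of the $x$'s and the $y$'s to be reversed --- and the verification that the power-series Gr\"obner--Shirshov condition for $\iota(R)$ reduces to the same finite computation as over $\Bbbk\langle X\rangle$.
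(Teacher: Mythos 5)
Your proposal is correct, and most of it follows the paper's own route: you verify that $R$ is a Gr\"obner--Shirshov basis for the deg-lex order with $x_1>x_2>y_1>y_2$ (the paper leaves these composition checks as ``straightforward computations''; your four overlap words and reductions are the right ones), and you obtain residual nilpotence of $C=\Bbbk\langle x_1,x_2,y_1,y_2\mid r_1,r_2,r_3,r_4\rangle$ from \Cref{the_main_theorem} with exactly the paper's choice of second order ($\theta(x_i)=1$, $\theta(y_j)=3$, the $y$'s above the $x$'s in the lexicographic tie-break), which then gives $I^\omega(B)\subseteq (r_3,r_4)$ and hence $A=C$. Where you genuinely diverge is the inclusion $(r_3,r_4)\subseteq I^\omega(B)$: the paper proves it inside $B$ by the identity $r_3=x_1(x_2x_1+y_2^2)-(x_1x_2+y_1^2)x_1=x_1y_2^2-y_1^2x_1$, i.e.\ $r_3\in I(B)\,(r_3)+(r_3)\,I(B)$, whence $r_3\in I(B)^n$ for all $n$ by a two-line induction (and likewise for $r_4$); you instead pass to $\hat B\cong \Bbbk\lb X\rb/\overline{(\iota(r_1),\iota(r_2))}$ and solve the relations explicitly, writing $\overline{y_1}=h(\overline{x_1x_2})$, $\overline{y_2}=h(\overline{x_2x_1})$ for the integral power series $h=t+h^2$ and using $x_1(x_2x_1)^k=(x_1x_2)^kx_1$. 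Your argument is sound --- uniqueness of the solution of $z=c+z^2$ in $I(\hat B)$ follows from residual nilpotence of completions (\Cref{lemma_completion_is_res_nil}), convergence of $h(c)$ from completeness (\Cref{th_completion_is_complete}), and integrality of the coefficients of $h$ makes it work over any $\Bbbk$ --- and it has the merit of explaining \emph{why} $r_3,r_4$ die in the completion; the paper's identity is shorter and needs none of these convergence/uniqueness remarks, which you should at least cite if you keep your version.
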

\begin{proof}
It is enough to prove that $I^\omega(B)=(r_3,r_4).$ Prove that $ r_3,r_4 \in I^\omega(B).$ Note that
\begin{equation*}
\begin{split}
r_3&=x_1(x_2x_1+y_2^2)-(x_1x_2+y_1^2)x_1  \\
&= x_1y_2^2-y_1^2x_1\\
&=r_3y_2-y_1r_3.
\end{split}    
\end{equation*}
Using the equation $r_3=r_3y_2-y_1r_3 ,$ it is easy to prove by induction that $r_3\in I^n(B)$ for any $n.$ Similarly $r_4\in I^n(B)$ for any $n.$ Then $r_3,r_4\in I^\omega(B).$ In order to prove that $I^\omega(B)=(r_3,r_4),$ it is enough to check that $\tilde A:=B/(r_3,r_4)$ is residually nilpotent. 

Prove that $\tilde A$ is residually nilpotent using \Cref{the_main_theorem}. Set $R=\{ r_1,r_2,r_3,r_4 \}.$ Then $\tilde A=\Bbbk \langle x_1,x_2,y_1,y_2 \rangle/(R).$ Consider two orders: $\leq$ is the deg-lex order such that $x_1>x_2>y_1>y_2$ and $\preccurlyeq$ is the $\theta$-lex order such that $\theta(x_1)=\theta(x_2)=1,$ $\theta(y_1)=\theta(y_2)=3$ and $y_1>y_2>x_1>x_2.$ Then $${\sf MT}_{\leq}(r_1)=x_1x_2= {\sf mt}_{\preccurlyeq}(r_1), \hspace{1cm} {\sf MT}_{\leq}(r_2)=x_2x_1= {\sf mt}_{\preccurlyeq}(r_2),$$  
$${\sf MT}_{\leq}(r_3)=x_1y_2= {\sf mt}_{\preccurlyeq}(r_3), \hspace{1cm} {\sf MT}_{\leq}(r_4)=x_2y_1= {\sf mt}_{\preccurlyeq}(r_4).$$
Straightforward computations show that $R$ and $\iota(R)$  are  Gr\"obner-Shirshov bases in appropriate senses.
\Cref{the_main_theorem} implies that $\tilde A$ is residually nilpotent. Therefore $I^\omega(B)=(r_3,r_4),$ and hence,  $A=\tilde A.$ 
\end{proof}

Further we will identify elements of $\Bbbk \langle x_1,x_2,y_1,y_2 \rangle$ with their images in $A.$

Denote by $\mathcal B$ the set of words in $W(x_1,x_2,y_1,y_2)$ that do not contain $x_1x_2,$ $x_2x_1,$ $x_1y_2,$ $x_2y_1$ as subwords. Then, by the CD-lemma, $(w+\mathfrak{r})_{w\in \mathcal B}$ is a basis of $A,$ where $\mathfrak{r}= (r_1,r_2,r_3,r_4).$  For each word $w\in \mathcal B$ and each letter $l\in \{x_1,x_2,y_1,y_2\}$ we set 
$$c_l(w):= \begin{cases} u, & \text{if } w=lu\ \  \text{ for } u\in \mathcal B  \\ 
0, & \text{else.} 
\end{cases} $$  
In particular, $c_l(1)=0.$
Extend this to a $\Bbbk$-linear map $c_l:A\to A.$ Then for any $a$ we have
\begin{equation}\label{eq_c0}
  a=\varepsilon(a)+ x_1c_{x_1}(a)+x_2c_{x_2}(a)+y_1c_{y_1}(a)+y_2c_{y_2}(a).  
\end{equation}
If $a\in A$ we denote by $c_l a$ and $ac_l$ maps $A\to A$ given by compositions  $(c_la)(x)= c_l(ax) $ and $(ac_l)(x)=ac_l(x).$ Using this functional language we can rewrite the equation \eqref{eq_c0} as 
$$
    {\sf id}=\varepsilon+ x_1c_{x_1}+x_2c_{x_2}+y_1c_{y_1}+y_2c_{y_2}.
$$
\begin{lemma}\label{lemma_eqations_for_c} The following equations for $c_{x_1},c_{x_2},c_{y_1},c_{y_2}$ hold
$$\begin{array}{rclcrcl}
c_{x_1} x_1&=&\varepsilon+ x_1c_{x_1}+y_1c_{y_1}, & \hspace{0.5cm} &  c_{x_1}x_2&=&0 , \\
c_{x_2}x_1&=&0 &&  c_{x_2}x_2&=&\varepsilon+ x_2c_{x_2}+y_2c_{y_2}, \\
c_{y_1}x_1&=&-(y_1-1)c_{x_2}+x_1 c_{y_2}, && c_{y_1}x_2&=&0, \\
c_{y_2}x_1&=&0 && c_{y_2}x_2&=&-(y_2-1)c_{x_1}+x_2 c_{y_1}, 
\end{array}
$$
$$
\begin{array}{rcl}
   c_{y_i}y_i&=&{\sf id},\\
   c_{l}y_i&=&0 \text{ \ for\ } l\ne y_i,
\end{array}
$$
where $i=1,2.$
\end{lemma}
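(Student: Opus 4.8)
The plan is to derive all of these identities at once from the direct-sum decomposition of $A$ furnished by the Gröbner–Shirshov basis $R$. Recall that $(w+\mathfrak r)_{w\in\mathcal B}$ is a $\Bbbk$-basis of $A$, where $\mathcal B$ is the set of words avoiding $x_1x_2,x_2x_1,x_1y_2,x_2y_1$. Every nonempty $w\in\mathcal B$ factors uniquely as $w=\ell w'$ with $\ell$ its first letter and $w'\in\mathcal B$, and the compatibility between $\ell$ and $w'$ is: if $\ell=x_1$ then $w'$ does not begin with $x_2$ or $y_2$; if $\ell=x_2$ then $w'$ does not begin with $x_1$ or $y_1$; if $\ell\in\{y_1,y_2\}$ there is no constraint. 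Writing $\mathcal B_{x_i}$ for the set of reduced words $w$ such that $x_iw$ is still reduced (equivalently: $w=1$, or $w$ begins with $x_i$ or $y_i$) and $V_i$ for its $\Bbbk$-span, one gets a decomposition of $\Bbbk$-modules
\[
A=\Bbbk 1\ \oplus\ x_1V_1\ \oplus\ x_2V_2\ \oplus\ y_1A\ \oplus\ y_2A,
\]
in which $c_{x_i}$ is the projection onto $x_iV_i$ followed by division by $x_i$, and $c_{y_i}$ is the projection onto $y_iA$ followed by division by $y_i$. In particular multiplication by $y_i$ is injective with left inverse $c_{y_i}$, and multiplication by $x_i$ restricted to $V_i$ is injective with left inverse $c_{x_i}$.

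Next I would record the elementary closure properties $1\in V_1$, $x_1V_1\subseteq V_1$, $y_1A\subseteq V_1$, together with their $x_2$-analogues. Each is immediate from the description of $\mathcal B_{x_i}$: prefixing by $x_1$ a reduced word that is empty or begins with $x_1$ or $y_1$ produces a reduced word again beginning with $x_1$, and any word beginning with a letter $y_j$ is reduced at its first junction. The $y$-rows of the lemma follow at once: for $a\in A$ the element $y_ia$ lies in the summand $y_iA$, whence $c_{y_i}(y_ia)=a$ and $c_\ell(y_ia)=0$ for $\ell\neq y_i$, i.e. $c_{y_i}y_i={\sf id}$ and $c_\ell y_i=0$.

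For the $x_j$-columns I would take an arbitrary $a\in A$, write its canonical decomposition $a=\varepsilon(a)+x_1c_{x_1}(a)+x_2c_{x_2}(a)+y_1c_{y_1}(a)+y_2c_{y_2}(a)$, multiply on the left by $x_1$, and rewrite the two-letter prefixes using the defining relations, namely $x_1x_2=y_1-y_1^2$ and $x_1y_2=y_1x_1$ (the prefixes $x_1x_1,x_1y_1$ are already reduced). Regrouping gives
\[
x_1a=x_1\bigl(\varepsilon(a)+x_1c_{x_1}(a)+y_1c_{y_1}(a)\bigr)+y_1\bigl(-(y_1-1)c_{x_2}(a)+x_1c_{y_2}(a)\bigr).
\]
Here the first bracket lies in $V_1$ by the closure properties above, so the right-hand side is exactly the decomposition of $x_1a$ along $A=\Bbbk 1\oplus x_1V_1\oplus x_2V_2\oplus y_1A\oplus y_2A$ (its $\Bbbk 1$, $x_2V_2$ and $y_2A$ components all vanish). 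Uniqueness of that decomposition then reads off $c_{x_1}(x_1a),\,c_{y_1}(x_1a),\,c_{x_2}(x_1a),\,c_{y_2}(x_1a)$, which are precisely the four identities in the $x_1$-column. The $x_2$-column is the mirror computation, using $x_2x_1=y_2-y_2^2$, $x_2y_1=y_2x_2$, and the summand $x_2V_2$.

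The one step that is not pure bookkeeping — and hence the point to get right — is the claim that the regrouped ``$x_1$-part'' $\varepsilon(a)+x_1c_{x_1}(a)+y_1c_{y_1}(a)$ belongs to $V_1$ (and its $x_2$-analogue to $V_2$): it is exactly this that licenses identifying the coefficients $c_\ell(x_ja)$ by invoking uniqueness of the direct-sum decomposition, rather than performing further reductions by hand (the hidden products $x_1(x_1c_{x_1}(a))$ and $x_1(y_1c_{y_1}(a))$ are already absorbed, since $x_1V_1\subseteq V_1$ and $y_1A\subseteq V_1$). Once the closure properties $x_iV_i\subseteq V_i$, $y_iA\subseteq V_i$ are in place this is automatic, so I expect no real difficulty beyond setting up the decomposition correctly.
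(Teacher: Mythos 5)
Your proposal is correct, and it takes a genuinely different route from the paper's proof. The paper verifies the identities directly on basis words: it evaluates both sides on each $w\in\mathcal B$, splits into cases according to the first letter of $w$, and in the hardest case ($w=y_2^nu$, with subcases depending on whether $u$ begins with $x_2$) carries out the iterated rewriting of $x_1y_2^nu$ to normal form by hand. You instead package the Composition--Diamond basis into the $\Bbbk$-module decomposition $A=\Bbbk 1\oplus x_1V_1\oplus x_2V_2\oplus y_1A\oplus y_2A$, identify $c_{x_i}$ and $c_{y_i}$ as the corresponding projections followed by the (injective) division maps, multiply the canonical expansion of an arbitrary $a$ on the left by $x_1$, rewrite only the two-letter prefixes $x_1x_2=y_1-y_1^2$ and $x_1y_2=y_1x_1$, and read off all four identities of the $x_1$-column at once from uniqueness of the decomposition; the $y$-rows and the $x_2$-column come for free by the same mechanism. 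You also correctly isolate the one point that needs checking, namely $1\in V_1$, $x_1V_1\subseteq V_1$, $y_1A\subseteq V_1$ (and the mirror statements), which are immediate from the list of forbidden subwords. What each approach buys: yours avoids the normal-form case analysis entirely, since the summand $y_1A$ absorbs the un-reduced bracket $-(y_1-1)c_{x_2}(a)+x_1c_{y_2}(a)$ without further reduction, and it makes transparent why exactly these eight operator identities appear; the paper's computation is more pedestrian but needs no preliminary setup beyond the definition of $c_l$, checking everything letter by letter on $\mathcal B$. Both arguments rest on the same input, the Gr\"obner--Shirshov basis $\mathcal B$ and the relations $r_1,\dots,r_4$, so your proof is a legitimate (and arguably cleaner) substitute for the one in the paper.
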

\begin{proof}
First we prove the equations for $c_lx_1.$  It is enough to prove that the equations holds on all $w\in \mathcal B.$ If $w=1,$  it is obvious. Assume that $w\ne 1$ and denote by $l$ the first letter of $w.$ Consider the following cases. 

Case 1: $l=x_1$ or $l=y_1$. In this case $x_1w\in \mathcal B$ and $c_{x_1}(x_1w)=w=x_1c_{x_1}(w)+y_1c_{y_1}(w).$ We also have $0=c_{x_2}(x_1w)=c_{y_1}(x_1w)=c_{y_2}(x_1w)$ and $0=c_{x_2}(w)=c_{y_2}(w).$

Case 2: $l=x_2.$ In this case $w=x_2u$ for some $u\in \mathcal{B}$ and we have $x_1w \equiv x_1x_2u\equiv -y_1^2u+y_1u.$ Note that $y_1^2u,y_1u\in \mathcal{B}.$ Then $c_{y_1}(x_1w)=-y_1u+u=-(y_1-1)c_{x_2}(w)$ and $0=c_{x_1}(x_1w)=c_{x_2}(x_1w)=c_{y_2}(x_1w_1)=c_{x_1}(w)=c_{y_2}(w).$

Case 3: $l=y_2.$ In this case $w=y_2^nu$ for some $u\in \mathcal B$ whose the first letter is not $y_2$ and some  $n\geq 1.$  Consider two sub-cases here. 

\begin{itemize}
    \item[] Case 3.1: the first letter of $u$ is not $x_2.$ In this case $x_1w=x_1y_2^nu= y_1^nx_1u$ and $y_1^nx_1u \in  \mathcal{B}.$ Then $c_{y_1}(x_1w)=y_1^{n-1}x_1u=x_1y_2^{n-1}u=x_1 c_{y_2}(w).$ We also have $c_{s}(x_1w)=0$ for $s=x_1,x_2,y_2$ and $c_{t}(w)=0$ for $t=x_1,x_2,y_1.$
    
    \item[] Case 3.2: the first letter of $u$ is $x_2.$ Then $u=x_2v.$ In this case $x_1w=x_1 y_2^n x_2 v=-y_1^n (y_1^2-y_1)v=-(y_1^2-y_1)y_1^n v.$ Note that $y_1^kv\in \mathcal B$ for $k=n+1,n+2,$ and hence,  $c_{y_1}(x_1w)=-y_1^{n-1}(y_1^2-y_1)v=  y_1^{n-1} x_1x_2v=x_1y_2^{n-1}x_2 v=x_1c_{y_2}(w).$ We also have $c_{s}(x_1w)=0$ for $s=x_1,x_2,y_2$ and $c_{t}(w)=0$ for $t=x_1,x_2,y_1.$ 
\end{itemize} 
So we proved the equations for $c_lx_1.$ The equations for $c_lx_2$ are similar. The equations for $c_ly_i$ follow from the fact that  $w\in \mathcal B$ implies $y_iw\in \mathcal B.$
\end{proof}

Now we will work with right modules. A homomorphism of {\it right} modules $A\to A$ is a {\it left} multiplication $x\mapsto ax$ on some element $a\in A.$ We will also identify elements of $A$ with such homomorphisms $a:A\to A.$ Elements of $A^n$ will be written as columns. A homomorphism of right modules $A^n\to A^m$ will be written as a $m\times n$-matrix over $A.$ Composition then corresponds to the matrix multiplication $(a_{i,j})_{i,j} (b_{j,k})_{j,k}=(\sum_j a_{i,j}b_{j,k})_{i,k}.$ 

More generally, if we have some $\Bbbk$-module $M,$ we describe $\Bbbk$-linear maps  $M^n\to M^m$ as $m\times n$-matrices $(f_{i,j})_{i,j}$ of $\Bbbk$-linear maps $f_{i,j}:M\to M.$

Consider the following chain complex $P_\bullet$ such that $P_i=0$ for $i<0,$ $P_0=A,$ $P_i=A^4$ for $i> 0$
$$\dots \xrightarrow{d_4}  A^4 \xrightarrow{d_3}  A^4 \xrightarrow{d_2} A^4 \xrightarrow{d_1}  A \to 0 \to \dots   $$
whose differentials are described as follows: 
$d_1=(x_1,x_2,y_1,y_2)$
and
$$d_{2+2n}= 
\left( \begin{matrix}
x_2   &  0     & -y_2   & 0    \\
0     & x_1    & 0      & -y_1 \\
y_1-1 & 0      & x_1    & 0    \\
0     & y_2-1  & 0      & x_2  \\
\end{matrix} \right), \hspace{0.5cm} d_{3+2n}=
\left( \begin{matrix}
x_1     & 0       & y_1     & 0   \\
0       & x_2     & 0       & y_2 \\
-y_2+1  & 0       & x_2     & 0   \\
0       & -y_1+1  & 0       & x_1 
\end{matrix} \right)$$
for $n\geq 0.$ A straightforward computation show that $d_id_{i+1}=0.$ 

\begin{lemma} $P_\bullet$ is a projective resolution of the trivial right module $\Bbbk$ over $A.$
\end{lemma}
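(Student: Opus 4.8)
The plan is to exhibit an explicit $\Bbbk$-linear contracting homotopy of the augmented complex
$$\cdots \xrightarrow{d_2} P_1 \xrightarrow{d_1} P_0 \xrightarrow{\varepsilon} \Bbbk \to 0,$$
which forces it to be exact; since every $P_i$ is a free right $A$-module it is projective, so exactness is the only thing left to check (that $d_id_{i+1}=0$ is already noted, and $\varepsilon d_1=0$ because $\varepsilon(x_i)=\varepsilon(y_i)=0$, so the complex is well defined). First I would identify the bottom homology: $\operatorname{im}(d_1)=x_1A+x_2A+y_1A+y_2A\subseteq I(A)$, and the reverse inclusion is precisely equation \eqref{eq_c0}, which says that $\varepsilon(a)=0$ implies $a=x_1c_{x_1}(a)+x_2c_{x_2}(a)+y_1c_{y_1}(a)+y_2c_{y_2}(a)$; hence $H_0(P_\bullet)=A/I(A)$ and $\varepsilon$ identifies it with $\Bbbk$.

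Next, working in the matrix notation fixed above (with the $\Bbbk$-linear maps $c_l\colon A\to A$, and left multiplications by elements of $A$, as matrix entries), I would take $s_{-1}\colon\Bbbk\to A$ to be the unit map and
$$s_0=\begin{pmatrix} c_{x_1}\\ c_{x_2}\\ c_{y_1}\\ c_{y_2}\end{pmatrix},\qquad
s_1=\begin{pmatrix} c_{x_2}&0&0&0\\ 0&c_{x_1}&0&0\\ -c_{y_2}&0&0&0\\ 0&-c_{y_1}&0&0\end{pmatrix},\qquad
s_2=\begin{pmatrix} c_{x_1}&0&0&0\\ 0&c_{x_2}&0&0\\ c_{y_1}&0&0&0\\ 0&c_{y_2}&0&0\end{pmatrix},$$
and, using that $d_{2+2n}=d_2$ and $d_{3+2n}=d_3$ as matrices for all $n\ge 0$, set $s_{2k+1}=s_1$ and $s_{2k}=s_2$ for all $k\ge 1$. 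The claim to verify is $d_1s_0+s_{-1}\varepsilon={\sf id}_{P_0}$ together with $d_{i+1}s_i+s_{i-1}d_i={\sf id}_{P_i}$ for every $i\ge 1$.

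The verification is a direct computation of a few $4\times 4$ matrix products using \Cref{lemma_eqations_for_c} and \eqref{eq_c0}, the latter in the form ${\sf id}=\varepsilon+x_1c_{x_1}+x_2c_{x_2}+y_1c_{y_1}+y_2c_{y_2}$ (so that, e.g., ${\sf id}-\varepsilon-x_1c_{x_1}-y_1c_{y_1}=x_2c_{x_2}+y_2c_{y_2}$). In degree $0$ the relation $d_1s_0={\sf id}-s_{-1}\varepsilon$ is literally \eqref{eq_c0}. In degrees $1$ and $2$ one computes $s_0d_1$ and $s_1d_2$ entrywise from \Cref{lemma_eqations_for_c}, simplifies their diagonal entries by \eqref{eq_c0}, and checks $d_2s_1={\sf id}-s_0d_1$ and $d_3s_2={\sf id}-s_1d_2$ column by column. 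The single load-bearing point is the identity $s_2d_3=s_0d_1$ (both sides evaluate to the same matrix, using $c_{y_i}y_i={\sf id}$ and $c_ly_i=0$ for $l\neq y_i$): combined with the $2$-periodicity $d_{2+2n}=d_2$, $d_{3+2n}=d_3$ it makes the homotopy relations in degrees $\ge 3$ reduce, according to parity, to those already checked in degrees $1$ and $2$, so the periodic choice of the $s_i$ works. This bookkeeping of the matrix products is the only real labour and there is no conceptual obstacle beyond guessing the homotopy — the $2$-periodicity of the resolution is mirrored by the $2$-periodicity of its contraction. Once ${\sf id}=d_{i+1}s_i+s_{i-1}d_i$ holds in all degrees, the augmented complex is contractible over $\Bbbk$, hence exact, and together with the freeness of the $P_i$ this shows $P_\bullet\to\Bbbk$ is a projective resolution.
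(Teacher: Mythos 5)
Your proposal is correct and follows essentially the same route as the paper: you exhibit the identical $\Bbbk$-linear contracting homotopy (your $s_{-1},s_0,s_1,s_2$ are exactly the paper's $h_{-1},h_0,h_{1+2n},h_{2+2n}$), verify the homotopy identities via \Cref{lemma_eqations_for_c} and \eqref{eq_c0}, and conclude exactness from contractibility plus freeness of the $P_i$. The only cosmetic difference is bookkeeping: the paper checks the relations for $i=1,2,3$ and invokes the $2$-periodicity of $d$ and $h$, while you check $i=1,2$ and reduce the odd degrees $i\ge 3$ via the (correct) identity $s_2d_3=s_0d_1$ — these are trivially equivalent.
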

\begin{proof} In order to prove that $P_\bullet$ is a resolution, it is enough to prove that the augmented resolution $P'_\bullet=(\dots \to P_1\to P_0\overset{\varepsilon}\to \Bbbk \to 0 \to \dots)$ is contractible as a complex of $\Bbbk$-modules. Consider the following $\Bbbk$-linear maps $h_i:P_i\to P_{i+1}$ and $h_{-1}:\Bbbk \to P_0$ given by $h_{-1}(1)=1,$  $h_0=(c_{x_1},c_{x_2},c_{y_1},c_{y_2})^{\top} $
$$ h_{1+2n}=\left( 
\begin{smallmatrix}
c_{x_2} &  0 & 0 & 0\\
0 & c_{x_1}  & 0 & 0\\
-c_{y_2} & 0 & 0 & 0\\
0 & -c_{y_1} & 0 & 0
\end{smallmatrix}
\right), \hspace{1cm} 
h_{2+2n}=\left( 
\begin{smallmatrix}
c_{x_1} &  0 & 0 & 0\\
0 & c_{x_2}  & 0 & 0\\
c_{y_1} & 0 & 0 & 0\\
0 & c_{y_2} & 0 & 0
\end{smallmatrix}
\right)$$
for $n\geq 0.$ Using \Cref{lemma_eqations_for_c} one can check
$$h_{-1}\varepsilon + d_1 h_0={\sf id}_{A},  $$
$$h_{i-1}d_{i} + d_{i+1} h_{i}={\sf id}_{A^4}$$
for $i=1,2,3.$ This implies that the augmented resolution $P'_\bullet$ is contractible. 
\end{proof}

\begin{remark}
 The contracting homotopy for this resolution was hinted by results of \cite{JW09}, \cite{Sc06} about Anick's resolution.
\end{remark}

 Note that ${\rm Im}(d_1:P_1\to P_0)=I(A).$ Denote by $\tilde d_1:A^4\epi  I(A)$ the restriction of the map $d_1.$  Then $\tilde d_1$ induces an isomorphism 
$I(A)\cong  {\rm Coker}(d_{2+2n}).$
It follows that there exists a map $\alpha: I(A)\to P_{2+2n}$ such that $d_{3+2n}=\alpha \tilde d_1.$ Then we have an exact sequence
\begin{equation}\label{eq_ext}
0 \to I(A)\to P_{2+2n} \to \dots \to P_1 \to P_0 \to \Bbbk \to 0 
\end{equation}
for any $n\geq 0.$ This exact sequence represents an element in ${\rm Ext}^{3+2n}_A(\Bbbk,I(A))=H^{3+2n}(A,I(A)).$

\begin{lemma}\label{lemma_ext} The exact sequence \eqref{eq_ext} represents a non-trivial element of $H^{3+2n}(A,I(A))$ for any $n\geq 0.$ In particular, 
$$H^{3+2n}(A,I(A))\ne 0$$
for any $n\geq 0.$
\end{lemma}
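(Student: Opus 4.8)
The plan is to compute $H^{3+2n}(A,I(A))$ directly from the resolution $P_\bullet$ and to show that the cocycle representing the class of \eqref{eq_ext} is not a coboundary.

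\textbf{Step 1: identify the class.} Since $P_\bullet$ is a projective resolution of the trivial right module $\Bbbk$, we have $H^{3+2n}(A,I(A))=H^{3+2n}\big(\mathrm{Hom}_A(P_\bullet,I(A))\big)$. For $k\ge 1$ the $\Bbbk$-module $\mathrm{Hom}_A(P_k,I(A))$ is identified with $I(A)^4$ (a homomorphism being recorded by its tuple of values on the standard basis), and the cochain differential is right multiplication by the matrix of the relevant $d_{k+1}$. Comparing $P_\bullet$ with the complex \eqref{eq_ext} via the identity in degrees $\le 2+2n$ and $\alpha^{-1}\circ d_{3+2n}$ in degree $3+2n$, one reads off that the class of \eqref{eq_ext} is represented by $\tilde d_1$, i.e.\ by the tuple $(x_1,x_2,y_1,y_2)\in I(A)^4$; it is a cocycle because $d_1 d_{2}=0$. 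As the differentials of $P_\bullet$ are $2$-periodic from degree $2$ on, these groups do not depend on $n$, so it is enough to treat $n=0$.

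\textbf{Step 2: reduce to a statement inside $A$.} By construction the class vanishes iff $\tilde d_1$ is a coboundary, i.e.\ iff there is a tuple $(m_1,m_2,m_3,m_4)\in I(A)^4$ which, multiplied on the right by the matrix of $d_{3+2n}$, equals $(x_1,x_2,y_1,y_2)$; writing this out with the explicit matrix of $d_{3+2n}$ produces a concrete finite system of equations in the $m_i$ over $A$. Equivalently, in the language of \eqref{eq_ext}: $\alpha\colon I(A)\hookrightarrow P_{2+2n}=A^4$ is a split monomorphism, so $I(A)$ is a direct summand of $A^4$, hence a projective $A$-module. Thus it suffices to prove that $I(A)$ is \emph{not} projective.

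\textbf{Step 3: rule out projectivity of $I(A)$.} If $I(A)$ were projective, then $0\to I(A)\to A\to\Bbbk\to 0$ would give $\operatorname{pd}_A\Bbbk\le 1$. I would invoke the Stallings--Swan-type dichotomy for augmented algebras (reducing to a residue field of $\Bbbk$ if necessary, which is harmless since residual nilpotence of $A$ is inherited by the base change) to conclude that $A$ is a free augmented algebra, and since $I(A)/I(A)^2$ has rank $2$, that $A$ is free on $x_1,x_2$. This is impossible: it would force an element $y_1$ of the free algebra $\Bbbk\langle x_1,x_2\rangle$ with $y_1=x_1x_2+y_1^2$, which a comparison of top degrees excludes. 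Hence $I(A)$ is not projective, so the class of \eqref{eq_ext} is nonzero for every $n$, and in particular $H^{3+2n}(A,I(A))\neq 0$.

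A self-contained alternative to Step 3, avoiding the Stallings--Swan analogue, is to show directly that the system of Step 2 has no solution with $m_i\in I(A)$, working with the monomial $\Bbbk$-basis $\mathcal B$ of $A$ and the left-division operators $c_l$ of \Cref{lemma_eqations_for_c}: the ``$y_i-1$'' entries of the differential force $m_3$ (resp.\ $m_4$) to satisfy a recursion whose only solution in $\widehat A$ is an infinite normal-form expression (morally $\sum_{k\ge 0}y_1^k x_1$), which is not an element of $A$. I expect this bookkeeping with normal forms to be the main obstacle; Steps 1 and 2 are routine homological algebra, and the whole difficulty lies in distinguishing $A$ from its completion, where the analogous extension \emph{does} split.
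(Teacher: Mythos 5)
Your Steps 1--2 are correct and agree with the paper's setup: the class of \eqref{eq_ext} is represented by the cocycle $\tilde d_1$, i.e.\ by $(x_1,x_2,y_1,y_2)^\top\in I(A)^4$, and since $\tilde d_1$ is surjective and $d_{3+2n}=\alpha\tilde d_1$, vanishing of the class is indeed equivalent to $\alpha$ being a split monomorphism, hence would make $I(A)$ a direct summand of $A^4$. The genuine gap is Step 3. The ``Stallings--Swan-type dichotomy'' you invoke does not exist for augmented algebras, and the statement you need is false: already $\Bbbk[x,x^{-1}]$ over a field, with augmentation $x\mapsto 1$ (more generally the group algebra of a finitely generated free group), is a residually nilpotent augmented algebra of finite type whose augmentation ideal is a \emph{free} module, so $\mathrm{pd}_A\Bbbk\le 1$, and yet it is not isomorphic to a free associative algebra. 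So ``$I(A)$ projective $\Rightarrow A\cong\Bbbk\langle x_1,x_2\rangle$'' has no support, and the contradiction you extract from $y_1=x_1x_2+y_1^2$ never gets off the ground. (In addition, the paper works over an arbitrary commutative ring $\Bbbk$, and your reduction to a residue field is unjustified: residual nilpotence is not obviously preserved by that base change.)

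The ``self-contained alternative'' you defer as the main obstacle is exactly the paper's proof, and it is much shorter than you anticipate; no infinite normal-form bookkeeping is required. Identifying $\mathrm{Hom}_A(P_k,I(A))\cong I(A)^4$, one must show $(x_1,x_2,y_1,y_2)^\top\notin\mathrm{Im}\bigl(d_{3+2n}^\top\bigr)$. If $d_{3+2n}^\top(v)=(x_1,x_2,y_1,y_2)^\top$ with $v_i\in I(A)$, the first coordinate reads $x_1v_1+(1-y_2)v_3=x_1$; applying $c_{y_2}$ and using \Cref{lemma_eqations_for_c} (which gives $c_{y_2}x_1=0$ and $c_{y_2}y_2=\mathrm{id}$) yields $v_3=c_{y_2}(v_3)$, and since $c_{y_2}$ strictly shortens basis words this forces $v_3=0$; then $x_1=x_1v_1$ with $v_1\in I(A)$ gives $x_1\in\bigcap_n I(A)^n=0$ by the residual nilpotence of $A$, a contradiction. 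The completion $\hat A$ enters only through that residual nilpotence, established earlier. As written, your proposal proves the lemma only modulo an unproved (and in fact false) structure theorem, so it is incomplete; carrying out the displaced computation above is what actually closes it.
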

\begin{proof}
The diagram 
$$
\begin{tikzcd}
P_{3+2n}\arrow{r}{d_{3+2n}} \arrow{d}{d'_1} & P_{2+2n}\arrow{r}\arrow[equal]{d} & \dots\arrow{r} & P_1\arrow{r}\arrow[equal]{d} & P_0\arrow{r}\arrow[equal]{d} & \Bbbk \arrow[equal]{d} \\ 
I(A)\arrow{r}{\alpha} & P_{2+2n}\arrow{r} & \dots \arrow{r} & P_1\arrow{r} & P_0\arrow{r} & \Bbbk 
\end{tikzcd}
$$
shows that the exact sequence \eqref{eq_ext} corresponds to $\tilde d_1$ in ${\rm Hom}(P_{3+2n},I(A)),$ if we present $H^{3+2n}(A,I(A))$ as the  homology of the complex ${\rm Hom}(P_\bullet,I(A)).$ If we use the isomorphism ${\rm Hom}_A(A^4,I(A))\cong I(A)^4$ we obtain that the complex $ {\rm Hom}(P_\bullet,I(A))$ is isomorphic to the complex
$$\dots  \xleftarrow{\ d_4^{\top}} I(A)^4 \xleftarrow{\ d_3^{\top}} I(A)^4 \xleftarrow{\ d_2^{\top}}  I(A)^4
\xleftarrow{\ d_1^{\top}}  I(A) \leftarrow 0,$$
where $d_i^\top$ denotes transposed matrix $d_i$ considered as a map $I(A)^n\to I(A)^m.$ Moreover, the element $d'_1:P_{3+2n}\to I(A)$ corresponds to the element  $(x_1,x_2,y_1,y_2)^\top\in I(A)^4.$ Then it is enough to prove that $(x_1,x_2,y_1,y_2)^\top\notin {\rm Im} (d_{3+2n}^\top),$
where 
$$d_{3+2n}^\top=
\left( \begin{smallmatrix}
x_1 & 0   & -y_2+1 & 0     \\
0   & x_2 & 0      & -y_1+1\\
y_1 & 0   & x_2    & 0     \\
0   & y_2 & 0      & x_1
\end{smallmatrix}
\right) : I(A)^4 \to I(A)^4.
$$
Assume the contrary that there exists $v=(v_1,v_2,v_3,v_4)\in I(A)^4$ such that $d_{3+2n}^\top(v)=(x_1,x_2,y_1,y_2)^\top.$ This implies  $x_1v_1+(-y_2+1)v_3=x_1.$ We apply $c_{y_2}$ to both sides of this equation. By Lemma \ref{lemma_eqations_for_c},  we obtain $c_{y_2}(x_1v_1)=0$ and $c_{y_2}(y_2v_3)=v_3.$ Therefore, the equation $x_1v_1+(-y_2+1)v_3=x_1$ implies $v_3=c_{y_2}(v_3).$ It follows that $v_3=0.$  Then $x_1=x_1v_1$ and $v_1\in I(A).$ Then by induction we obtain $x_1\in I(A)^n$ for any $n.$ Since $A$ is residually nilpotent, this implies $x_1=0.$ This is a contradiction.
\end{proof}

\begin{theorem}
The algebra $A$ is a finitely generated parafree algebra of infinite cohomological dimension.
\end{theorem}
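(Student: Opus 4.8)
The plan is to assemble the three assertions of the statement---parafreeness, finite generation, and infinite cohomological dimension---from the material already established above, since the substantive work has been carried out in the construction of the resolution $P_\bullet$ and in \Cref{lemma_ext}.

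First I would dispose of parafreeness and finite generation. By its very definition $A = {\sf Para}(x_1x_2+y_1^2,\, x_2x_1+y_2^2)$ is a standard parafree algebra, namely the quotient of $\Bbbk\langle x_1,x_2\rangle_{\mathcal P}$ by $I^\omega$ for the family $\mathcal P = (x_1x_2+y_1^2,\, x_2x_1+y_2^2)$ of acyclic $\Bbbk\langle x_1,x_2\rangle$-polynomials; standard parafree algebras are parafree, with $x_1,x_2$ serving as a set of parafree generators. Since the set $Y=\{y_1,y_2\}$ indexing $\mathcal P$ is finite, $A$ is finitely generated; concretely, the preceding lemma exhibits the finite presentation $A = \Bbbk\langle x_1,x_2,y_1,y_2 \mid r_1,r_2,r_3,r_4\rangle$.

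It remains to show that the cohomological dimension of $A$ is infinite. Recall that this dimension is, by definition, the infimum of those $n$ for which the functor $H^{n+1}(A,-) = {\sf Ext}^{n+1}_A(\Bbbk,-)$ vanishes identically. By \Cref{lemma_ext}, for every $n\geq 0$ we have $H^{3+2n}(A,I(A)) \neq 0$. Consequently, given any integer $m$, there is an odd degree $3+2n \geq m$ in which $H^{3+2n}(A,-)$ does not vanish on the coefficient module $I(A)$; hence no functor $H^{m}(A,-)$ is identically zero. Therefore the set of $n$ with $H^{n+1}(A,-)=0$ is empty, and the cohomological dimension of $A$ is infinite.

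There is no real obstacle left at this stage---the one point worth stating explicitly is that the cohomological dimension is governed by vanishing of $H^{n+1}(A,-)$ over \emph{all} coefficient modules at once, so a single fixed module $I(A)$ witnessing nonvanishing in an unbounded set of degrees is already enough to force infinite cohomological dimension. All genuine difficulty (producing the eventually $2$-periodic projective resolution, verifying $d_id_{i+1}=0$ and the contracting homotopy via \Cref{lemma_eqations_for_c}, and the residual-nilpotency input needed in \Cref{lemma_ext}) has been handled before the statement, and the theorem is a direct corollary.
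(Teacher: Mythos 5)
Your proposal is correct and follows essentially the same route as the paper, whose proof is simply that the theorem follows from \Cref{lemma_ext} and the definition of $A$ as a finitely generated standard parafree algebra; you merely spell out the (standard) dimension-shifting point that nonvanishing of $H^{3+2n}(A,I(A))$ in unboundedly many degrees rules out any $n$ with $H^{n+1}(A,-)=0$.
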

\begin{proof} 
This follows from  \Cref{lemma_ext} and the definition of $A$.
\end{proof}

\end{document}